\newtheorem{theorem}{Theorem}
\newtheorem{lemma}[theorem]{Lemma}
\newtheorem{definition}[theorem]{Definition}
\newtheorem{proposition}[theorem]{Proposition}
\newtheorem{corollary}[theorem]{Corollary}
\newtheorem{remark}[theorem]{Remark}
\DeclareMathAlphabet{\bi}{OML}{cmm}{b}{it}
\DeclareMathAlphabet\bfcal{OMS}{cmsy}{b}{n} 
\newcommand{\Imag}{\operatorname{Im}}
\newcommand{\Real}{\operatorname{Re}}
\newcommand{\bE}{\boldsymbol{E}}
\newcommand{\bH}{\boldsymbol{H}}
\newcommand{\bF}{\boldsymbol{F}}
\newcommand{\bG}{\boldsymbol{G}}
\newcommand{\bJ}{\boldsymbol{J}}
\newcommand{\bK}{\boldsymbol{K}}
\newcommand{\bx}{\boldsymbol{x}}
\newcommand{\bU}{\boldsymbol{U}}
\newcommand{\bV}{\boldsymbol{V}}
\newcommand{\bu}{\boldsymbol{u}}
\newcommand{\calD}{\mathcal{D}}
\newcommand{\calF}{\mathcal{F}}
\newcommand{\calH}{\mathcal{H}}
\newcommand{\calO}{\mathcal{O}}
\newcommand{\calW}{\mathcal{W}}
\newcommand{\calZ}{\mathcal{Z}}
\newcommand{\bbA}{\mathbb{A}}
\newcommand{\bbB}{\mathbb{B}}
\newcommand{\bbC}{\mathbb{C}}
\newcommand{\bbD}{\mathbb{D}}
\newcommand{\bbE}{\mathbb{E}}
\newcommand{\bbF}{\mathbb{F}}
\newcommand{\bbH}{\mathbb{H}}
\newcommand{\bbJ}{\mathbb{J}}
\newcommand{\bbK}{\mathbb{K}}
\newcommand{\bbM}{\mathbb{M}}
\newcommand{\bbP}{\mathbb{P}}
\newcommand{\bbR}{\mathbb{R}}
\newcommand{\bbS}{\mathbb{S}}
\newcommand{\bbT}{\mathbb{T}}
\newcommand{\bbV}{\mathbb{V}}
\newcommand{\bbW}{\mathbb{W}}
\newcommand{\fE}{f_E}
\newcommand{\bfH}{\boldsymbol{f_H}}
\newcommand{\fJ}{f_J}
\newcommand{\bfK}{\boldsymbol{f_K}}
\newcommand{\Omegam}{\Omega_{\rm m}}
\newcommand{\Omegae}{\Omega_{\rm e}}
\newcommand{\bhatU}{\boldsymbol{\hat{U}}}
\newcommand{\curlvec}{\operatorname{\bf Curl}}
\newcommand{\curl}{\operatorname{curl}}
\newcommand{\bcurl}{\operatorname{\bf curl}}
\newcommand{\curlk}{\operatorname{curl_{\mathit k}}}
\newcommand{\bcurlk}{\operatorname{{\bf curl}_{\mathit k}}}
\newcommand{\nablak}{\operatorname{\boldsymbol{\nabla}_{\!\mathit k}}}
\def\div{\operatorname{div}}
\newcommand{\divk}{\operatorname{{div}_{\mathit k}}}
\newcommand{\sgn}{\operatorname{sgn}}
\def\ker{\mathrm{Ker}}
\newcommand{\bbAk}{\mathbb{A}_k}
\newcommand{\hatH}{\boldsymbol{\hat{\calH}}}
\newcommand{\calAk}{\mathcal{A}_k}
\newcommand{\gzk}{g_{k,\zeta}}
\newcommand{\glk}{g_{k,\lambda}}
\newcommand{\Thetalk}{\Theta_{k,\lambda}}
\newcommand{\Thetazk}{\Theta_{k,\zeta}}
\newcommand{\thetazk}{\theta_{k,\zeta}}
\newcommand{\thetalk}{\theta_{k,\lambda}}
\def\szk{\mathrm{s}_{k,\zeta}}
\def\czk{\mathrm{c}_{k,\zeta}}
\def\slk{\mathrm{s}_{k,\lambda}}
\def\clk{\mathrm{c}_{k,\lambda}}
\def\psim{\psi_{k,\zeta,-1}}
\def\psip{\psi_{k,\zeta,+1}}
\def\psilm{\psi_{k,\lambda,-1}}
\def\psilp{\psi_{k,\lambda,+1}}
\def\psipm{\psi_{k,\zeta,\pm 1}}
\def\psilpm{\psi_{k,\lambda,\pm 1}}
\def\wlkj{\boldsymbol{W}_{\!\! k,\lambda,j}}
\def\wlkz{\boldsymbol{W}_{\!\! k,\lambda,0}}
\def\bw{\boldsymbol{W}}
\def\Rop{\mathrm{R}}
\def\bR{\boldsymbol{\rm{R}}}
\newcommand{\bPi}{\boldsymbol{\Pi}}
\newcommand{\eps}{\varepsilon}
\newcommand{\rmd}{{\mathrm{d}}}
\newcommand{\rmD}{{\mathrm{D}}}
\newcommand{\rme}{{\rm e}}
\newcommand{\rmi}{{\rm i}}
\newcommand{\indHxy}{{\rm\scriptscriptstyle 2D}}
\newcommand{\indHx}{{\rm\scriptscriptstyle 1D}}
\newcommand{\Hxy}{\boldsymbol{\mathcal{H}}_\indHxy}
\newcommand{\Hxk}{\boldsymbol{\mathcal{H}}_{\indHxy}^{\oplus}}
\newcommand{\Hx}{\boldsymbol{\mathcal{H}}_\indHx}
\newcommand{\Hxdiv}{\boldsymbol{\mathcal{H}}_\indHx (\divk 0)}
\newcommand{\Hxydiv}{\boldsymbol{\mathcal{H}}_\indHxy (\div 0)}
\newcommand{\Hxs}{\boldsymbol{\mathcal{H}}_{\indHx,s}}
\newcommand{\Hxps}{\boldsymbol{\mathcal{H}}_{\indHx,+s}}
\newcommand{\Hxms}{\boldsymbol{\mathcal{H}}_{\indHx,-s}}
\newcommand{\Hxys}{\boldsymbol{\mathcal{H}}_{\indHxy,s}}
\newcommand{\Hxyps}{\boldsymbol{\mathcal{H}}_{\indHxy,+s}}
\newcommand{\Hxyms}{\boldsymbol{\mathcal{H}}_{\indHxy,-s}}
\newcommand{\Dx}{\boldsymbol{\mathcal{D}}_{\!\indHx}}
\def\scD{\textsc{d}}
\def\scE{\textsc{e}}
\def\scI{\textsc{i}}
\def\scZ{\textsc{z}}
\def\DD{\textsc{dd}}
\def\DE{\textsc{de}}
\def\EI{\textsc{ei}}
\def\DI{\textsc{di}}
\def\EE{\textsc{ee}}
\def\zDD{\Lambda_\DD}
\def\zDE{\Lambda_\DE}
\def\zEI{\Lambda_\EI}
\def\zDI{\Lambda_\DI}
\def\zEE{\Lambda_\EE}
\newcommand{\bbQ}{\mathbb{Q}}
\begin{document}
\title{Spectral theory for Maxwell's equations at the interface of a metamaterial. Part I: Generalized Fourier transform.}
\author{Maxence Cassier$^{a,b}$, Christophe Hazard$^{b}$ and Patrick Joly$^{b}$\\ \ \\
{\footnotesize $^a$ Department of Mathematics of the University of Utah, Salt Lake City, UT, 84112, United States}\\ 
{\footnotesize $^b$ ENSTA / POEMS$^1$, 32 Boulevard Victor, 75015 Paris, France}\\ 
{\footnotesize (cassier@math.utah.edu, christophe.hazard@ensta-paristech.fr, patrick.joly@inria.fr)}}
\footnotetext[1]{POEMS (Propagation d'Ondes: Etude Math\'ematique et Simulation) is a mixed research team (UMR 7231) between CNRS (Centre National de la Recherche Scientifique), ENSTA ParisTech (Ecole Nationale Sup\'erieure de Techniques Avanc\'ees) and INRIA (Institut National de Recherche en Informatique et en Automatique).}

\maketitle

\begin{abstract}
We explore the spectral properties of the time-dependent Maxwell's equations for a plane interface between a metamaterial represented by the Drude model and the 
vacuum, which fill respectively complementary half-spaces. We construct 
explicitly a generalized Fourier transform which diagonalizes the 
Hamiltonian that describes the propagation of transverse electric waves. 
This transform appears as an operator of decomposition on a family of 
generalized eigenfunctions of the problem. It will be used in a 
forthcoming paper to prove both limiting absorption and 
limiting amplitude principles.
\end{abstract}

{\noindent \bf Keywords:} Negative Index Materials (NIMs), Drude model, Maxwell 
equations, Generalized eigenfunctions.

%XXXXXXXXXXXXXXXXXXXXXXXXXXXXXXXXXXXXXXXXXXXXXXXXXXXXXXXXXXXXXXXXXXXXXXXXXXXXXXXXXXXXXXXXXXXXXXXXXXXXXXXXXXXXXXXXXXXXXXXXXXXXXXX
%XXXXXXXXXXXXXXXXXXXXXXXXXXXXXXXXXXXXXXXXXXXXXXXXXXXXXXXXXXXXXXXXXXXXXXXXXXXXXXXXXXXXXXXXXXXXXXXXXXXXXXXXXXXXXXXXXXXXXXXXXXXXXXX
\section{Introduction}
\label{s-intro}
In the last years, metamaterials have generated a huge interest among communities of physicists and mathematicians, owing to their extraordinary properties such as negative refraction \cite{Ves-68}, allowing the design of spectacular devices like the perfect flat lens \cite{Pen-00} or the cylindrical cloak in \cite{Nir-26}. Such properties result from the possibility of creating artificially microscopic structures whose macroscopic electromagnetic behavior amounts to negative electric permittivity $\eps$ and/or negative magnetic permeability $\mu$ within some frequency range. Such a phenomenon can also be observed in metals in the optical frequency range \cite{Jac-98,Mai-07}: in this case one says that this material is \emph{a negative material} \cite{Gra-10}. Thanks to these negative electromagnetic coefficients, waves can propagate at the interface between such a negative material and a usual dielectric material \cite{Gra-12}. These waves, often called \emph{surface plasmon polaritons}, are localized near the interface and allow then to propagate signals in the same way as in an optical fiber, which may lead to numerous physical applications. Mathematicians have so far little explored these negative materials and most studies in this context are devoted to the frequency domain, that is, propagation of time-harmonic waves \cite{Bon-14,Bon-14(2),Ngu-16}. In particular, it is now well understood that in the case of a smooth interface between a dielectric and a negative material (both assumed non-dissipative), the time-harmonic Maxwell's equations become ill-posed if both ratios of $\eps$ and $\mu$ across the interface are equal to $-1,$ which is precisely the conditions required for the perfect lens in \cite{Pen-00}. This result raises a fundamental issue which can be seen as the starting point of the present paper.

Indeed, for numerous scattering problems, a time-harmonic wave represents the large time asymptotic behavior of a time-dependent wave resulting from a time-harmonic excitation which is switched on at an initial time. Such a property is referred to as the \emph{limiting amplitude principle} in the context of scattering theory. It has been proved for a large class of physical problems in acoustics, electromagnetism or elastodynamics \cite{Eid-65,Eid-69,Mor-89,Rad-15,Roa-92,San-89}. But what can be said about the large time behavior of the time-dependent wave if the frequency of the excitation is such that the time-harmonic problem becomes ill-posed, that is, precisely in the situation described above? What is the effect of the surface plasmons on the large time behavior? Our aim is to give a precise answer to these questions in an elementary situation.

To reach this goal, several approaches are possible. The one we adopt here, which is based on spectral theory, has its own interest because it provides us a very powerful tool to represent time-dependent waves and study their behavior, not only for large time asymptotics. Our aim is to make the spectral analysis of a simple model of interface between a negative material and the vacuum, more precisely to construct a \emph{generalized Fourier transform} for this model, which is the keystone for a time--frequency analysis. Indeed this transform amounts to a \emph{generalized eigenfunction expansion} of any possible state of the system, which yields a representation of time-dependent waves as superpositions of time-harmonic waves. From a mathematical point of view, this transform offers a \emph{diagonal} form of the operator that describes the dynamics of the system. The existence of such a transform is ensured in a very general context \cite{Ber-She-Us-96,Haz-07}, but its practical construction highly depends of the considered model.

The situation studied here consists in the basic case of a plane interface between the vacuum and a negative material filling respectively two half-spaces. Our negative material is described by a non-dissipative Drude model, which is the simplest model of negative material. The technique we use to construct the generalized Fourier transform is inspired by previous studies in the context of stratified media \cite{Der-86,Gui-89,Wed-91,Wil-84}. Compared to these studies, the difficulty of the present work relies in the fact that in the Drude material, $\eps$ and $\mu$ depend on the frequency and become negative for low frequencies. For the sake of simplicity, instead of considering the complete three-dimensional physical problem, we restrict ourselves to the so-called transverse electric (TE) two-dimensional problem, \textit{i.e.}, when the electric field is orthogonal to the plane of propagation. The transverse magnetic (TM) case can be studied similarly. As shown in \cite{Wed-91}, in stratified media, the spectral theory of the three-dimensional problem follows from both TE and TM cases, but is not dealt with here. 

The present paper is devoted to the construction of the generalized Fourier transform of the TE Maxwell's equations. It will be used in a forthcoming paper \cite{Cas-Haz-Jol-Vin} to study the validity of a limiting amplitude principle in our medium, but the results we obtain in the present paper are not limited to this purpose. The generalized Fourier transform is also the main tool to study scattering problems as in \cite{Der-88,Haz-07,Wed-91,Wilc-84,Wil-84}, numerical methods in stratified media as in \cite{Gui-89} and has many other applications. In Let us mention that both present and forthcoming papers are an advanced version of the preliminary study presented in \cite{Cas-14}.

The paper is organized as follows. In \S\ref{s-mod-meth}, we introduce the above mentioned plane interface problem between a Drude material and the vacuum, more precisely the TE Maxwell's equations. These equations are formulated as a conservative Schr\"{o}dinger equation in a Hilbert space, which involves a self-adjoint Hamiltonian. We briefly recall some basic notions of spectral theory which are used throughout the paper. Section \ref{s-Spec-th-Ak} is the core of our study: we take advantage of the invariance of our medium in the direction of the interface to reduce the spectral analysis of our Hamiltonian to the analysis of a family of one-dimensional \emph{reduced Hamiltonians}. We diagonalize each of them by constructing a adapted generalized Fourier transform. We finally bring together in \S\ref{s-Spec-th-A} this family of results to construct a generalized Fourier transform for our initial Hamiltonian and conclude by a spectral representation of the solution to our Schr\"{o}dinger equation. 

%XXXXXXXXXXXXXXXXXXXXXXXXXXXXXXXXXXXXXXXXXXXXXXXXXXXXXXXXXXXXXXXXXXXXXXXXXXXXXXXXXXXXXXXXXXXXXXXXXXXXXXXXXXXXXXXXXXXXXXXXXXXXXXX
%XXXXXXXXXXXXXXXXXXXXXXXXXXXXXXXXXXXXXXXXXXXXXXXXXXXXXXXXXXXXXXXXXXXXXXXXXXXXXXXXXXXXXXXXXXXXXXXXXXXXXXXXXXXXXXXXXXXXXXXXXXXXXXX
\section{Model and method}
\label{s-mod-meth}

% ===============================================================================================================================
\subsection{The Drude model}
We consider a metamaterial filling the half-space $\bbR^3_{+}:=\{\bx := (x,y,z) \in \bbR^3\mid x>0\}$ and whose behavior is described by a Drude model (see, \textit{e.g.}, \cite{Li-13}) recalled below. The complementary half-space $\bbR^3_{-}:= \bbR^2 \times \bbR_-$ is composed of vacuum (see Figure \ref{fig.med}). The triplet $(\boldsymbol{e_x},\boldsymbol{e_y},\boldsymbol{e_z})$ stands for the canonical basis of $\bbR^3$.

We denote respectively by $\bbD$ and $\bbB$ the electric and magnetic inductions, by  $\bbE $ and $\bbH$ the electric and magnetic fields.
We assume that in the presence of a source current density $\bbJ_s$, the evolution of  $(\bbE,  \bbD, \bbH, \bbB)$ in the whole space is governed by the macroscopic Maxwell's equations
(in the following, the notation $\curlvec$ refers to the usual $3$D curl operator)\begin{equation*}
   \partial_t \bbD -\curlvec \bbH = -\bbJ_s \quad\mbox{and}\quad \partial_t \bbB +\curlvec \bbE = 0,
\end{equation*}
which must be supplemented by the constitutive laws of the material
\begin{equation*}
   \bbD= \eps_0 \bbE+  \, \bbP \quad\mbox{and}\quad \bbB= \mu_0 \bbH+ \bbM
\end{equation*}
involving two additional unknowns, the electric and magnetic polarizations $\bbP$ and $\bbM$. The positive constants $\eps_0$ and $\mu_0$ stand respectively for the permittivity and the permeability of the vacuum. 

In the vacuum, $\bbP = \bbM = 0$ so that Maxwell's equations become
\begin{equation}\label{eq.maxwelldiel}
\eps_0 \:\partial_t \bbE -\curlvec \bbH = -\bbJ_s \quad\mbox{and}\quad \mu_0 \:\partial_t \bbH +\curlvec \bbE =0 \quad\mbox {in } \bbR^3_-.
\end{equation}
On the other hand, for a homogeneous non-dissipative Drude material, the fields $\bbP$ and $\bbM$ are related to $\bbE $ and $\bbH$ through 
\begin{equation*}
\partial_t \bbP = \bbJ, \quad \partial_t \bbJ=\eps_0 \, \Omegae^2\, \bbE  \  \ \mbox{ and }  \  \  \partial_t \bbM = \bbK,  \quad \partial_t \bbK = \mu_0 \, \Omegam^2\,\bbH ,
\end{equation*}
where the two unknowns $\bbJ$ and $\bbK$ are called usually the induced electric and magnetic currents. Both parameters $\Omegae$ and $\Omegam$ are positive constants which characterize the behavior of a Drude material. We can eliminate $\bbD$, $\bbB$, $\bbP$ and $\bbM,$ which yields the time-dependent Maxwell equations in a Drude material:
\begin{equation}\label{eq.maxwelldr}
	\left\{ \begin{array}{ll}
\eps_0 \:\partial_t \bbE -\curlvec \bbH + \bbJ = -\bbJ_s & \quad \partial_t \bbJ=\eps_0 \, \Omegae^2\, \bbE \\[12pt]
 \mu_0 \:\partial_t \bbH +\curlvec \bbE +\bbK=0 & \quad \partial_t \bbK = \mu_0 \, \Omegam^2\,\bbH
\end{array} \right.
\quad\mbox{ in } \bbR^3_+.
\end{equation}
The above equations in $\bbR^3_-$ and $\bbR^3_+$ must be supplemented by the usual transmission conditions
\begin{equation}\label{eq.transmission}
[\boldsymbol{e_x} \times \bbE]_{x=0}=0 \quad\mbox{and}\quad [\boldsymbol{e_x} \times \bbH]_{x=0}=0,
\end{equation}
which express the continuity of the tangential electric and magnetic fields through the interface $x =0$ (the notation $[f]_{x=0}$ designates the gap of a quantity $f$ across $x=0,$ \textit{i.e.}, $\lim_{x \searrow 0} \{f(+x) - f(-x)\}).$
\begin{figure}[!t]
\centering
 \includegraphics[width=0.45\textwidth]{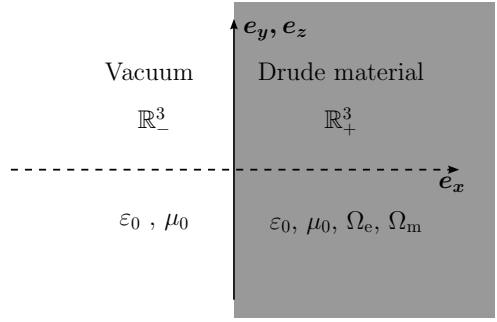}
 \caption{Description of the transmission problem}
 \label{fig.med}
\end{figure}

When looking for time-harmonic solutions to these equations for a given (circular) frequency $\omega \in \bbR,$ \textit{i.e.},
$$(\bbE(\bx,t),\bbH(\bx,t), \bbJ(\bx,t),\bbK(\bx,t)) =(\bbE_{\omega}(\bx),  \bbH_{\omega}(\bx) ,  \bbJ_{\omega}(\bx),  \bbK_{\omega}(\bx))   \  \rme^{-\rmi\omega t},$$   
for a periodic current density $\bbJ_s(\bx,t)=\bbJ_{s,\omega}(\bx)\, \rme^{-\rmi\omega t} $, we can eliminate $\bbJ_{\omega}$ and $\bbK_{\omega}$ and obtain the following time-harmonic Maxwell equations:
\begin{equation*}
\rmi\omega\, \eps_\omega(\bx) \, \bbE_{\omega} + \curlvec \bbH_{\omega}= \bbJ_{s,\omega} 
\quad\mbox{and}\quad 
-\rmi\omega\, \mu_\omega(\bx) \,\bbH_{\omega} + \curlvec\bbE_{\omega} = 0
\quad\mbox{in }\bbR^3, 
\end{equation*}
where $\eps_\omega(\bx) = \eps_0$ and $\mu_\omega(\bx) = \mu_0$ if $\bx \in \bbR^3_-,$ whereas
\begin{equation}\label{eq.drude}
\eps_\omega(\bx) = \eps_\omega^+ := \eps_0 \left(1-\frac{\Omegae^2}{\omega^2}\right) 
\quad\mbox{and}\quad
\mu_\omega(\bx) = \mu_\omega^+ := \mu_0 \left(1-\frac{\Omegam^2}{\omega^2}\right)
\quad\mbox{if } \bx \in \bbR^3_+.
\end{equation}
Functions $\eps_\omega^+$ and $\mu_\omega^+$ define the frequency-dependent electric permittivity and magnetic permeability of a Drude material (see Figure \ref{fig.drude}). Several observations can be made. First notice that one recovers the permittivity and the permeability of the vacuum if $\Omegae=\Omegam=0.$ Then, a Drude material behaves like the vacuum for high frequencies (since $\lim_{|\omega| \to \infty} \eps_\omega^+=\eps_0$ and $\lim_{|\omega| \to \infty} \mu_\omega^+=\mu_0),$ whereas for low frequencies, it becomes a \emph{negative material} in the sense that
\begin{equation*}
\eps_\omega^+<0 \mbox{ for } |\omega| \in (0,\Omegae) \quad\mbox{and}\quad \mu_\omega^+<0 \mbox{ for } |\omega| \in (0,\Omegam).
\end{equation*}
Note that if $\Omegae\neq\Omegam$, there is a frequency gap $\big(\min(\Omegae, \Omegam), \max(\Omegae, \Omegam)\big)$ of width $|\Omegae -\Omegam|$ where  $\eps_\omega^+$ and $\mu_\omega^+$ have opposite signs. At these frequencies, waves cannot propagate through the material: by this we means that corresponding plane waves are necessarily evanescent, in other words associated to non real wave vectors. It is precisely what happens in metals at optical frequencies \cite{Jac-98}. Finally there exists a unique frequency for which the relative permittivity $\eps_\omega^+ / \eps_0$ (respectively the relative permeability  $\mu_\omega^+ / \mu_0$) is equal to $-1$:
\begin{equation*}
\frac{\eps_\omega^+}{\eps_0} = -1  \mbox{ if } |\omega| = \frac{\Omegae}{\sqrt{2}}
\quad\mbox{and}\quad
\frac{\mu_\omega^+}{\mu_0} = -1  \mbox{ if } |\omega| = \frac{\Omegam}{\sqrt{2}}.
\end{equation*}
Note that both ratios can be simultaneously equal to $-1$ at the same frequency if and only if $\Omegae = \Omegam.$ 
\begin{figure}[!t]
\centering
\includegraphics[width=0.35\textwidth]{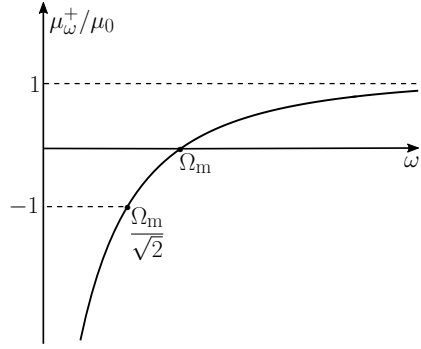}
\caption{ Relative permeability $\mu_\omega^+/\mu_0$ as a function of the frequency $\omega$.}
\label{fig.drude}
\end{figure}

\begin{remark}
In the physical literature, the Drude model {\rm (\ref{eq.drude})} consists in a simple but useful approximation of a metamaterial's behavior {\rm \cite{Pen-00,Ves-68}}. But one can find more intricate models to express the frequency dependency of  $\eps_\omega^+$ and  $\mu_\omega^+$ in the time-harmonic Maxwell's equations, for instance, the Lorentz model {\rm \cite{Gra-10,Gra-12}}:
\begin{equation*}
\eps_\omega^+ = \eps_0 \left(1 - \frac{\Omegae^2}{\omega^2-\omega_{\rm e}^2}\right)
\quad\mbox{and}\quad
\mu_\omega^+ = \mu_0 \left(1 - \frac{\Omegam^2}{\omega^2-\omega_{\rm m}^2}\right)
\end{equation*}
where $\omega_{\rm e}$ and $\omega_{\rm m}$ are non negative parameters. For generalized Lorentz materials {\rm \cite{Tip-04}}, functions $\eps_\omega^+$ and $\mu_\omega^+$ are defined by finite sums of similar terms for various poles $\omega_{\rm e}$ and $\omega_{\rm m}$.
\end{remark}

% ===============================================================================================================================
\subsection{A two-dimensional transmission problem}

As mentioned in \S\ref{s-intro}, in this paper, we restrict ourselves to the study of the so-called transverse electric (TE) equations which result from equations (\ref{eq.maxwelldiel}), (\ref{eq.maxwelldr}) and (\ref{eq.transmission}) by assuming that $\bbJ_s (x,y,z,t) = J_s(x,y,t) \; \boldsymbol{e_z}$ and searching for solutions independent of $z$ in the form
\begin{eqnarray*}
\bbE (x,y,z,t) = E(x,y,t) \; \boldsymbol{e_z} &\mbox{ and } & \bbH (x,y,z,t) = \big(H_x(x,y,t), H_y(x,y,t),0 \big)^{\top},\\
\bbJ (x,y,z,t) = J(x,y,t) \; \boldsymbol{e_z} &\mbox{ and } & \bbK (x,y,z,t) = \big(K_x(x,y,t), K_y(x,y,t),0 \big)^{\top}.
\end{eqnarray*}
Setting $\bH := (H_x,H_y)^{\top}$ and $\bK := (K_x,K_y)^{\top},$ we obtain a two-dimensional problem for the unknowns $(E,\bH, J,\bK ),$ which will be written in the following concise form:
\begin{equation}\label{TE}
\left\{ \begin{array}{ll}
  \eps_0 \:\partial_t E -\curl \bH + \Pi \, J = -J_s & \mbox{in }  \bbR^2, \\[5pt]
  \mu_0\: \partial_t \bH + \bcurl E + \bPi\, \bK= 0 & \mbox{in } \bbR^2,\\[5pt]
  \partial_t J = \eps_0 \Omegae^2 \, \Rop \, E & \mbox{in } \bbR_+^2, \\[5pt]
	\partial_t \bK= \mu_0 \Omegam^2 \, \bR \, \bH & \mbox{in } \bbR_+^2,
\end{array} \right.
\end{equation}
where we have used the 2D curl operators of scalar and vector fields respectively:
$$
\bcurl u := (\partial_y u, -\partial_x u)^{\top}
\quad\mbox{and}\quad 
\curl \bu := \partial_x u_y -\partial_y u_x \mbox{ where } \bu=(u_x,u_y)^{\top}.
$$
Moreover, $\Pi$ (respectively, $\bPi$) denotes the extension by $0$ of a scalar function (respectively, a 2D vectorial field)  defined on $\bbR^2_+$ to the whole space $\bbR^2$, whereas $\Rop$ (respectively, $\bR$) stands for the restriction to  $\bbR^2_+$ of a function defined on the whole plane $\bbR^2.$ Note that in (\ref{TE}) where equations are understood in the sense of distributions, we assume implicitly that the two-dimensional version of the transmission conditions (\ref{eq.transmission}) are satisfied, namely
\begin{equation}\label{eq.transTE}
[E]_{x=0}=0 \quad\mbox{and}\quad [H_{y}]_{x=0}=0.
\end{equation}

The theoretical study of (\ref{TE}) is based on a reformulation of this system as a Schr\"{o}dinger equation
\begin{equation}\label{eq.schro}
\frac{\rmd \, \bU}{\rmd\, t} + \rmi\, \bbA \, \bU=\bG,
\end{equation}
where the \emph{Hamiltonian} $\bbA$ is an unbounded operator on the Hilbert space 
\begin{equation}\label{eq.defHxy}
\Hxy := L^2(\bbR^2) \times L^2(\bbR^2)^2 \times L^2(\bbR^2_+) \times L^2(\bbR^2_+)^2.\end{equation}
We assume that this space is equipped with the inner product defined for all $\bU:=(E, \bH, J, \bK)^{\top}$ and $\bU':=(E^{\prime}, \bH^{\prime}, J^{\prime}, \bK^{\prime})^{\top}$ by
\begin{equation}
(\bU,\bU')_{\indHxy} := \eps_0\ (E,E^{\prime})_{\bbR^2} + \mu_0\ (\bH,\bH^{\prime})_{\bbR^2} + \eps_0^{-1} \Omegae^{-2}\ (J,J^{\prime})_{\bbR^2_+} + \mu_0^{-1} \Omegam^{-2}\ (\bK,\bK^{\prime})_{\bbR^2_+},
\label{eq.inn-prod-2D}
\end{equation}
where $(u,v)_{\calO} := \int_{\calO} u \cdot \bar{v} \, \rmd x \rmd y$ denotes the usual $L^2$ inner product, with $\calO =\bbR^2$ or $\bbR_+^2.$ We easily verify that (\ref{TE}) writes as the Schr\"{o}dinger equation (\ref{eq.schro}) with $\bG := (- \eps_0^{-1} \, J_{s} \,,0\,, \,0 ,\, 0)^{\top}$ if we choose for $\bbA$ the operator defined by
\begin{equation}\label{eq.defA}
\bbA \, \bU := {\cal A} \, \bU \quad\forall \; \bU \in \rmD(\bbA) := H^{1}(\bbR^2) \times \bH_{\!\curl}(\bbR^2) \times  L^2(\bbR^2_+) \times L^2(\bbR^2_+)^2 \subset \Hxy,
\end{equation} 
where $\bH_{\!\curl}(\bbR^2) := \{ \bu\in  L^2(\bbR^2)^2 \mid \curl \bu \in L^2(\bbR^2)\}$ and ${\cal A}$ is the following matrix differential operator (all derivatives are understood in the distributional sense):
\begin{equation}\label{eq.opA}
{\cal A} := \ \rmi\, \begin{pmatrix}
0 &\eps_0^{-1}\,\curl & -\eps_0^{-1} \, \Pi & 0\\
- \mu_0^{-1}\,\bcurl& 0 &0 & - \mu_0^{-1} \,\bPi \\
\eps_0 \Omegae^2 \, \Rop & 0 & 0 &0 \\
0 & \mu_0 \Omegam^2\, \bR & 0 & 0
\end{pmatrix}.
\end{equation}
Note that the transmission conditions (\ref{eq.transTE}) are satisfied as soon as $(E,\bH) \in H^1(\bbR^2) \times \bH_{\!\curl}(\bbR^2).$

\begin{proposition}\label{prop.autoadjoint}
The operator $\bbA: \rmD(\bbA)\subset\Hxy \longmapsto \Hxy$ is self-adjoint.
\end{proposition}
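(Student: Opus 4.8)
The plan is to establish two facts: that $\bbA$ is symmetric, and that the domain of its adjoint is no larger than $\rmD(\bbA)$. For symmetry I would take $\bU=(E,\bH,J,\bK)^{\top}$ and $\bU'=(E',\bH',J',\bK')^{\top}$ in $\rmD(\bbA)$ and expand $(\bbA\,\bU,\bU')_{\indHxy}$ component by component. The computation rests on two elementary adjunction identities. First, $\curl$ and $\bcurl$ are formally adjoint on the whole plane, $(\curl\bu,v)_{\bbR^2}=(\bu,\bcurl v)_{\bbR^2}$, with no boundary term precisely because $v\in H^1(\bbR^2)$ and $\bu\in\bH_{\!\curl}(\bbR^2)$ are globally defined Sobolev fields. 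Second, the extension-by-zero $\Pi$ and the restriction $\Rop$ are mutually adjoint, $(\Pi f,g)_{\bbR^2}=(f,\Rop g)_{\bbR^2_+}$, with the analogous identities for $\bPi$ and $\bR$. The weights $\eps_0,\mu_0,\eps_0^{-1}\Omegae^{-2},\mu_0^{-1}\Omegam^{-2}$ in the inner product are tuned exactly so that the constants multiplying the curl terms and the $\Pi/\Rop$ terms cancel; matching the resulting terms then yields $(\bbA\,\bU,\bU')_{\indHxy}=(\bU,\bbA\,\bU')_{\indHxy}$, i.e. $\bbA\subseteq\bbA^{*}$.

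The delicate part is upgrading symmetry to self-adjointness, i.e. establishing $\rmD(\bbA^{*})\subseteq\rmD(\bbA)$. Rather than characterize the adjoint domain directly, I would exploit the block structure and isolate the only unbounded contribution. Write $\bbA=\bbA_0+\mathcal{B}$, where $\bbA_0$ keeps only the two curl entries (acting on $(E,\bH)$) and $\mathcal{B}$ collects the four coupling terms built from $\Pi,\bPi,\Rop,\bR$ together with their constants. Since $\Pi,\bPi,\Rop,\bR$ are bounded between the relevant $L^2$ spaces, $\mathcal{B}$ is a bounded operator on $\Hxy$, and the portion of the symmetry computation involving only these terms shows it is self-adjoint. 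By the Kato--Rellich theorem it then suffices to prove that $\bbA_0$, with domain $\rmD(\bbA)$, is self-adjoint, since a bounded self-adjoint perturbation preserves both the domain and self-adjointness.

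Finally, $\bbA_0$ is the orthogonal direct sum, with respect to the inner product (the $L^2(\bbR^2)$ and $L^2(\bbR^2_+)$ factors do not mix), of the zero operator on $L^2(\bbR^2_+)\times L^2(\bbR^2_+)^2$ and the two-dimensional Maxwell operator $M(E,\bH)^{\top}:=\rmi\,(\eps_0^{-1}\curl\bH,\,-\mu_0^{-1}\bcurl E)^{\top}$ on $L^2(\bbR^2)\times L^2(\bbR^2)^2$ with domain $H^1(\bbR^2)\times\bH_{\!\curl}(\bbR^2)$. I would prove self-adjointness of $M$ via the Fourier transform: $M$ has constant coefficients, so it becomes multiplication by the matrix symbol read off from $\widehat{\curl\bH}=\rmi(\xi_x\widehat{H}_y-\xi_y\widehat{H}_x)$ and $\widehat{\bcurl E}=\rmi(\xi_y\widehat{E},-\xi_x\widehat{E})^{\top}$, which is Hermitian with respect to the weighted pointwise inner product $\eps_0\,\cdot+\mu_0\,\cdot$. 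A constant-coefficient Fourier multiplier by a Hermitian symbol, taken on its maximal domain, is self-adjoint. The crux is precisely that $H^1(\bbR^2)\times\bH_{\!\curl}(\bbR^2)$ coincides with this maximal domain: the requirement $|\xi|\,\widehat{E}\in L^2$ is exactly $E\in H^1(\bbR^2)$, and the requirement that the curl-component of the symbol applied to $\widehat{\bH}$ be square-integrable is exactly $\bH\in\bH_{\!\curl}(\bbR^2)$. This is the step where self-adjointness genuinely exceeds symmetry, and the one where I expect the main (though essentially standard) technical care to be needed; once it is in place, the conclusion for $\bbA$ follows from the direct-sum structure and the Kato--Rellich argument above.
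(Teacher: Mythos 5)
Your proof is correct. The symmetry half coincides with the paper's argument: the paper likewise invokes the adjoint pairs $(\curl,\bcurl)$, $(\Rop,\Pi)$, $(\bR,\bPi)$ together with the weights in the inner product (\ref{eq.inn-prod-2D}). Where you genuinely add content is the second half: the paper disposes of the identity $\rmD(\bbA^*)=\rmD(\bbA)$ with the single sentence ``it is readily seen,'' whereas you supply an actual mechanism. Your route --- split off the coupling block $\mathcal{B}$ built from $\Pi,\bPi,\Rop,\bR$, observe it is bounded and symmetric hence self-adjoint on all of $\Hxy$, reduce by the trivial case of Kato--Rellich to the free operator $\bbA_0$, and identify the $(E,\bH)$-block of $\bbA_0$ as a constant-coefficient Fourier multiplier whose symbol is Hermitian for the weighted pointwise product $\eps_0\,\cdot+\mu_0\,\cdot$ and whose maximal domain is exactly $H^{1}(\bbR^2)\times\bH_{\!\curl}(\bbR^2)$ --- is sound at every step; in particular the maximal-domain identification ($\xi_x\widehat{E},\xi_y\widehat{E}\in L^2$ iff $E\in H^1$, and $\xi_x\widehat{H}_y-\xi_y\widehat{H}_x\in L^2$ iff $\curl\bH\in L^2$) is the point that makes the domain of the adjoint collapse onto $\rmD(\bbA)$ rather than something larger. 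What your approach buys is a self-contained, checkable proof resting only on two standard facts (self-adjointness of a maximal multiplication operator by a Hermitian symbol, and stability of self-adjointness under bounded symmetric perturbation); what it costs is length relative to the paper's one-line assertion. The decomposition into a free Maxwell part plus a bounded coupling is also consonant with how the paper later treats the reduced operators $\bbA_k$, so nothing in your argument is foreign to the structure of the problem.
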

\begin{proof}
The symmetry property $(\bbA\,\bU,\bU')_{\indHxy} = (\bU,\bbA\,\bU')_{\indHxy}$ for all $\bU,\bU' \in \rmD(\bbA)$ follows from our choice (\ref{eq.inn-prod-2D}) of an inner product and the fact that the operators of each of the pairs $(\curl,\bcurl),$ $(\Rop,\Pi)$ and $(\bR,\bPi)$ are adjoint to each other. Besides, it is readily seen that the domain of the adjoint of $\bbA$ coincide with $\rmD(\bbA).$
\end{proof}

By virtue of the Hille--Yosida theorem \cite{Paz-83}, Proposition \ref{prop.autoadjoint} implies that the Schr\"{o}dinger equation (\ref{eq.schro}) is well-posed, hence also the evolution system (\ref{TE}). More precisely, we have the following result.

\begin{corollary}\label{cor.Hil}
If $\bG \in C^{1}(\bbR^{+},\Hxy)$, then the Schr\"{o}dinger equation {\rm (\ref{eq.schro})} with zero initial condition $\bU(0) = 0$ 
admits a unique solution $\bU\in C^{1}(\bbR^{+},\Hxy)\cap C^{0}(\bbR^{+},\rmD(\bbA))$ given by the Duhamel integral formula:
\begin{equation}\label{eq.duhamel}
\bU(t)=\int_{0}^{t} \rme^{-\rmi \bbA\, (t-s)}\, \bG(s) \,\rmd s,\quad \forall t \geq 0, 
\end{equation}
where $(\rme^{-\rmi\bbA\, t})_{t\in \bbR}$ is the group of unitary operators generated by the self-adjoint operator $\bbA$.
\end{corollary}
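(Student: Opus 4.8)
The plan is to read Corollary \ref{cor.Hil} as a direct consequence of Proposition \ref{prop.autoadjoint} combined with the classical semigroup theory for inhomogeneous Cauchy problems, so that the argument splits naturally into a \emph{generation} step and a \emph{regularity} step. First I would establish the unitary group. Since $\bbA$ is self-adjoint by Proposition \ref{prop.autoadjoint}, the operator $-\rmi\bbA$ is skew-adjoint, and Stone's theorem — the specialization of the Hille--Yosida theorem \cite{Paz-83} to skew-adjoint generators — guarantees that $-\rmi\bbA$ generates a strongly continuous one-parameter group $(\rme^{-\rmi\bbA t})_{t\in\bbR}$ of unitary operators on $\Hxy$. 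In particular, for any $\bU_0\in\rmD(\bbA)$ the map $t\mapsto \rme^{-\rmi\bbA t}\,\bU_0$ lies in $C^1(\bbR,\Hxy)\cap C^0(\bbR,\rmD(\bbA))$ and solves the homogeneous equation $\rmd\bU/\rmd t+\rmi\bbA\,\bU=0$.

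Next I would incorporate the source term through Duhamel's formula. Defining $\bU(t)$ by (\ref{eq.duhamel}), the hypothesis $\bG\in C^1(\bbR^+,\Hxy)$ places us exactly in the setting of the standard regularity theorem for inhomogeneous evolution equations \cite{Paz-83}: when the forcing is continuously differentiable in time, the mild solution given by the Duhamel integral is in fact a classical one, so that $\bU\in C^1(\bbR^+,\Hxy)\cap C^0(\bbR^+,\rmD(\bbA))$ and satisfies both (\ref{eq.schro}) and the initial condition $\bU(0)=0$. Uniqueness is then immediate from the self-adjointness of $\bbA$: if $\bU_1,\bU_2$ are two such solutions, their difference $\boldsymbol{W}$ solves the homogeneous equation with $\boldsymbol{W}(0)=0$, and since $(\bbA\,\boldsymbol{W},\boldsymbol{W})_{\indHxy}$ is real one obtains $\frac{\rmd}{\rmd t}\|\boldsymbol{W}\|_{\indHxy}^2 = 2\,\Real(\boldsymbol{W}',\boldsymbol{W})_{\indHxy} = 2\,\Imag(\bbA\,\boldsymbol{W},\boldsymbol{W})_{\indHxy} = 0$, so that $\boldsymbol{W}\equiv 0$ by conservation of energy.

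The genuinely substantive point — everything else being either a citation or a one-line energy identity — is the passage from the mild to the classical solution, namely verifying that the Duhamel integral is differentiable in $t$ with values in $\Hxy$ and actually takes values in $\rmD(\bbA)$. This is precisely where the $C^1$ regularity of $\bG$ is essential: after the change of variable $s\mapsto t-s$ it permits differentiation under the integral sign, yielding the identity $\rmd\bU/\rmd t = \bG(t)-\rmi\bbA\,\bU(t)$ that simultaneously certifies $\bU(t)\in\rmD(\bbA)$ and recovers (\ref{eq.schro}). Since this regularity upgrade is exactly the content of the cited semigroup result, I would invoke it rather than reproduce the differentiation argument; the only thing left to check by hand is that the unitarity of the group makes the integrand a continuous $\Hxy$-valued function of $s$, which is automatic here.
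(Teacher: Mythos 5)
Your proposal is correct and follows the same route as the paper, which simply invokes the Hille--Yosida (Stone) theorem via Proposition \ref{prop.autoadjoint} together with the standard regularity theory for the inhomogeneous Cauchy problem with $C^1$ forcing; the paper gives no further detail. Your added uniqueness argument via the energy identity $\frac{\rmd}{\rmd t}\|\boldsymbol{W}\|_{\indHxy}^2 = 2\,\Imag(\bbA\,\boldsymbol{W},\boldsymbol{W})_{\indHxy} = 0$ is a correct and welcome explicit justification of a point the paper leaves to the cited reference.
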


As a consequence of the Duhamel formula (\ref{eq.duhamel}), we see that if $t \mapsto \|\bG(t)\|_{\indHxy}$ is bounded on $\bbR^+$ (for instance a time-harmonic source), then $\|\bU(t)\|_{\indHxy}$ increases at most linearly in time. More precisely, as $\rme^{-i \bbA\, (t-s)}$ is unitary, we have
\begin{equation}\label{eq.incrlint}
\|\bU(t)\|_{\indHxy} \leq t \ \sup_{s \in \bbR^+} \|\bG(s)\|_{\indHxy},\quad \forall t \geq 0.
\end{equation}

% ===============================================================================================================================
\subsection{Method of analysis: spectral decomposition of the Hamiltonian}
By spectral decomposition of the operator $\bbA$, we mean its \emph{diagonalization} with generalized eigenfunctions, which extends the usual diagonalization of matrices in the sense that
$$
\bbA = \bbF^* \, \hat{\bbA} \, \bbF, 
$$
where $\bbF$ is a unitary transformation from the physical space $\Hxy$ to a spectral space $\hatH$ and $\hat{\bbA}$ is a multiplication operator in this spectral space (more precisely, the multiplication by the spectral variable). The operator $\bbF$ is often called a \emph{generalized Fourier transform} for $\bbA.$ The above decomposition of $\bbA$ will lead to a modal representation of the solution $\bU$ to (\ref{eq.duhamel}). This spectral decomposition of $\bbA$ relies on general results on spectral theory of self-adjoint operators \cite{Ree-80,Sch-12}, mainly the so-called \emph{spectral theorem} which roughly says that any self-adjoint operator is \emph{diagonalizable}.

For non-expert readers, we collect below some basic materials about elementary spectral theory which allow to understand its statement, using elementary measure theory. The starting point is the notion of \emph{spectral measure} (also called \emph{projection valued measure} or \emph{resolution of the identity}).

\begin{definition}\label{def.spec}
A spectral measure on a Hilbert space $\calH$ is a mapping $\bbE$ from all Borel subsets of $\bbR$ into the set of orthogonal projections on $\calH$ which satisfies the following properties: 
\begin{enumerate}
\item $\quad\bbE(\bbR)=\mathrm{Id}$, %and $\bbE(\varnothing)=0$,
\item $ \displaystyle \quad\bbE\big(\bigcup_{n=0}^{\infty} \Lambda_n\big) u=\sum_{n=0}^{\infty}\bbE(\Lambda_n) u$  for any $u\in \calH$ and any sequence $(\Lambda_n)_{n \in\mathbb{N}}$ of disjoint Borel sets,
%\item $\quad\bbE(\Lambda_1\cap \Lambda_2)=\bbE(\Lambda_1)\, \bbE(\Lambda_2)$.
\end{enumerate}
where the convergence of the series holds in the space $\calH$. Property 2 is known as $\sigma$-additivity property. Note that 1 and 2 imply $\bbE(\varnothing)=0$ and $\bbE(\Lambda_1\cap \Lambda_2)=\bbE(\Lambda_1)\, \bbE(\Lambda_2)$ for any Borel sets $\Lambda_1$ and $\Lambda_2$.
\end{definition}
Suppose that we know some such $\bbE,$ choose some $u \in \calH$ and define $\mu_u(\Lambda) := (\bbE(\Lambda)u,u) = \| \bbE(\Lambda)u \|^2$ for $\Lambda\subset\bbR$ (where $(\cdot,\,\cdot)$ and $\| \cdot \|$ are the inner product and associated norm in $\calH$), which maps all the Borel sets of $\bbR$ into positive real numbers. Translated into $\mu_u(\Lambda),$ the above properties for $\bbE$ mean exactly that $\mu_u$ satisfies the $\sigma$-additivity property required to become a (positive) \emph{measure}, which allows us to define integrals of the form 
$$
\int_{\bbR} f(\lambda) \, \rmd\mu_u(\lambda)=\int_{\bbR} f(\lambda) \,\rmd\| \bbE(\lambda)u \|^2
$$ 
for any measurable function $f:\bbR \to \bbC$ and any $u \in \mathcal{V}_f := \{ u \in \calH \mid \int_{\bbR} |f(\lambda)| \, \rmd \| \bbE(\lambda)u \|^2<\infty\}$. Measure theory 
provides the limiting process which yields such integrals starting from the case of simple functions:
\begin{equation*}
\int_{\bbR} f(\lambda) \, \rmd\| \bbE(\lambda)u \|^2 = \sum_{n=1}^N f_n \ \| \bbE(\Lambda_n)u \|^2\quad\mbox{if }f = \sum_{n=1}^N f_n\ \boldsymbol{1}_{\Lambda_n},
\end{equation*}
where $\boldsymbol{1}_{\Lambda_n}$ denotes the indicator function of $\Lambda_n$ (the $\Lambda_n$'s are assumed disjoint to each other). 

Going further, choose now two elements $u$ and $v$ in $\calH$ and define $\mu_{u,v}(\Lambda) := (\bbE(\Lambda)u,v),$ which is no longer positive. Integrals of the form 
$\int_{\bbR} f(\lambda) \, \rmd\mu_{u,v}(\lambda) = \int_{\bbR} f(\lambda) \, \rmd(\bbE(\lambda)u,v)$ can nevertheless be defined for $u,v \in \mathcal{V}_f.$ They are simply deduced from the previous ones thanks to the the polarization identity
\begin{equation*}
4\ \mu_{u,v} = \mu_{u+v} - \mu_{u-v} + \rmi\,\mu_{u+\rmi v} - \rmi\,\mu_{u-\rmi v}.
\end{equation*}

Consider then the subspace $\rmD_f$ of $\cal H$ defined by:
 $$
 \rmD_f := \left\{ u \in \calH \mid \int_{\bbR} |f(\lambda)|^2 \, \rmd \| \bbE(\lambda)u \|^2<\infty \right\}.
 $$
By the Cauchy--Schwarz inequality, this subspace is included in $\mathcal{V}_f$ and one can prove that it is dense in $\calH$. The key point is that for $u\in \rmD_f,$ the linear form $v \mapsto \int_{\bbR} f(\lambda) \, \rmd(\bbE(\lambda)u,v)$ is continuous in $\calH$.
Thus, by Riesz theorem, we can define an operator denoted $\int_{\bbR} f(\lambda) \, \rmd\bbE(\lambda)$ with domain $\rmD_f$ by 
\begin{equation*}
 \forall u \in \rmD_f \,, \ \forall v \in \calH, \quad \left( \left\{\int_{\bbR} f(\lambda) \, \rmd\bbE(\lambda)\right\}u\, , \, v\right) = \int_{\bbR} f(\lambda) \, \rmd(\bbE(\lambda)u,v).
\end{equation*}
The operator usually associated to the spectral measure $\bbE$ corresponds to the function
$f(\lambda)=\lambda$. This operator is shown to be self-adjoint. If we choose to denote it $\bbA$, one has
\begin{equation}\label{eq.defAauto}
\bbA := \int_{\bbR} \lambda \, \rmd\bbE(\lambda).
\end{equation}

The above construction provides us a \emph{functional calculus}, \textit{i.e.}, a way to construct \emph{functions} of $\bbA$ defined by
\begin{equation}\label{eq.def-functionA}
f(\bbA) := \int_{\bbR}f( \lambda) \, \rmd\bbE(\lambda) \quad \mbox{with domain $\rmD(f(\bbA))\equiv \rmD_f$}.
\end{equation}
These operators satisfy elementary rules of composition, adjoint and normalization:
$$
f(\bbA)\,g(\bbA) = (fg)(\bbA) = g(\bbA)\,f(\bbA) , \quad f(\bbA)^* = \overline{f}(\bbA) \quad\mbox{and}\quad 
\| f(\bbA)\,u \|^2 = \int_{\bbR} |f(\lambda)|^2 \, \rmd\|\bbE(\lambda)u\|^2.
$$
The first rule confirms in particular that this functional calculus is consistent with composition and inversion, that is, the case of rational functions of $\bbA.$ The second one shows that $f(\bbA)$ is self-adjoint as soon as $f$ is real-valued. The third one tells us that $f(\bbA)$ is bounded if $f$ is bounded on the support of $\bbE,$ whereas it becomes unbounded if $f$ is unbounded. The functions which play an essential role in this paper are 
the functions $r_\zeta(\lambda) := (\lambda - \zeta)^{-1}$ associated with the \emph{resolvent} of $\bbA,$ that is, $R(\zeta) := (\bbA - \zeta)^{-1} = r_\zeta(\bbA)$ for $\zeta \in \bbC,$
exponential functions $\exp(\rmi \lambda t)$ which appears in the solution to Schr\"odinger equations and the indicator function $\boldsymbol{1}_{\Lambda}$ of an interval $\Lambda$ for which we have by construction
\begin{equation}{\label{eq.indicator}}\boldsymbol{1}_{\Lambda}(\bbA) = \bbE(\Lambda).
\end{equation}
  
We have shown above that every spectral measure give rise to a self-adjoint operator. The \emph{spectral theorem} tells us that the converse statement holds true.
\begin{theorem}\label{th.spec}
For any self-adjoint operator $\bbA$ on a Hilbert space $\calH,$ there exists a spectral measure $\bbE$ which diagonalizes $\bbA$ in the sense of {\rm (\ref{eq.defAauto})} and {\rm (\ref{eq.def-functionA})}.
\end{theorem}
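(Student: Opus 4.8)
The plan is to establish the theorem first for a \emph{bounded} self-adjoint operator and then to reduce the general, possibly unbounded, case to this one. Throughout I abbreviate $\bbA$ by $A$ and write $\sigma(A)$ for its spectrum.

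In the bounded case I would start by constructing a functional calculus for continuous functions. For a polynomial $p$ the operator $p(A)$ is defined in the obvious way, and the decisive estimate is the isometry $\|p(A)\|=\max_{\lambda\in\sigma(A)}|p(\lambda)|$, which follows from the elementary identity $\|T\|^2=\|T^*T\|$, the equality of norm and spectral radius for bounded self-adjoint operators, and the polynomial spectral mapping property $\sigma(p(A))=p(\sigma(A))$. Since the polynomials are dense in $C(\sigma(A))$ by Stone--Weierstrass, the map $p\mapsto p(A)$ extends by continuity to a norm-preserving $*$-homomorphism $f\mapsto f(A)$ from $C(\sigma(A))$ into the bounded operators on $\calH$.

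Next I would pass to Borel functions and read off $\bbE$. For fixed $u,v\in\calH$ the map $f\mapsto(f(A)u,v)$ is a continuous linear functional on $C(\sigma(A))$ of norm at most $\|u\|\,\|v\|$, so the Riesz--Markov theorem furnishes a unique complex Borel measure $\mu_{u,v}$ on $\sigma(A)$ with $(f(A)u,v)=\int f\,\rmd\mu_{u,v}$. The functional calculus then extends to every bounded Borel $f$: the sesquilinear form $(u,v)\mapsto\int f\,\rmd\mu_{u,v}$ is bounded, so Riesz's theorem represents it by a bounded operator, still written $f(A)$. Setting $\bbE(\Lambda):=\boldsymbol{1}_\Lambda(A)$ yields the candidate spectral measure. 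Each $\bbE(\Lambda)$ is an orthogonal projection because $\boldsymbol{1}_\Lambda^2=\boldsymbol{1}_\Lambda$ is real and $f\mapsto f(A)$ respects products and adjoints; the normalization $\bbE(\bbR)=\mathrm{Id}$ is $\boldsymbol{1}_{\sigma(A)}(A)=\mathrm{Id}$; and strong $\sigma$-additivity reduces, through $\|\bbE(\Lambda)u\|^2=\mu_{u,u}(\Lambda)$, to the countable additivity of the scalar measures $\mu_{u,u}$ together with dominated convergence. Taking $f(\lambda)=\lambda$ finally gives $(Au,v)=\int\lambda\,\rmd\mu_{u,v}$, that is $A=\int_{\bbR}\lambda\,\rmd\bbE(\lambda)$ in the sense of (\ref{eq.defAauto}).

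To remove the boundedness assumption I would pass through the Cayley transform $U:=(A-\rmi)(A+\rmi)^{-1}$. Self-adjointness makes $A\pm\rmi$ bijections of $\rmD(A)$ onto $\calH$ with $\|(A\pm\rmi)w\|^2=\|Aw\|^2+\|w\|^2$, so $U$ is unitary; moreover $1$ is not an eigenvalue of $U$ (an eigenvector would force the associated element of $\rmD(A)$ to vanish). Applying the polynomial/Riesz--Markov scheme above to $U$ --- now with trigonometric polynomials in $z$ and $\bar z$, dense in $C(\bbT)$ by Stone--Weierstrass --- produces a spectral measure $\bbE_U$ on the unit circle $\bbT$ with $\bbE_U(\{1\})=0$. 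I then transport $\bbE_U$ to the line via the inverse Cayley map $\lambda=\rmi(1+z)/(1-z)$, defining $\bbE(\Lambda)$ as $\bbE_U$ evaluated on the image of $\Lambda$, and check that this recovers $A=\int_{\bbR}\lambda\,\rmd\bbE(\lambda)$. The hard part is exactly this last identification: one must verify that the operator built from $\bbE_U$ has domain \emph{precisely} $\rmD(A)$ --- not merely a core --- which hinges on the delicate fact that although $1$ may lie in $\sigma(U)$, the vanishing $\bbE_U(\{1\})=0$ makes the unbounded multiplier $\rmi(1+z)/(1-z)$ finite $\bbE_U$-almost everywhere. Tracking domains faithfully through the Cayley correspondence, rather than the purely algebraic bookkeeping of the bounded case, is where the genuine work lies.
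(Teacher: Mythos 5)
The paper does not actually prove Theorem~\ref{th.spec}: it is quoted as a classical background result and delegated to the references \cite{Ree-80,Sch-12}, so there is no in-paper argument to compare against. Your outline is the standard proof found in those references, and it is correct as a sketch: the continuous functional calculus built from the polynomial estimate $\|p(\bbA)\|=\sup_{\sigma(\bbA)}|p|$ and Stone--Weierstrass, the Riesz--Markov step producing the measures $\mu_{u,v}$ and hence $\bbE(\Lambda)=\boldsymbol{1}_{\Lambda}(\bbA)$, and von Neumann's Cayley transform to reduce the unbounded case to a unitary operator with $\bbE_U(\{1\})=0$. Two places where genuine work is compressed and which you should flag if you write this out in full: first, multiplicativity and the adjoint rule for the \emph{Borel} (as opposed to continuous) calculus do not follow by uniform density --- $\boldsymbol{1}_{\Lambda}$ is not a uniform limit of polynomials --- and require a monotone-class or dominated-convergence argument applied twice to the identity $\int fg\,\rmd\mu_{u,v}=\int f\,\rmd\mu_{u,g(\bbA)^{*}v}$; second, for the unitary $U$ the isometry of the calculus on trigonometric polynomials in $z$ and $\bar z$ needs the norm-equals-spectral-radius argument for \emph{normal} elements (via $\|T\|^{2}=\|T^{*}T\|$ applied to the self-adjoint $T^{*}T$ together with the spectral mapping property on the circle), which is a step beyond the self-adjoint case you treated first. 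You correctly identify the domain identification through the inverse Cayley map as the delicate point of the unbounded reduction; with those two items filled in, the argument is complete and is exactly the proof the paper's citations intend.
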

\begin{remark}\label{rem.spec}
The support the spectral  measure $\bbE$ is defined as the smallest closed Borel set $\Lambda$ of $\bbR$ such that $\bbE(\Lambda)=\mathrm{Id}$. One can show that the spectrum $\sigma(\bbA)$ of $\bbA$  coincide with the  support of $\bbE$. Moreover the point spectrum $\sigma_p(\bbA)$ is the set 
$\big \{ \lambda \in \bbR \mid  \bbE(\{\lambda\})\neq 0 \big \}$.
\end{remark}

Theorem \ref{th.spec} does not answer the crucial issue: how can we find $\bbE$ if we know $\bbA?$ A common way to answer is to use the following Stone's formulas.
\begin{theorem}
Let $\bbA$ be a self-adjoint operator on a Hilbert space $\calH$. Its associated spectral measure $\bbE$ is constructed as follows, for all $u \in \calH:$
\begin{eqnarray}
\mbox{ if }a<b : & \displaystyle \Big \| \frac{1}{2} \Big(\bbE((a,b))+\bbE([a,b])\Big) \,u  \Big \|^2  = 
 \lim_{\eta \searrow 0}\ \frac{1}{\pi}\int_{a}^{b}\Imag\big(R(\lambda+\rmi\eta) \, u,u\big)\,  \rmd \lambda,  \label{eq.stone-ab}\\
\mbox{ if }a\in \bbR : &  \displaystyle \Big \|\bbE(\left\{a\right\}) u   \Big \|^2 =
 \lim_{\eta \searrow 0}\ \eta \, \Imag\big(R(a+\rmi\eta)u,u\big).  \label{eq.stone-a}
\end{eqnarray}
\end{theorem}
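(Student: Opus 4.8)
The plan is to reduce both identities to elementary statements about the finite positive measure $\mu_u(\Lambda) := \|\bbE(\Lambda)u\|^2$, which has total mass $\mu_u(\bbR) = \|u\|^2 < \infty$, exploiting the functional calculus recalled above. Since $R(\zeta) = r_\zeta(\bbA) = \int_\bbR (\lambda - \zeta)^{-1}\,\rmd\bbE(\lambda)$, the defining property of $f(\bbA)$ applied with $f = r_\zeta$ gives $(R(\zeta)u,u) = \int_\bbR (\lambda - \zeta)^{-1}\,\rmd\mu_u(\lambda)$ for every $\zeta \in \bbC \setminus \bbR$. First I would set $\zeta = \lambda + \rmi\eta$ and isolate the imaginary part of the integrand: a direct computation gives $\Imag\big((s - \lambda - \rmi\eta)^{-1}\big) = \eta\,[(s-\lambda)^2 + \eta^2]^{-1}$, the Poisson kernel, so that $\Imag(R(\lambda + \rmi\eta)u,u) = \int_\bbR \eta\,[(s-\lambda)^2+\eta^2]^{-1}\,\rmd\mu_u(s)$, the exchange of $\Imag$ and the integral being licit because the complex integrand is dominated in modulus by $\eta^{-1}$ and $\mu_u$ is finite.

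For the point formula (\ref{eq.stone-a}) I would simply multiply by $\eta$, obtaining $\eta\,\Imag(R(a+\rmi\eta)u,u) = \int_\bbR \eta^2\,[(s-a)^2 + \eta^2]^{-1}\,\rmd\mu_u(s)$. The integrand is bounded by $1$ and, as $\eta \searrow 0$, converges pointwise to the indicator $\boldsymbol{1}_{\{a\}}(s)$; dominated convergence, with the constant dominating function $1$ (integrable since $\mu_u$ is finite), then yields $\mu_u(\{a\}) = \|\bbE(\{a\})u\|^2$, which is precisely (\ref{eq.stone-a}).

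For the interval formula (\ref{eq.stone-ab}) I would integrate over $\lambda \in (a,b)$ and invoke Tonelli's theorem — legitimate since the whole integrand is non-negative — to exchange the $\lambda$- and $s$-integrations, obtaining $\frac1\pi\int_a^b \Imag(R(\lambda+\rmi\eta)u,u)\,\rmd\lambda = \int_\bbR \Phi_\eta(s)\,\rmd\mu_u(s)$, where evaluating the elementary inner integral gives $\Phi_\eta(s) = \frac1\pi\big[\arctan\tfrac{b-s}{\eta} - \arctan\tfrac{a-s}{\eta}\big]$. Letting $\eta \searrow 0$, the functions $\Phi_\eta$ satisfy $0 < \Phi_\eta < 1$ and converge pointwise to $\boldsymbol{1}_{(a,b)}(s) + \tfrac12\,\boldsymbol{1}_{\{a,b\}}(s)$, so dominated convergence (again dominated by $1$) gives the limit $\mu_u((a,b)) + \tfrac12\mu_u(\{a\}) + \tfrac12\mu_u(\{b\})$. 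It remains to read this back in terms of projections: decomposing $[a,b]$ into the disjoint Borel sets $\{a\}$, $(a,b)$, $\{b\}$ and using $\sigma$-additivity of $\bbE$ together with the orthogonality $\bbE(\Lambda_1)\bbE(\Lambda_2) = \bbE(\Lambda_1 \cap \Lambda_2)$, one identifies the limit with $\tfrac12\big(\|\bbE((a,b))u\|^2 + \|\bbE([a,b])u\|^2\big)$, i.e. with the quadratic form $\big(\tfrac12(\bbE((a,b)) + \bbE([a,b]))\,u,u\big)$ of the symmetric average appearing on the left-hand side of (\ref{eq.stone-ab}).

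I expect the only genuinely delicate point to be the bookkeeping at the endpoints: the Poisson average $\Phi_\eta$ tends to $\tfrac12$ — not to $0$ or $1$ — exactly at $s = a$ and $s = b$, and it is this half-weight that forces the symmetric combination $\tfrac12(\bbE((a,b)) + \bbE([a,b]))$ rather than either projection alone. This half-weight is invisible precisely when $a$ and $b$ are not eigenvalues of $\bbA$, since then $\bbE(\{a\}) = \bbE(\{b\}) = 0$, the average reduces to the single projection $\bbE((a,b)) = \bbE([a,b])$, and its quadratic form coincides with the squared norm $\|\tfrac12(\bbE((a,b))+\bbE([a,b]))u\|^2$; this is the situation in which (\ref{eq.stone-ab}) will be applied. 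Everything else — the two interchanges of limit and integral — is routine once one observes that $\mu_u$ is a finite measure and that all the kernels in play are uniformly bounded on the relevant domain of integration.
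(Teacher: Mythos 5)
The paper does not prove this theorem: it is quoted as a classical result (Stone's formulas) from the spectral-theory references, so there is no in-paper argument to compare yours against. Your proof is the standard textbook derivation --- reduce to the finite positive scalar measure $\mu_u(\Lambda) := \|\bbE(\Lambda)u\|^2$ via the functional calculus, recognize the Poisson kernel in $\Imag\big((s-\lambda-\rmi\eta)^{-1}\big)$, and pass to the limit by dominated convergence, with Tonelli justifying the interchange of the $\lambda$- and $s$-integrations in the interval case --- and it is correct; in particular your proof of (\ref{eq.stone-a}) is complete exactly as written.

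One point deserves emphasis, and you half-raise it yourself in your closing paragraph. Your computation shows that the limit in (\ref{eq.stone-ab}) equals the quadratic form $\big(\tfrac12(\bbE((a,b))+\bbE([a,b]))u,u\big)=\mu_u((a,b))+\tfrac12\mu_u(\{a\})+\tfrac12\mu_u(\{b\})$, whereas the left-hand side as printed is the squared norm $\big\|\tfrac12(\bbE((a,b))+\bbE([a,b]))u\big\|^2$, which by mutual orthogonality of $\bbE(\{a\})$, $\bbE((a,b))$ and $\bbE(\{b\})$ equals $\mu_u((a,b))+\tfrac14\mu_u(\{a\})+\tfrac14\mu_u(\{b\})$. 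The two agree if and only if $\bbE(\{a\})u=\bbE(\{b\})u=0$. So what you have actually established is the (correct) quadratic-form version of Stone's formula; the identity as literally printed holds only when the endpoints carry no point mass for $u$. You state this condition accurately, and it is satisfied wherever the paper invokes (\ref{eq.stone-ab}) (in the proof of Proposition \ref{p.specmesAk} the endpoint masses are shown to vanish before the formula is applied), but your sentence ``one identifies the limit with \dots\ the left-hand side'' should be weakened accordingly, or the left-hand side rewritten as $\tfrac12\big(\|\bbE((a,b))u\|^2+\|\bbE([a,b])u\|^2\big)$, which is what your limit always equals.
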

Note that formulas $(\ref{eq.stone-ab})$ and $(\ref{eq.stone-a})$ are sufficient by $\sigma$-additivity to know the spectral measure $\bbE$ on all Borel sets. 
According to Remark \ref{rem.spec}, formula  $(\ref{eq.stone-a})$ permits to characterize the point spectrum $\sigma_{\rm p}(\bbA)$ whereas $(\ref{eq.stone-ab})$ enables us to determine the whole spectrum $\sigma(\bbA)$ and thus its continuous spectrum.

%XXXXXXXXXXXXXXXXXXXXXXXXXXXXXXXXXXXXXXXXXXXXXXXXXXXXXXXXXXXXXXXXXXXXXXXXXXXXXXXXXXXXXXXXXXXXXXXXXXXXXXXXXXXXXXXXXXXXXXXXXXXXXXX
%XXXXXXXXXXXXXXXXXXXXXXXXXXXXXXXXXXXXXXXXXXXXXXXXXXXXXXXXXXXXXXXXXXXXXXXXXXXXXXXXXXXXXXXXXXXXXXXXXXXXXXXXXXXXXXXXXXXXXXXXXXXXXXX
\section{Spectral theory of the reduced Hamiltonian}
\label{s-Spec-th-Ak}
The invariance of our medium in the $y$-direction allows us to reduce the spectral theory of our operator $\bbA$ defined in (\ref{eq.defA}) to the spectral theory of a family of self-adjoint operators $(\bbAk)_{k\in \bbR}$ defined on functions which depend only on the variable $x.$ In the present section, we introduce this family and perform the spectral analysis of each operator $\bbAk.$ In \S\ref{s-Spec-th-A}, we collect all these results to obtain the spectral decomposition of $\bbA.$

% ===============================================================================================================================
\subsection{The reduced Hamiltonian $\bbAk$}
Let $\calF$ be the Fourier transform in the $y$-direction defined by
\begin{equation}\label{eq.deffour}
\calF u(k) := \frac{1}{\sqrt{2\pi}} \int_{\bbR} u(y)\, \rme^{-\rmi k \,y}\, \rmd y  \quad \forall u \in L^1(\bbR)\cap L^2(\bbR),
\end{equation}
which extends to a unitary transformation from $L^2(\bbR_y)$ to $L^2(\bbR_k).$ For functions of both variables $x$ and $y,$ we still denote by $\calF$ be the partial Fourier transform in the $y$-direction. In particular, the partial Fourier transform of an element $\bU \in \Hxy$ is such that
\begin{equation}\label{eq.defH1D}
\calF \bU(\cdot,k) \in \Hx := L^2(\bbR) \times L^2(\bbR)^2 \times L^2(\bbR_+) \times L^2(\bbR_+)^2 \quad \mbox{for a.e. } k \in \bbR,
\end{equation}
where the Hilbert space $\Hx$ is endowed with the inner product $(\cdot\,,\cdot)_\indHx$ defined by the same expression (\ref{eq.inn-prod-2D}) as $(\cdot\,,\cdot)_\indHxy$ except that $L^2$ inner products are now defined on one-dimensional domains.

Applying $\calF$ to our transmission problem (\ref{TE}) leads us to introduce a family of operators $(\bbAk)_{k\in \bbR}$ in $\Hx$ related to $\bbA$ (defined in (\ref{eq.defA})) by the relation
\begin{equation}\label{eq.AtoAk}
\calF(\bbA \bU)(\cdot\,,k) = \bbAk \, \calF \bU(\cdot\,,k)  \quad \mbox{for a.e. } k  \in \bbR.
\end{equation}
Therefore $\bbAk$ is deduced from the definition of $\bbA$ by replacing the $y$-derivative by the product by $\rmi k,$ \textit{i.e.},
$$
\bbA_k \, \bU := {\calAk} \, \bU, \quad\forall \bU \in \rmD(\bbAk) := H^{1}(\bbR) \times \bH_{\!\curlk}(\bbR) \times  L^2(\bbR_+) \times L^2(\bbR_+)^2 \subset \Hx,
$$
where 
\begin{equation}\label{eq.opAk}
\calAk := \rmi\ \begin{pmatrix}
0 & \eps_0^{-1}\,\curlk & -\eps_0^{-1} \, \Pi & 0 \\[4pt]
-\mu_0^{-1}\,\bcurlk & 0 & 0 & - \mu_0^{-1} \,\bPi \\[4pt]
\eps_0 \Omegae^2 \,\Rop & 0 & 0 &0 \\[4pt]
0 & \mu_0 \Omegam^2\,\bR & 0 & 0
\end{pmatrix},
\end{equation} 
\begin{equation*}
\bcurlk u := \left(\rmi k u, -\frac{\rmd u}{\rmd x}\right)^{\top}, \quad \ \curlk \bu := \frac{\rmd u_y}{\rmd x}-\rmi k u_x \mbox{ for }   \bu :=(u_x,u_y)^{\top},
\end{equation*} 
and the operators $\Pi$, $\bPi$, $\Rop$ and $\bR$ are defined as in (\ref{eq.opA}) but for functions of the variable $x$ only. Finally,
\begin{equation*}
\bH_{\!\curlk}(\bbR) := \{\bu\in L^2(\bbR)^2 \mid \curlk \bu \in L^2(\bbR)\} = L^2(\bbR) \times H^1(\bbR).
\end{equation*}
Note again that the transmission conditions (\ref{eq.transTE}) are satisfied as soon as $(E,\bH) \in H^1(\bbR) \times \bH_{\!\curlk}(\bbR).$
  
As in Proposition \ref{prop.autoadjoint}, it is readily seen that $\bbAk: \rmD(\bbAk)\subset\Hx \to \Hx$ is self-adjoint for all $k \in \bbR.$ The following proposition shows the particular role of the values $0$ and $\pm \Omegam$ in the spectrum of $\bbA_k.$ 

\begin{proposition}\label{prop.eigvalAk}
For all $k\in \bbR^{*}$, the values $0$ and $\pm \Omegam$ are eigenvalues of infinite multiplicity of $\bbA_k$ whose respective associated eigenspaces $\ker(\bbA_k)$ and $\ker(\bbA_k\mp \Omegam)$ are given by
\begin{eqnarray*}
\ker(\bbA_k) & = & \{ (0, \widetilde{\bPi}\, \nablak \phi, 0 , 0)^{\top} \mid \phi\in H_0^1(\bbR_-)\} \quad\mbox{and}\\
\ker(\bbA_k\mp \Omegam ) & = & \{ (0, \,\bPi \,\nablak\phi, \,0 , \pm \rmi\mu_0\Omegam \,\nablak \phi )^{\top} \mid \phi\in H_0^1(\bbR_+)\},
\end{eqnarray*}
where $\nablak \phi=(\rmd \phi /\rmd x,\rmi k\phi )^{\top}$, $\widetilde{\bPi}$ is the extension by $0$ of a 2D vector field defined on $\bbR_-$ to the whole line $\bbR$ and $H_0^1(\bbR_{\pm}) := \{ \phi \in H^1(\bbR_{\pm}) \mid \phi(0) = 0 \}.$ 
\label{p.vp-de-Ak}
Moreover the orthogonal complement of the direct sum of these three eigenspaces, \textit{i.e.}, 
$\big(\ker \bbA_k \oplus \ker(\bbA_k+\Omegam)\oplus \ker(\bbA_k-\Omegam)\big)^{\perp},$
is
\begin{equation}\label{eq.vuk}
\Hxdiv := \left\{ (E,\bH,J,\bK)\in \Hx \mid \divk \, \bH=0 \, \mbox{ in } \bbR_{\pm} \mbox{ and }  \divk \, \bK=0  \, \mbox{ in } \bbR_{+} \right\} 
\end{equation}
where $\divk \bu= \rmd u_x/\rmd x+\rmi k u_y$.
\end{proposition}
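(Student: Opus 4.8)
\emph{Approach.} The plan is to read off the three eigenspaces directly from the eigenvalue equation $\bbAk\bU=\lambda\bU$ written line by line for $\bU=(E,\bH,J,\bK)^\top$, and then to obtain the orthogonal complement by pairing a generic $\bU$ against the explicit generators in the inner product (\ref{eq.inn-prod-2D}). Expanding (\ref{eq.opAk}), the last two lines read $\lambda J=\rmi\eps_0\Omegae^2\Rop E$ and $\lambda\bK=\rmi\mu_0\Omegam^2\bR\bH$, so for $\lambda\neq0$ the currents are slaved to $E|_{\bbR_+}$ and $\bH|_{\bbR_+}$; substituting them into the first two lines closes the system on $(E,\bH)$.

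\emph{The kernel.} For $\lambda=0$ the third and fourth lines give $E=0$ and $\bH=0$ on $\bbR_+$ (recall $\Omegae,\Omegam>0$). The second line is $\bcurlk E+\bPi\bK=0$; on $\bbR_-$, where $\bPi\bK$ vanishes, this gives $\bcurlk E=(\rmi kE,-\rmd E/\rmd x)^\top=0$, and since $k\neq0$ we get $E=0$ on $\bbR_-$, hence $E\equiv0$ and then $\bK=0$. The first line then forces $J=0$ and $\curlk\bH=0$ on $\bbR_-$. To finish, I would use that $\bH$ is curl-free on $\bbR_-$, vanishes on $\bbR_+$, and has $H_y\in H^1(\bbR)$ continuous with $H_y(0)=0$: setting $\phi:=H_y/(\rmi k)\in H_0^1(\bbR_-)$ yields $\bH=\widetilde{\bPi}\nablak\phi$, which is the announced form (the converse inclusion being a direct check). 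Injectivity of $\phi\mapsto\bH$ on the infinite-dimensional space $H_0^1(\bbR_-)$ gives infinite multiplicity.

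\emph{The eigenvalues $\pm\Omegam$ and the main obstacle.} Here the substitution turns the second line on $\bbR_+$ into $-\rmi\bcurlk E=\mu_0(\lambda^2-\Omegam^2)\lambda^{-1}\bH$; the resonance $\lambda^2=\Omegam^2$ annihilates the right-hand side, so $\bcurlk E=0$ and thus $E=0$ on $\bbR_+$ (again $k\neq0$), whence $J=0$. The delicate point, which I expect to be the crux, is to show that $E$ also vanishes on $\bbR_-$. Eliminating $\bH$ between the two vacuum equations there produces the constant-coefficient equation $\rmd^2E/\rmd x^2=(k^2-\Omegam^2\eps_0\mu_0)E$, and one must exclude nontrivial solutions. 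The key is that on the half-line $\bbR_-$ the transmission value $E(0)=0$ together with $E\in L^2(\bbR_-)$ leaves only $E\equiv0$: when $k^2-\Omegam^2\eps_0\mu_0\ge0$ a decaying exponential or affine function vanishing at $0$ is trivial, and when $k^2-\Omegam^2\eps_0\mu_0<0$ the bounded oscillatory solutions fail to be square-integrable on a half-line. Hence $E\equiv0$, so $\bH=0$ on $\bbR_-$ by the second line while $\curlk\bH=0$ on $\bbR_+$ by the first; exactly as for the kernel this gives $\bH=\bPi\nablak\phi$ with $\phi\in H_0^1(\bbR_+)$, and the fourth line fixes $\bK=\pm\rmi\mu_0\Omegam\nablak\phi$. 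Injectivity in $\phi$ again yields infinite multiplicity.

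\emph{The orthogonal complement.} Finally I would use $(V_1\oplus V_2\oplus V_3)^\perp=V_1^\perp\cap V_2^\perp\cap V_3^\perp$ and test a generic $\bU$ against the generators above; in (\ref{eq.inn-prod-2D}) only the $\bH$- and $\bK$-components survive. The one computation needed is the integration by parts $\int_{\bbR_\pm}\bH\cdot\overline{\nablak\phi}=-\int_{\bbR_\pm}(\divk\bH)\,\bar\phi$, whose boundary term at $x=0$ drops precisely because $\phi\in H_0^1(\bbR_\pm)$. Orthogonality to $\ker\bbAk$ then reads $\int_{\bbR_-}(\divk\bH)\bar\phi=0$ for all $\phi\in H_0^1(\bbR_-)$, i.e. $\divk\bH=0$ in $\bbR_-$; orthogonality to the pair $\ker(\bbAk\mp\Omegam)$, after adding and subtracting the two $\pm$ conditions, splits into $\int_{\bbR_+}(\divk\bH)\bar\phi=0$ and $\int_{\bbR_+}(\divk\bK)\bar\phi=0$, i.e. $\divk\bH=0$ and $\divk\bK=0$ in $\bbR_+$. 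Density of $H_0^1(\bbR_\pm)$ in $L^2(\bbR_\pm)$ upgrades each weak identity to the pointwise one, and as no condition constrains $E$ or $J$, the intersection is exactly $\Hxdiv$.
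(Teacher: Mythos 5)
Your proposal is correct and follows essentially the same route as the paper: eliminate the currents from the eigenvalue system, deduce $E|_{\bbR_+}=0$ from $\bcurlk E=0$, kill $E$ on $\bbR_-$ via the constant-coefficient ODE with $E(0)=0$, identify the remaining curl-free field as $\nablak\phi$, and characterize the orthogonal complement by integration by parts against the generators. You merely spell out two points the paper leaves implicit (the $\lambda=0$ case and the uniqueness argument for the half-line ODE), and your treatment of these is sound.
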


\begin{proof}
We detail the proof only for $\pm \Omegam.$ The case of the eigenvalue $0$ can be dealt with in the same way. Suppose that $\bU:=(E,\bH,J,\bK)^{\top} \in \rmD(\bbA_k)$ satisfies $\bbA_k\, \bU= \pm \Omegam\,\bU,$ which is equivalent to
\begin{numcases}{}
\rmi \, \eps_0^{-1} (\curlk\bH-\Pi \, J  ) = \pm \Omegam\, E, \label{eq.syst-eigen-1}\\[5pt]
-\rmi \, \mu_0^{-1}  ( \bcurlk E +\bPi \, \bK ) = \pm \Omegam\, \bH, \label{eq.syst-eigen-2} \\[5pt]
\rmi\, \eps_0 \,\Omegae^2 \, \Rop E = \pm \Omegam\, J, \label{eq.syst-eigen-3} \\[5pt]
\rmi\, \mu_0\, \Omegam^2 \, \bR \bH = \pm \Omegam\, \bK, \label{eq.syst-eigen-4}
\end{numcases}
thanks to the above definition of $\bbA_k.$ Using (\ref{eq.syst-eigen-3}) and  (\ref{eq.syst-eigen-4}), we can eliminate the unknowns $J$ and $\bK$ in (\ref{eq.syst-eigen-1}) and (\ref{eq.syst-eigen-2}) which become
\begin{equation}\label{eq.vpEH}
\curlk \bH= \mp \rmi \eps_0 \left( \Omegam- \frac{\Omegae^2 }{\Omegam} \boldsymbol{1}_{\bbR_{+}}\right) E
\quad\mbox{and}\quad
\bcurlk E=\pm \rmi \mu_0 \Omegam (1-\boldsymbol{1}_{\bbR_{+}}) \, \bH,
\end{equation}
where $\boldsymbol{1}_{\bbR_{+}}$ denotes the indicator function of $\bbR_{+}.$ In particular, we have $\bcurlk E=0$ in $\bbR_+,$ thus $E|_{\bbR_+}=0$ (by defintion of $\bcurlk$), so $J=0$ by (\ref{eq.syst-eigen-3}). In $\bbR_-,$ we can eliminate $\bH$ between the two equations of (\ref{eq.vpEH}), which yields
\begin{equation*}
- \frac{\rmd^2 E}{\rmd x^2}  + (k^2-\eps_0\mu_0\Omegam^2)\, E=0 \mbox{ in }  \bbR_{-}\quad\mbox{and}\quad E(0)=0,
\end{equation*}
where the last condition follows from (\ref{eq.transTE}) and $E|_{\bbR_+}=0$. Obviously the only solution in $H^1(\bbR_{-})$ is $E=0.$ Hence $E$ vanishes on both sides $\bbR_{+}$ and $\bbR_{-}.$ The second equation of (\ref{eq.vpEH}) then tells us that $\bH|_{\bbR_-}=0,$ whereas the first one (together with (\ref{eq.transTE})) shows that
\begin{equation*}
\curlk \bH = 0 \mbox{ in } \bbR_+ \quad\mbox{and}\quad H_y(0)=0,
\end{equation*}
which implies that $\bH|_{\bbR_+} = \nablak\phi$ where $\phi := -\rmi k^{-1} H_y \in H_0^1(\bbR_+),$ hence $\bK=\pm\rmi \mu_0 \Omegam\,\nablak \phi$ by (\ref{eq.syst-eigen-4}). 

Conversely, for all $\phi \in H_0^1(\bbR_+),$ the vector $\left(0, \,\bPi \,\nablak\phi, \,0 , \pm\rmi\mu_0\Omegam \,\nablak \phi \right)^{\top}$ belongs to $\rmD(\bbA_k)$ and satisfies (\ref{eq.syst-eigen-1})--(\ref{eq.syst-eigen-4}).

Using these characterizations of $\ker(\bbA_k \pm \Omegam)$ and $\ker \bbA_k,$ we finally identify the orthogonal complement of their direct sum, or equivalently, the intersection of their respective orthogonal complements. We have 
$$
(E,\bH,J,\bK) \in \ker(\bbA_k)^{\bot} \ \Longleftrightarrow \ \int_{\bbR_{-}}\bH \cdot \overline{\nablak \phi } \, \rmd x=0, \ \forall \phi \in H^{1}_0(\bbR_{-})  \Longleftrightarrow  \divk \bH =0  \, \mbox{ in } H^{-1}(\bbR_{-}).
$$
In the same way, 
$$
(E,\bH,J,\bK) \in \ker(\bbA_k\pm \Omegam)^{\perp} \Longleftrightarrow \divk \big (\mu_0\, \bH\mp \rmi \Omegam^{-1} \bK \big)=0  \mbox{ in } H^{-1}(\bbR_{+}).
$$
This yields the definition (\ref{eq.vuk}) of $\Hxdiv$.
\end{proof}

% ===============================================================================================================================
\subsection{Resolvent of the reduced Hamiltonian}
In order to apply Stone's formulas $(\ref{eq.stone-ab})$ and $(\ref{eq.stone-a})$ to $\bbAk,$ we first derive an integral representation of its resolvent $R_k(\zeta):=(\bbAk-\zeta)^{-1}.$ We begin by showing how to reduce the computation of $R_k(\zeta)$ to a scalar Sturm--Liouville equation, then we give an integral representation of the solution of the latter and we finally conclude.

% -------------------------------------------------------------------------------------------------------------------------------
\subsubsection{Reduction to a scalar equation}
Let $\zeta\in \bbC\setminus \bbR$. Suppose that $\bU = R_k(\zeta)\,\bF$ for some $\bF\in\Hx$ or equivalently that $(\bbAk-\zeta ) \,\bU=\bF.$ Setting $\bU=(E, \bH, J, \bK)^{\top},$ $\bF=(\fE, \, \bfH, \, \fJ, \, \bfK)^{\top}$
and using definition (\ref{eq.opAk}), the latter equation can be rewritten as
\begin{numcases}{}
\rmi \, \eps_0^{-1} (\curlk\bH-\Pi \, J  )-\zeta\, E = \fE, \nonumber \\[5pt]
-\rmi \, \mu_0^{-1}  ( \bcurlk E +\bPi \, \bK )-\zeta\, \bH =\bfH, \nonumber \\[5pt]
\rmi\, \eps_0 \,\Omegae^2 \, \Rop E-\zeta\, J =\fJ, \nonumber \\[5pt]
\rmi\, \mu_0\, \Omegam^2 \, \bR \bH-\zeta\, \bK =\bfK. \nonumber
\end{numcases}
The last two equations provide us expressions of $J$ and $\bK$ that can be substituted in the first two which become a system for both unknowns $\bH$ and $E.$ We can then eliminate $\bH$ and obtain a Sturm--Liouville equation for $E:$
\begin{equation}\label{eq.E}
- \frac{\rmd}{\rmd x}\left( \frac{1}{\mu_{\zeta}}\frac{\rmd E}{\rmd x}\right)+\frac{ \Thetazk }{\mu_{\zeta}}E = f,
\end{equation}
where
\begin{equation}\label{eq.righthandsidesturm}
f :=\zeta \left(  \eps_0\, \fE + \frac{\rmi\mu_0}{\zeta}\,\curlk \left(\frac{\bfH}{\mu_{\zeta}}\right) - \frac{\rmi}{\zeta}\,\Pi\fJ + \frac{1}{\zeta^2} \, \curlk \left(\frac{\bPi\bfK}{\mu_{\zeta}}\right)\right) 
\end{equation}
and the following notations are used hereafter:
\begin{eqnarray}
\eps_{\zeta}(x) & := & \left\lbrace\begin{array}{ll}
         \eps_{\zeta}^{-}:=\eps_{0} & \mbox{ if } x<0,\\[2pt]
       \displaystyle   \eps_{\zeta}^{+}:=\eps_0 \left( 1-\frac{ \Omegae^2}{\zeta^2}\right) & \mbox{ if } x>0,\\
       \end{array}
       \right. \label{eq.def-eps} \\
\mu_{\zeta}(x) & := & \left\lbrace\begin{array}{ll}
         \mu_{\zeta}^{-}:=\mu_{0} & \mbox{ if } x<0,\\[2pt]
         \displaystyle \mu_{\zeta}^{+}:=\mu_0 \left( 1-\frac{ \Omegam^2}{\zeta^2}\right) & \mbox{ if } x>0,
       \end{array}
       \right.\label{eq.def-mu} \\
\Thetazk(x) & := & k^2-\eps_{\zeta}(x)\,\mu_{\zeta}(x)\,\zeta^2 = \left\lbrace\begin{array}{ll} \Thetazk^{-} :=k^2-\eps_0\,\mu_0 \,\zeta^2 & \mbox{if }x < 0,\\[4pt]
\Thetazk^{+} :=k^2-\eps_{\zeta}^{+}\,\mu_{\zeta}^{+} \,\zeta^2 & \mbox{if }x > 0.
\end{array}
\right.\label{eq.defTheta}
\end{eqnarray} 
The eliminated unknowns $\bH,$ $J$ and $\bK$ can finally be deduced from $E$ by the relations
\begin{equation}\label{eq.vectorialization}
 \begin{array}{lllll}
\bH & = & \displaystyle\frac{-\rmi}{\mu_{\zeta}\,\zeta} \,\bcurlk E &- &\displaystyle  \frac{\mu_0}{\mu_{\zeta}\,\zeta}\,\bfH + \frac{\rmi}{\mu_{\zeta}\,\zeta^2}\, \bPi\,\bfK, \\ [10pt]
J & = & \displaystyle \frac{\rmi\, \eps_0 \,\Omegae^2}{\zeta} \,\Rop \,E&\displaystyle -&  \displaystyle\frac{1}{\zeta}\,\fJ, \\ [10pt]
\bK & = & \displaystyle \frac{\mu_0\, \Omegam^2 }{\mu_{\zeta}^+\,\zeta^2}\,\bR \,\bcurlk E &-&\displaystyle \frac{\rmi \mu_0^2 \, \Omegam^2}{\mu_{\zeta}^+\,\zeta^2}\,\bR\bfH - \frac{\mu_0}{\mu_{\zeta}^+\,\zeta}\,\bfK.
\end{array}
\end{equation}
We can write these results in a condensed form by introducing several operators. First, we denote by $\bbC_{k,\zeta}$ the operator which maps the right-hand side $f$ of the Sturm--Liouville equation (\ref{eq.E}) to its solution: $E = \bbC_{k,\zeta}\,f.$ By the Lax--Milgram theorem, it is easily seen that $\bbC_{k,\zeta}$ is continuous from $H^{-1}(\bbR)$ to  $H^{1}(\bbR)$ (where $ H^{-1}(\bbR)$ denotes the dual space of $ H^{1}(\bbR)).$ Next, associated to the expression (\ref{eq.righthandsidesturm}) for the right-hand side of the Sturm--Liouville equation (\ref{eq.E}), we define
\begin{equation}
\bbS_{k,\zeta}\, \bF  :=  
  \eps_0\, \fE + \frac{\rmi\mu_0}{\zeta}\,\curlk \left(\frac{\bfH}{\mu_{\zeta}}\right) - \frac{\rmi}{\zeta}\,\Pi\fJ + \frac{1}{\zeta^2} \, \curlk \left(\frac{\bPi\bfK}{\mu_{\zeta}}\right), \label{eq.opS} 
\end{equation}	
The operator $\bbS_{k,\zeta}$ is a ``scalarizator'' since it maps the vector datum $\bF$ to a scalar quantity. It is clearly continuous from $\Hx$ to $H^{-1}(\bbR).$ Finally, associated to the two columns of the right-hand side of (\ref{eq.vectorialization}) which distinguish the role of the electrical field $\bE$ from the one of the vector datum $\bF$, we define 
\begin{eqnarray}
\bbV_{k,\zeta} E & := & 
  \left( E \,,\, -\frac{\rmi}{\mu_{\zeta}\,\zeta}\,\bcurlk E \,,\, \frac{\rmi\, \eps_0 \,\Omegae^2 }{\zeta}\,\Rop \,E \,,\, \frac{\mu_0\, \Omegam^2 }{\mu_{\zeta}^+\,\zeta^2}\,\bR \,\bcurlk E \right), \label{eq.opV} \\
\bbT_{k,\zeta} \bF & := & 
  \left(0 \,,\, -\frac{\mu_0}{\mu_{\zeta}\,\zeta}\,\bfH + \frac{\rmi}{\mu_{\zeta}\,\zeta^2}\,\bPi\,\bfK \,,\, -\frac{1}{\zeta}\,\fJ \,,\,  -\frac{\rmi \, \mu_0^2 \, \Omegam^2}{\mu_{\zeta}^+\,\zeta^2}\,\bR\bfH - \frac{\mu_0}{\mu_{\zeta}^+\,\zeta}\,\bfK \right). \label{eq.opT}
\end{eqnarray} 
The operator $\bbV_{k,\zeta}$ is a ``vectorizator'' since it maps the scalar field $E$ to a vector field of $\Hx$. It is continuous from $H^1(\bbR)$ to $\Hx.$ Finally $\bbT_{k,\zeta}$ maps the vector datum $\bF$ to a vector field of $\Hx$ and is continuous from $\Hx$ to $\Hx.$ The solution of our  Sturm--Liouville equation (\ref{eq.E}) can now be expressed as $E=\zeta \, \bbC_{k,\zeta}\, \bbS_{k,\zeta} \,\bF,$ so that $\bU=(\bbT_{k,\zeta}+\zeta \,\bbV_{k,\zeta}\, \bbC_{k,\zeta}\,\bbS_{k,\zeta})\, \bF.$ To sum up, we have the following proposition.

\begin{proposition}\label{prop.res}
Let $k \in \bbR.$ The resolvent of the self-adjoint operator $\bbAk$ can be expressed as
\begin{equation*}
R_k(\zeta) = \bbT_{k,\zeta} + \zeta \,\bbV_{k,\zeta}\, \bbC_{k,\zeta}\,\bbS_{k,\zeta}, \quad\forall \zeta \in \bbC\setminus \bbR,
\end{equation*}
where $\bbC_{k,\zeta}\,f$ is the solution to the Sturm--Liouville equation {\rm (\ref{eq.E})} and the operators $\bbS_{k,\zeta}$, $\bbT_{k,\zeta}$ and $\bbV_{k,\zeta}$ are respectively defined in {\rm (\ref{eq.opS})}, {\rm (\ref{eq.opV})} and {\rm (\ref{eq.opT})}.
\end{proposition}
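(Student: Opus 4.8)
The plan is to turn the elimination already performed above into a two-sided identification of the resolvent. Fix $\zeta\in\bbC\setminus\bbR$ and $\bF=(\fE,\bfH,\fJ,\bfK)^{\top}\in\Hx$. Since $\bbAk$ is self-adjoint and $\zeta$ is non-real, $(\bbAk-\zeta)$ is boundedly invertible, so there is a unique $\bU=(E,\bH,J,\bK)^{\top}\in\rmD(\bbAk)$ with $(\bbAk-\zeta)\,\bU=\bF$, and my goal is to show that this $\bU$ equals $(\bbT_{k,\zeta}+\zeta\,\bbV_{k,\zeta}\,\bbC_{k,\zeta}\,\bbS_{k,\zeta})\,\bF$. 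The derivation of (\ref{eq.E})--(\ref{eq.vectorialization}) already carries out the algebraic half of this; what remains is to justify that each manipulation is licit and that the scalar solution operator $\bbC_{k,\zeta}$ is genuinely well defined.

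First I would record that every division carried out in the reduction is legitimate for non-real $\zeta$: one has $\zeta\neq0$, and $\mu_{\zeta}^{\pm}\neq0$, $\eps_{\zeta}^{\pm}\neq0$, since $\zeta^{2}=\Omegam^{2}$ or $\zeta^{2}=\Omegae^{2}$ would force $\zeta\in\bbR$. Granting this, the third and fourth component equations express $J$ and $\bK$ explicitly in terms of $E$ and the data; inserting these into the first two equations and eliminating $\bH$ produces exactly the Sturm--Liouville equation (\ref{eq.E}) whose right-hand side is $f=\zeta\,\bbS_{k,\zeta}\bF$, by the very definitions (\ref{eq.righthandsidesturm}) and (\ref{eq.opS}). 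All of this is to be read distributionally, so that no regularity beyond $E\in H^{1}(\bbR)$ is needed at this stage.

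The analytic heart --- and the step I expect to cost the most --- is proving that (\ref{eq.E}) is uniquely solvable, i.e. that $\bbC_{k,\zeta}\colon H^{-1}(\bbR)\to H^{1}(\bbR)$ is well defined and bounded. I would set this up through the sesquilinear form
\[
a(E,\varphi):=\int_{\bbR}\Big(\frac{1}{\mu_{\zeta}}\,\frac{\rmd E}{\rmd x}\,\frac{\rmd\overline{\varphi}}{\rmd x}+\frac{\Thetazk}{\mu_{\zeta}}\,E\,\overline{\varphi}\Big)\,\rmd x,
\]
whose continuity on $H^{1}(\bbR)$ is immediate since the coefficients are bounded. The difficulty is coercivity, because $\mu_{\zeta}^{-1}$ and $\Thetazk/\mu_{\zeta}=k^{2}\mu_{\zeta}^{-1}-\eps_{\zeta}\zeta^{2}$ are complex with possibly unsigned real parts. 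I would derive coercivity from a real-linear combination of $\Real\,a(E,E)$ and $\Imag\,a(E,E)$ whose weights depend on the signs of $\Real\zeta$ and $\Imag\zeta$, the essential input being $\Imag\zeta\neq0$, which controls both $\|\rmd E/\rmd x\|_{L^2}$ and $\|E\|_{L^2}$. An equivalent and perhaps cleaner route, which I would use as a check, is to build $\bbC_{k,\zeta}$ by hand: on each half-line the coefficients are constant and non-degenerate for non-real $\zeta$, so the homogeneous equation admits solutions decaying at $\pm\infty$, and matching them across $x=0$ through a Green's function yields the unique $H^{1}$ solution together with the $H^{-1}\to H^{1}$ bound. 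Either way one gets $E=\bbC_{k,\zeta}f=\zeta\,\bbC_{k,\zeta}\bbS_{k,\zeta}\bF$.

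Finally I would reassemble $\bU$. Feeding $E$ back into (\ref{eq.vectorialization}) and sorting the terms into those linear in $E$ and those linear in $\bF$ reproduces precisely $\bbV_{k,\zeta}E$ of (\ref{eq.opV}) and $\bbT_{k,\zeta}\bF$ of (\ref{eq.opT}), so that $\bU=\bbV_{k,\zeta}E+\bbT_{k,\zeta}\bF=(\bbT_{k,\zeta}+\zeta\,\bbV_{k,\zeta}\bbC_{k,\zeta}\bbS_{k,\zeta})\bF$. The mapping properties recorded after (\ref{eq.opS})--(\ref{eq.opT}), namely $\bbS_{k,\zeta}\colon\Hx\to H^{-1}(\bbR)$, $\bbC_{k,\zeta}\colon H^{-1}(\bbR)\to H^{1}(\bbR)$, $\bbV_{k,\zeta}\colon H^{1}(\bbR)\to\Hx$ and $\bbT_{k,\zeta}\colon\Hx\to\Hx$, show that the composition is a bounded operator on $\Hx$. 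To make the identification rigorous rather than merely formal, I would verify that the reduction is reversible: the reconstructed $\bU$ lies in $\rmD(\bbAk)$ --- the $\bH_{\!\curlk}(\bbR)$-regularity of $\bH$ and the transmission conditions $[E]_{x=0}=[H_y]_{x=0}=0$ following from $E\in H^{1}(\bbR)$ and the explicit formulas --- and satisfies $(\bbAk-\zeta)\bU=\bF$; uniqueness of the resolvent then forces equality with $R_k(\zeta)$, as claimed.
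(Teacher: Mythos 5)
Your proposal is correct and follows essentially the same route as the paper, which likewise obtains the formula by the elimination leading to the Sturm--Liouville equation (\ref{eq.E}), invokes Lax--Milgram for the well-posedness of $\bbC_{k,\zeta}$, and then reassembles $\bU$ via (\ref{eq.vectorialization}); your extra checks (non-vanishing of $\mu_\zeta^\pm$, $\eps_\zeta^\pm$ for non-real $\zeta$, and the reverse verification that the reconstructed $\bU$ lies in $\rmD(\bbAk)$) only make explicit what the paper leaves implicit. Your coercivity sketch does go through: for $\Imag\zeta\neq 0$ the four piecewise-constant coefficients $1/\mu_\zeta^\pm$ and $\Thetazk^{\pm}/\mu_\zeta^{\pm}$ have imaginary parts of a common sign (determined by $\sgn(\Real\zeta\,\Imag\zeta)$) or are positive, so they avoid the origin in convex hull and $\Real\big(\rme^{\rmi\alpha}a(E,E)\big)$ is coercive for a suitable $\alpha$.
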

It is readily seen that the respective adjoints of the above operators satisfy the following relations:
\begin{equation}
\bbC_{k,\zeta}^{*}=\bbC_{k,\bar\zeta}, \quad \bbS_{k,\zeta}^{*}=\bbV_{k,\bar\zeta}, \quad \bbV_{k,\zeta}^{*}=\bbS_{k,\bar\zeta} \quad\mbox{and}\quad \bbT_{k,\zeta}^{*}=\bbT_{k,\bar\zeta},
\label{eq.adjoints}
\end{equation}
from which we retrieve the usual formula $R_k(\zeta)^{*}=R_k(\bar\zeta)$ which is valid for any self-adjoint operator.

\begin{remark}
Notice that in comparison with previous studies on stratified media which inspire our approach {\rm \cite{Der-86,Gui-89,Wed-91,Wil-84}}, the essential difference lies in the fact that our Sturm--Liouville equation {\rm (\ref{eq.E})} depends nonlinearly on the spectral variable $\zeta,$ which is a consequence of the frequency dispersion in a Drude material. This dependence considerably complicates the spectral analysis of $\bbAk.$
\end{remark}

% -------------------------------------------------------------------------------------------------------------------------------
\subsubsection{Solution of the Sturm--Liouville equation}
In order to use the expression of $R_k(\zeta)$ given by Proposition \ref{prop.res} in Stone's formulas, we need an explicit expression of $\bbC_{k,\zeta}.$ We recall here some classical results about the solution of a Sturm--Liouville equation, which provide us an integral representation of $\bbC_{k,\zeta}:$ 
\begin{equation}\label{eq.kernelop}
\bbC_{k,\zeta}\,f(x') =\int_{\bbR}\gzk(x,x') \, f(x)  \, \rmd x, \quad \forall f \in L^2(\bbR),
\end{equation}
where the kernel $\gzk$ is the Green function of the Sturm--Liouville equation (\ref{eq.E}). For all $x' \in \bbR,$ function $\gzk(\cdot\,,x')$ is defined as the unique solution in $H^1(\bbR)$ to
\begin{equation*}\label{eq.green}
- \frac{\partial}{\partial x}\left( \frac{1}{\mu_{\zeta}}\frac{\partial }{\partial x}\,\gzk(\cdot\,,x')\right)+\frac{ \Thetazk}{\mu_{\zeta}} \, \gzk(\cdot\,,x')=\delta_{x'},
\end{equation*} 
where $\delta_{x'}\in H^{-1}(\bbR)$ is the Dirac measure at $x'$. Note that formula (\ref{eq.kernelop}) is only valid for $f\in L^2(\bbR).$ If $f \in H^{-1}(\bbR)$, we just have to replace the integral by a duality product between $H^{-1}(\bbR)$ and $H^{1}(\bbR).$

In order to express $\gzk$, we first introduce the following basis of the solutions to the homogeneous Sturm--Liouville equation associated to (\ref{eq.E}), \textit{i.e.}, for $f=0:$
\begin{equation*}\label{eq.basis}
\czk(x):=\cosh\big(\thetazk(x)\,x\big) \quad \mbox{and} \quad \szk(x):=  \displaystyle \mu_{\zeta}(x) \frac{ \sinh\big(\thetazk(x)\,x\big)}{\thetazk(x)},
\end{equation*}
where
\begin{equation}\label{eq.deftheta}
\thetazk(x) := \sqrt{\Thetazk(x)}
= \left\lbrace \begin{array}{ll} \displaystyle \thetazk^{-} := \sqrt{k^2-\eps_0\,\mu_0 \,\zeta^2} & \mbox{if } x < 0,\\[4pt]
\thetazk^{+} := \sqrt{k^2-\eps_{\zeta}^{+}\,\mu_{\zeta}^{+} \,\zeta^2} & \mbox{if } x > 0,
\end{array}
\right.
\end{equation} 
and $\sqrt{\cdot}$ denotes the principal determination of the complex square root, \textit{i.e.},
\begin{equation}\label{eq.defrac}
\sqrt{z}=|z|^{\frac{1}{2}}\,\rme^{\rmi\arg z/2} \ \mbox{ for } \ \arg{z} \in (-\pi,\pi).
\end{equation}
The special feature of the above basis is that both $\czk(x)$ and $\szk(x)$ are analytic functions of $\zeta \in \bbC\setminus\{0\}$ for all $x$ (since they can be expanded as power series of $\Thetazk(x) = \thetazk(x)^2).$ In particular, they do not depend on the choice of the determination of $\sqrt{\cdot},$ whereas $\thetazk$ depends on it. Note that this definition of $\thetazk$ makes sense since $\Thetazk^{\pm}\in \bbC \setminus \bbR^{-}$ for all $\zeta\in \bbC\setminus \bbR$ and $k\in\bbR.$ This is obvious for $\Thetazk^-$ since $\zeta \notin \bbR$ implies $\zeta^2 \notin \bbR^+,$ hence $\Thetazk^- = k^2 - \eps_0\mu_0\,\zeta^2 \notin (-\infty,k^2].$ On the other hand, by (\ref{eq.def-eps}), (\ref{eq.def-mu}) and (\ref{eq.defTheta}), we have $\Thetazk^+ = k^2 - \eps_0\mu_0(\zeta - \Omegae^2 / \zeta)(\zeta - \Omegam^2 / \zeta),$ which cannot belong to $(-\infty,k^2]$ for the same reasons, since the imaginary parts of both quantities $\zeta - \Omegae^2 / \zeta$ and $\zeta - \Omegam^2 / \zeta$ have the same sign as $\Imag\zeta.$

\begin{proposition}\label{prop.green}
Let $\zeta\in\bbC\setminus \bbR$ and $k\in\bbR$. For all $f \in L^2(\bbR),$ the function $\bbC_{k,\zeta}f$ satisfies the integral representation {\rm (\ref{eq.kernelop})} where the Green function $\gzk$ of the Sturm--Liouville equation {\rm (\ref{eq.E})} is given by
\begin{equation}\label{eq.defgreen}
\gzk(x,x')= \frac{1}{\calW_{k,\zeta}} \ \psim\big(\min(x,x')\big) \ \psip\big(\max(x,x')\big),
\end{equation}
where
\begin{equation}\label{eq.coeff-psi}
\psipm(x) :=\czk(x)\mp \frac{\thetazk^{\pm}}{\mu_{\zeta}^{\pm}} \, \szk(x) \ \forall x \in \bbR
\quad\mbox{and}\quad
\calW_{k,\zeta} := \frac{\thetazk^{-}}{\mu_{\zeta}^{-}}+ \frac{\thetazk^{+}}{\mu_{\zeta}^{+}}
\end{equation}
is the (constant) Wronskian of $\psip$ and $\psim,$ \textit{i.e.}, $\mu_\zeta^{-1} (\psip \; \psim^\prime - \psip^\prime \; \psim)$.
\end{proposition}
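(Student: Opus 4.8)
The plan is to recognize (\ref{eq.defgreen}) as the textbook Green-function formula for a Sturm--Liouville operator on the line, assembled from one homogeneous solution that is square-integrable at $-\infty$ and one that is square-integrable at $+\infty$, glued by the jump dictated by the Dirac source. The whole content of the statement then reduces to checking that $\psim$ and $\psip$ from (\ref{eq.coeff-psi}) are exactly these two solutions and that the normalizing constant is the \emph{conjunct} $\calW_{k,\zeta}$. The one genuinely model-specific point is the jump of $\mu_{\zeta}$ at the interface $x=0$: the correct notion of solution is an $H^1(\bbR)$ function meeting the natural transmission conditions $[E]_{x=0}=0$ and $[\mu_{\zeta}^{-1}\,\rmd E/\rmd x]_{x=0}=0$, so that the distributional form of (\ref{eq.E}) produces no spurious Dirac mass at $0$.

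First I would record the role of the canonical basis. On each half-line $\mu_{\zeta}$ and $\Thetazk$ are constant, so $\czk$ and $\szk$ solve $\rmd^2 E/\rmd x^2=\Thetazk\,E$ there, and a direct computation of Cauchy data at $0$ gives $\czk(0)=1$, $\mu_{\zeta}^{-1}\czk'(0)=0$ and $\szk(0)=0$, $\mu_{\zeta}^{-1}\szk'(0)=1$, the flux $\mu_{\zeta}^{-1}\szk'=\czk$ being continuous even though $\szk'$ itself jumps from $\mu_{\zeta}^{-}$ to $\mu_{\zeta}^{+}$. Hence $\czk$ and $\szk$ satisfy the transmission conditions and are genuine global solutions on $\bbR$, and so are the combinations $\psim$ and $\psip$. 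Evaluating each on the relevant half-line collapses it to a pure exponential, $\psim(x)=\rme^{\thetazk^{-}x}$ for $x<0$ and $\psip(x)=\rme^{-\thetazk^{+}x}$ for $x>0$; since the branch (\ref{eq.defrac}) together with $\Thetazk^{\pm}\notin\bbR^{-}$ forces $\Real\thetazk^{\pm}>0$, the former decays at $-\infty$ and the latter at $+\infty$, so each $\gzk(\cdot,x')$ lies in $H^1(\bbR)$.

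Next I would assemble the kernel by the ansatz $\gzk(x,x')=W^{-1}\psim(x_{<})\psip(x_{>})$ with $x_{<}=\min(x,x')$ and $x_{>}=\max(x,x')$. Away from $x'$ it solves the homogeneous equation, and since $\psim$ and $\psip$ individually respect the transmission at $0$, so does $\gzk$; continuity at $x=x'$ is automatic. Integrating (\ref{eq.E}) with right-hand side $\delta_{x'}$ across $x'$ yields the flux jump $(\mu_{\zeta}^{-1}\partial_x\gzk)(x'^{-})-(\mu_{\zeta}^{-1}\partial_x\gzk)(x'^{+})=1$, whose left-hand side equals $W^{-1}\,\mu_{\zeta}^{-1}(\psim'\psip-\psim\psip')|_{x'}$. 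By Abel's identity the conjunct $\mu_{\zeta}^{-1}(\psim'\psip-\psim\psip')$ is constant, so the jump relation forces $W$ to be this constant, namely $\calW_{k,\zeta}$; evaluating it at $x=0$ with the Cauchy data above gives $\calW_{k,\zeta}=\thetazk^{-}/\mu_{\zeta}^{-}+\thetazk^{+}/\mu_{\zeta}^{+}$, matching (\ref{eq.coeff-psi}). Finally I would confirm that the integral operator with kernel $\gzk$ sends $f\in L^2(\bbR)$ to the $H^1(\bbR)$ solution of (\ref{eq.E}), hence coincides with $\bbC_{k,\zeta}$ by uniqueness.

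The main obstacle is less in the algebra than in two well-posedness points. One must ensure $\calW_{k,\zeta}\neq0$: otherwise $\psim$ and $\psip$ would be proportional, the ansatz would degenerate, and $\zeta$ would be an $L^2$-eigenvalue. This non-vanishing is exactly the Lax--Milgram solvability of (\ref{eq.E}) for $\zeta\in\bbC\setminus\bbR$ already invoked to define $\bbC_{k,\zeta}$, equivalently the linear independence of the left- and right-decaying solutions. One must also treat carefully the case where the source point $x'$ coincides with or straddles the interface, enforcing the transmission conditions at $0$ and the source condition at $x'$ simultaneously, and justify that the $H^{-1}$--$H^1$ duality extension of (\ref{eq.kernelop}) represents $\bbC_{k,\zeta}$ on all of $H^{-1}(\bbR)$.
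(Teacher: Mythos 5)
Your argument is correct and is precisely the classical construction that the paper deliberately omits (it cites Teschl and states only the result): the two exponentially decaying homogeneous solutions, the continuity of the flux $\mu_\zeta^{-1}E'$ across $x=0$, the jump condition at $x'$, and Abel's identity for the conjunct all check out, and your evaluation of $\calW_{k,\zeta}$ at $x=0$ from the Cauchy data of $\czk$ and $\szk$ reproduces (\ref{eq.coeff-psi}) exactly. Your closing remarks on the non-vanishing of $\calW_{k,\zeta}$ and on the $H^{-1}$--$H^1$ extension also match what the paper handles in the surrounding text, so nothing is missing.
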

We omit the proof of this classical result (see, e.g., \cite{Tes-14}). The expression (\ref{eq.defgreen}) of $\gzk$ involves another basis $\{\psim,\psip\}$ of the solutions to the homogeneous Sturm--Liouville equation associated to (\ref{eq.E}) which has the special feature to be evanescent as $x$ tends either to $-\infty$ or $+\infty.$ More precisely,
\begin{eqnarray}
\psim(x) & = & \left\lbrace
      \begin{array}{ll}
        \rme^{+\thetazk^{-}\, x}  & \mbox{ if } x<0,\\[2pt]
        A_{k,\zeta,-1}\ \rme^{+\thetazk^{+}\, x} + B_{k,\zeta,-1}\ \rme^{-\thetazk^{+}\, x} & \mbox{ if } x>0,
      \end{array}
\right. \label{eq.psim} \\
\psip(x) & = & \left\lbrace\begin{array}{ll}
        A_{k,\zeta,+1}\ \rme^{-\thetazk^{-}\, x} + B_{k,\zeta,+1}\ \rme^{+\thetazk^{-}\, x} & \mbox{ if } x<0,\\[2pt]
        \rme^{-\thetazk^{+}\,x}&  \mbox{ if } x>0,
       \end{array}
 \right. \label{eq.psip} 
\end{eqnarray}
where
\begin{equation*}
A_{k,\zeta,\pm 1} := \frac{1}{2} \left( 1 + \frac{\thetazk^\pm\,/\,\mu^\pm}{\thetazk^\mp\,/\,\mu^\mp}\right)
\quad\mbox\quad
B_{k,\zeta,\pm 1} := \frac{1}{2} \left( 1 - \frac{\thetazk^\pm\,/\,\mu^\pm}{\thetazk^\mp\,/\,\mu^\mp}\right).
\end{equation*}
Formulas (\ref{eq.psim}) and (\ref{eq.psip}) show that $\psipm$ decreases exponentially when $x\to \pm\infty,$ since $\Real(\thetazk^{\pm})>0.$ Note that the Wronskian $\calW_{k,\zeta}$ cannot vanish for $\zeta\in \bbC\setminus \bbR,$ otherwise the resolvent $R_k(\zeta)$ would be singular, which is impossible, because it is analytic in $\bbC\setminus \bbR$ since $\bbAk$ is self-adjoint.

% -------------------------------------------------------------------------------------------------------------------------------
\subsubsection{Integral expression of the resolvent}
We are now able to give an explicit expression of $R_k(\zeta),$ more precisely of the quantity $(R_k(\zeta)\bU,\bU)_{\indHx}$ involved in Stone's formulas $(\ref{eq.stone-ab})$ and $(\ref{eq.stone-a})$. Thanks to Proposition \ref{prop.res} and (\ref{eq.adjoints}), we can rewrite this quantity as 
\begin{equation*}
(R_k(\zeta)\bU,\bU)_{\indHx} = \left(\bbT_{k,\zeta}\,\bU,\bU\right)_{\indHx} + \left\langle \zeta \, \bbC_{k,\zeta}\,\bbS_{k,\zeta}\, \bU,\bbS_{k,\bar\zeta}\, \bU\right\rangle_{\bbR}, \quad\forall \bU \in \Hx,
\end{equation*}
where $\langle \cdot\,,\cdot \rangle_{\bbR}$ denotes the duality product between $H^{1}(\bbR)$ and $H^{-1}(\bbR).$ In order to express this duality product as an integral (and to avoid other difficulties which occur when applying Stone's formulas), we restrict ourselves to particular $\bU$ chosen in the \emph{dense} subspace of $\Hx$ defined by
\begin{equation}\label{eq.defDx}
\Dx := \left\{ \bU = (E,\bH,J,\bK)^{\top} \in \calD(\bbR) \times \calD(\bbR)^2 \times \calD(\bbR_+) \times \calD(\bbR_+)^2 \mid H_y(0)=0 \right\},
\end{equation}
where $\calD(\calO),$ for $\calO = \bbR$ or $\bbR_+,$ denotes the space of bump functions in $\calO$ (compactly supported in $\calO$ and smooth) and the condition $H_y(0)=0$ ensures that $\bbS_{k,\zeta}(\Dx) \subset L^2(\bbR).$ Using the integral representation (\ref{eq.kernelop}), the above formula becomes
\begin{equation}\label{eq.resint2}
(R_k(\zeta)\bU,\bU)_{\indHx} = 
\big(\bbT_{k,\zeta}\,\bU,\bU\big)_{\indHx} + 
\int_{\bbR^2}\zeta \, \gzk(x,x') \ \bbS_{k,\zeta}\,\bU(x) \ \overline{\bbS_{k,\bar\zeta}\,\bU(x')} \ \rmd x \, \rmd x', \quad\forall \; \bU \in \Dx.
\end{equation}

% ===============================================================================================================================
\subsection{Boundary values on the spectral real axis}
Before applying Stone's formulas $(\ref{eq.stone-ab})$ and $(\ref{eq.stone-a})$, we have to make clear the behavior of the resolvent $R_k(\zeta)$ as $\zeta$ tends to the real axis, hence the behavior of all quantities involved in the integral representation (\ref{eq.resint2}), in particular the Green function $\gzk$. This is the subject of this paragraph.

In the following, for any quantity $f_{\zeta}$ depending on the parameter $\zeta \in \bbC\setminus \bbR$, we choose to denote $f_{\lambda}$ the one-sided limit (if it exists) of $f_{\zeta}$ when $\zeta$ tends to $\lambda \in \bbR$ from the \emph{upper half-plane}, \textit{i.e.},
\begin{equation}\label{eq.convention}
f_{\lambda} :=  \lim_{\eta \searrow 0} f_{\lambda+\rmi\eta}.
\end{equation}
Notice that $\bbS_{k,\zeta}$ $\bbV_{k,\zeta}$  and $\bbT_{k,\zeta}$ defined in (\ref{eq.opS}), (\ref{eq.opV}) and (\ref{eq.opT}) have obviously one-sided limits $\bbS_{k,\lambda}$ $\bbV_{k,\lambda}$  and $\bbT_{k,\lambda}$ if $\lambda$ differs from 0 and $\pm\Omegam$ (they actually depend analytically on $\zeta$ outside these three values).

% -------------------------------------------------------------------------------------------------------------------------------
\subsubsection{Spectral zones and one-sided limit of $\thetazk(\cdot)$}
\label{s.spectr-zones}
The determination of the one-sided limit of $\thetazk(x)$ defined in (\ref{eq.deftheta}) requires us to identify the zones, in the $(k,\lambda)$ plane, where $\Thetalk^+$ or $\Thetalk^-$ (defined in (\ref{eq.defTheta})) is located on the branch cut $\bbR^-$ of the complex square root (\ref{eq.defrac}). Using (\ref{eq.def-eps}) and (\ref{eq.def-mu}), one computes that
\begin{equation*}
\Thetalk(x) =\left\lbrace
\begin{array}{ll} 
\Thetalk^{-} = k^2-\eps_0\,\mu_0 \,\lambda^2 & \mbox{if }x < 0,\\[4pt]
\Thetalk^{+} = \displaystyle \frac{-\eps_0\mu_0\, \lambda^4 + \left(k^2+\eps_0\mu_0(\Omegae^2+\Omegam^2)\right) \lambda^2 - \eps_0\mu_0 \, \Omegae^2\Omegam^2}{\lambda^2} & \mbox{if }x > 0.
\end{array}
\right.
\end{equation*}
We denote by $\lambda_0(k)$ (respectively, $\lambda_\scD(k)$ and $\lambda_\scI(k)$, with $0 < \lambda_\scI(k) \leq \lambda_\scD(k)$) the non-negative values of $\lambda$ for which $\Thetalk^{-}$ (respectively, $\Thetalk^{+})$ vanishes, \textit{i.e.},
\begin{eqnarray*}
\lambda_0(k) & := & |k|/\sqrt{\eps_0\mu_0}, \\[8pt]
\lambda_\scD(k) & := & \sqrt{\frac{k^2}{2\eps_0\mu_0} + \frac{\Omegae^2+\Omegam^2}{2} 
+ \sqrt{\left( \frac{k^2}{2\eps_0\mu_0} + \frac{\Omegae^2-\Omegam^2}{2} \right)^2 + \frac{k^2\,\Omegam^2}{\eps_0\mu_0}}}, \\
\lambda_\scI(k) & := & \sqrt{\frac{k^2}{2\eps_0\mu_0} + \frac{\Omegae^2+\Omegam^2}{2} 
-\sqrt{\left( \frac{k^2}{2\eps_0\mu_0} + \frac{\Omegae^2-\Omegam^2}{2} \right)^2 + \frac{k^2\,\Omegam^2}{\eps_0\mu_0}}},
\end{eqnarray*}
where $\lambda_\scI(k)$ and $\lambda_\scD(k)$ are related by $\lambda_\scI(k) \,\lambda_\scD(k) =  \Omegae\Omegam$. 
In the $(k,\lambda)$-plane, the graphs of these functions are curves through which the sign of $\Thetalk^{-}$ or $\Thetalk^{+}$ changes. The main properties of these functions are summarized in the following lemma, whose proof is obvious. 

\begin{lemma}\label{lem.disp}
For all $(k,\lambda)\in \bbR \times \bbR,$ we have
\begin{equation*}
\Big( \Thetalk^{-} < 0 \Longleftrightarrow |\lambda| > \lambda_0(k) \Big) \quad\mbox{and}\quad \Big( \Thetalk^{+} < 0 \Longleftrightarrow |\lambda| \notin \left[\lambda_\scI(k), \lambda_\scD(k) \right] \Big).
\end{equation*}
The function $k \mapsto \lambda_\scI(k)$ is a $\mathcal{C}^{\infty}$ strictly decreasing function on $\bbR^{+}$ whose range is $(0, \min(\Omegae,\Omegam)]$, whereas $k \mapsto \lambda_\scD(k)$ is a $\mathcal{C}^{\infty}$ strictly increasing function on $\bbR^{+}$ whose range is $[\max(\Omegae,\Omegam),\infty)$ and $\lambda_\scD(k) = \lambda_0(k) + O(k^{-1})$ as $k \to +\infty.$ Moreover, denoting $k_{\rm c}$ the unique non negative value of $k$ for which $\lambda_0(k) = \lambda_\scI(k)$, that is to say
$$
k_{\rm c} = \sqrt{\eps_0 \mu_0} \, \lambda_{\rm c} \quad \mbox{where} \quad \lambda_{\rm c} := \Omegae \, \Omegam/ \sqrt{\Omegae^2+\Omegam^2} \quad (\equiv \lambda_0(k_{\rm c}) = \lambda_\scI(k_{\rm c}) ),
$$
one has
\begin{equation*}
 0< \lambda_0(k) < \lambda_\scI(k) < \lambda_\scD(k) \mbox{ for } 0 < k < k_{\rm c} \ \mbox{ and }\ 0< \lambda_\scI(k)< \lambda_0(k) <\lambda_\scD(k) \mbox{ for } \ k> k_{\rm c}.
\end{equation*}
Finally, $\lambda_\scI(k) = \lambda_\scD(k)$ if and only if $k=0$ and $\Omegae=\Omegam.$ 
\end{lemma}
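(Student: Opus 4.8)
The plan is to reduce every assertion to elementary sign analysis of two quadratics, working throughout with the variable $s := \lambda^2 \geq 0$ rather than with $\lambda$ itself. First I would settle the two equivalences. For $\Thetalk^-$ the claim is immediate: $\Thetalk^- = k^2 - \eps_0\mu_0\lambda^2 < 0 \iff \lambda^2 > k^2/(\eps_0\mu_0) \iff |\lambda| > \lambda_0(k)$. For $\Thetalk^+$, since $\lambda^2 > 0$ its sign is that of its numerator, which as a function of $s$ is the quadratic $P(s) := -\eps_0\mu_0\, s^2 + \big(k^2 + \eps_0\mu_0(\Omegae^2+\Omegam^2)\big)s - \eps_0\mu_0\Omegae^2\Omegam^2$. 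Its leading coefficient is negative, so $P$ is a downward parabola; its roots are real (the discriminant turns out to be nonnegative, see below) and both positive (their sum and product, by Vieta, are $>0$), so they are exactly $\lambda_\scI^2 \leq \lambda_\scD^2$. Hence $P>0$ strictly between the roots and $P<0$ outside, which translates into $\Thetalk^+ < 0 \iff |\lambda| \notin [\lambda_\scI, \lambda_\scD]$. Vieta's product relation gives at once $\lambda_\scI^2\lambda_\scD^2 = \Omegae^2\Omegam^2$, i.e. $\lambda_\scI\lambda_\scD = \Omegae\Omegam$.

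Next I would establish the monotonicity and ranges, starting with $\lambda_\scD$. Writing $u := k^2/(2\eps_0\mu_0)$, a short computation collapses the discriminant in the closed form into $D(u) = u^2 + (\Omegae^2+\Omegam^2)u + \big(\tfrac{\Omegae^2-\Omegam^2}{2}\big)^2$, so that $\lambda_\scD^2 = u + \tfrac{\Omegae^2+\Omegam^2}{2} + \sqrt{D(u)}$. Both $u$ and $\sqrt{D(u)}$ are strictly increasing in $u$, hence in $k$ on $\bbR^+$, so $\lambda_\scD$ is strictly increasing; evaluating at $u=0$ gives $\lambda_\scD(0) = \max(\Omegae,\Omegam)$ and $\lambda_\scD \to \infty$, so its range is $[\max(\Omegae,\Omegam),\infty)$. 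The statement for $\lambda_\scI$ then comes for free from $\lambda_\scI = \Omegae\Omegam/\lambda_\scD$: it is strictly decreasing from $\lambda_\scI(0) = \Omegae\Omegam/\max(\Omegae,\Omegam) = \min(\Omegae,\Omegam)$ down to $0$, giving range $(0,\min(\Omegae,\Omegam)]$. Smoothness of both functions follows from the implicit function theorem applied to the smooth map $(k,\lambda)\mapsto\Thetalk^+$ at its roots, which are simple wherever $\lambda_\scI \neq \lambda_\scD$. For the asymptotics I would expand at $u\to\infty$: $\sqrt{D(u)} = u + \tfrac{\Omegae^2+\Omegam^2}{2} + O(u^{-1})$, whence $\lambda_\scD^2 = k^2/(\eps_0\mu_0) + O(1)$ and therefore $\lambda_\scD = \lambda_0(k) + O(k^{-1})$.

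It remains to locate $\lambda_0$ relative to $[\lambda_\scI,\lambda_\scD]$, which is the decisive computation. Substituting $s = \lambda_0(k)^2 = k^2/(\eps_0\mu_0)$ into $P$, the quartic contributions cancel and one finds $P\big(\lambda_0(k)^2\big) = k^2(\Omegae^2+\Omegam^2) - \eps_0\mu_0\Omegae^2\Omegam^2$, which is strictly increasing in $k^2$ and so has a unique positive root, namely $k = k_{\rm c} = \sqrt{\eps_0\mu_0}\,\lambda_{\rm c}$. Since $P>0$ exactly on $(\lambda_\scI^2,\lambda_\scD^2)$, the sign of $P\big(\lambda_0(k)^2\big)$ records whether $\lambda_0$ lies inside: for $k > k_{\rm c}$ it is positive, giving $\lambda_\scI < \lambda_0 < \lambda_\scD$; for $0 < k < k_{\rm c}$ it is negative, so $\lambda_0 \notin [\lambda_\scI,\lambda_\scD]$, and since $\lambda_0(k) \to 0 < \min(\Omegae,\Omegam) = \lim_{k\to 0^+}\lambda_\scI(k)$ while the only crossing of $\lambda_0$ and $\lambda_\scI$ is at $k_{\rm c}$, continuity forces $\lambda_0 < \lambda_\scI < \lambda_\scD$ throughout. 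Finally, the equality case is read off the discriminant in its original form $\big(\tfrac{k^2}{2\eps_0\mu_0}+\tfrac{\Omegae^2-\Omegam^2}{2}\big)^2 + \tfrac{k^2\Omegam^2}{\eps_0\mu_0}$, a sum of a square and a nonnegative term: it vanishes, i.e. $\lambda_\scI = \lambda_\scD$, only if $k=0$ and then $\Omegae = \Omegam$.

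All the computations are elementary, so there is no genuinely hard step; the only points deserving care are the simplification of the discriminant into $D(u)$, which makes the monotonicity of $\lambda_\scD$ transparent, and the bookkeeping needed to upgrade the weak conclusion $\lambda_0 \notin [\lambda_\scI,\lambda_\scD]$ into the \emph{strict} ordering $\lambda_0 < \lambda_\scI$ for $k < k_{\rm c}$, where one must combine the boundary behaviour as $k \to 0^+$ with the uniqueness of the crossing point $k_{\rm c}$.
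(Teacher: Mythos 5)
Your proof is correct. The paper itself offers no argument here (it declares the proof ``obvious''), so there is nothing to compare against; your elementary sign analysis of the quadratic $P(s)=-\eps_0\mu_0 s^2+\bigl(k^2+\eps_0\mu_0(\Omegae^2+\Omegam^2)\bigr)s-\eps_0\mu_0\Omegae^2\Omegam^2$ in $s=\lambda^2$ is exactly the verification that is being omitted, and every step checks out: the simplification of the discriminant to $D(u)=u^2+(\Omegae^2+\Omegam^2)u+\bigl(\tfrac{\Omegae^2-\Omegam^2}{2}\bigr)^2$ is right and does make the monotonicity of $\lambda_\scD$ transparent; Vieta gives $\lambda_\scI\lambda_\scD=\Omegae\Omegam$ as claimed; and the telescoping of the quartic terms in $P\bigl(\lambda_0(k)^2\bigr)=k^2(\Omegae^2+\Omegam^2)-\eps_0\mu_0\Omegae^2\Omegam^2$ is the clean way to locate $\lambda_0$ relative to $[\lambda_\scI,\lambda_\scD]$ and to identify $k_{\rm c}$. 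Two cosmetic remarks. First, for $0<k<k_{\rm c}$ you can avoid the continuity/no-crossing bookkeeping entirely: monotonicity gives $\lambda_0(k)<\lambda_0(k_{\rm c})=\lambda_{\rm c}=\lambda_\scI(k_{\rm c})<\lambda_\scI(k)$ directly. Second, your implicit-function-theorem argument for smoothness relies on the roots of $P$ being simple, which fails exactly at $k=0$ when $\Omegae=\Omegam$; if $\bbR^+$ is meant to include $0$, that endpoint should instead be handled from the closed-form expression $\lambda_\scD^2=u+\tfrac{\Omegae^2+\Omegam^2}{2}+\sqrt{u}\,\sqrt{u+2\Omegae^2}$ with $\sqrt{u}=k/\sqrt{2\eps_0\mu_0}$, which is visibly smooth in $k\ge 0$. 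Neither point affects the validity of the argument.
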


\begin{figure}
\begin{minipage}{7.5cm}%
\includegraphics[width=\textwidth]{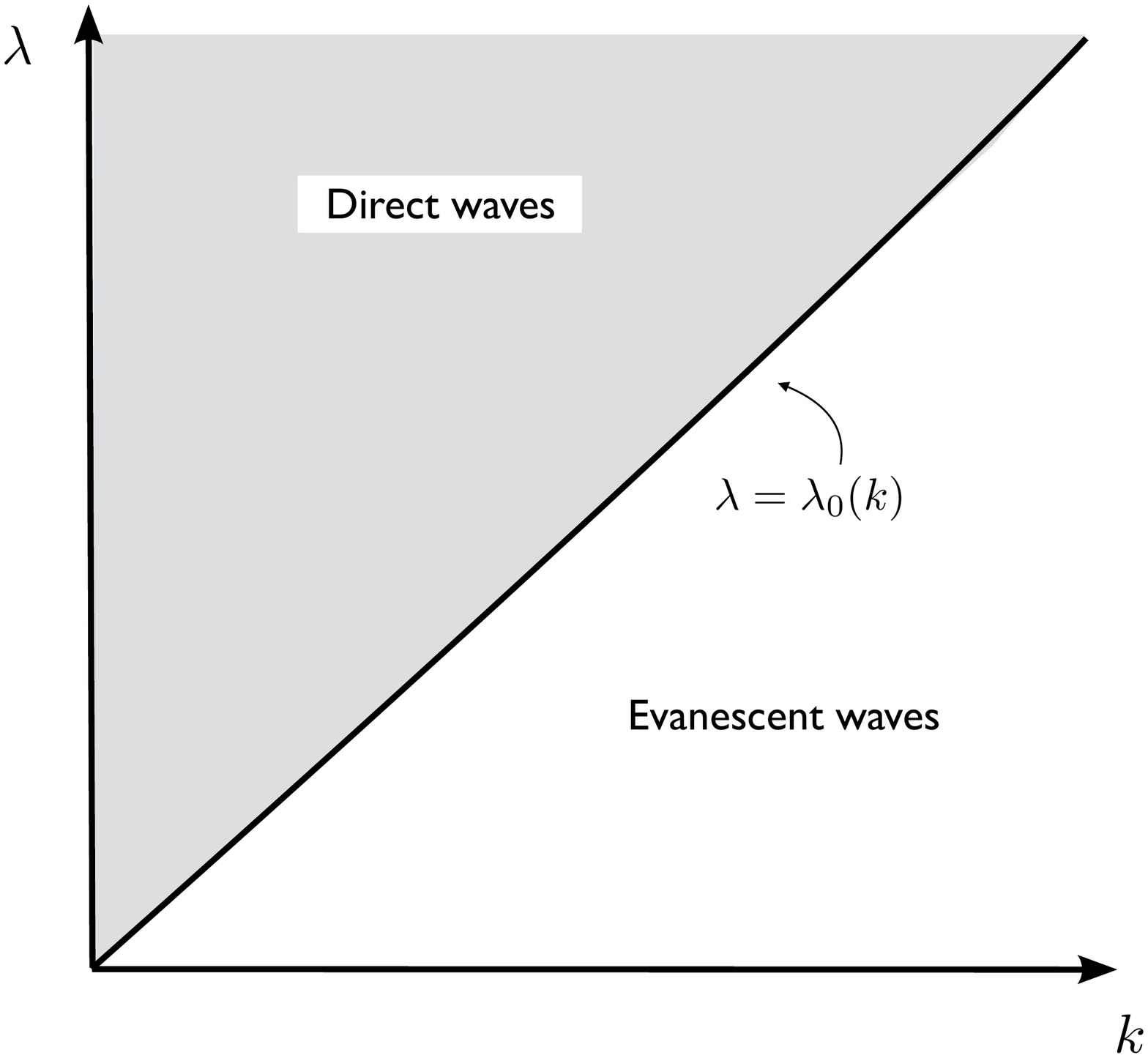}
\end{minipage}
 \hspace{1cm}
\begin{minipage}{7.5cm}%
\includegraphics[width=\textwidth]{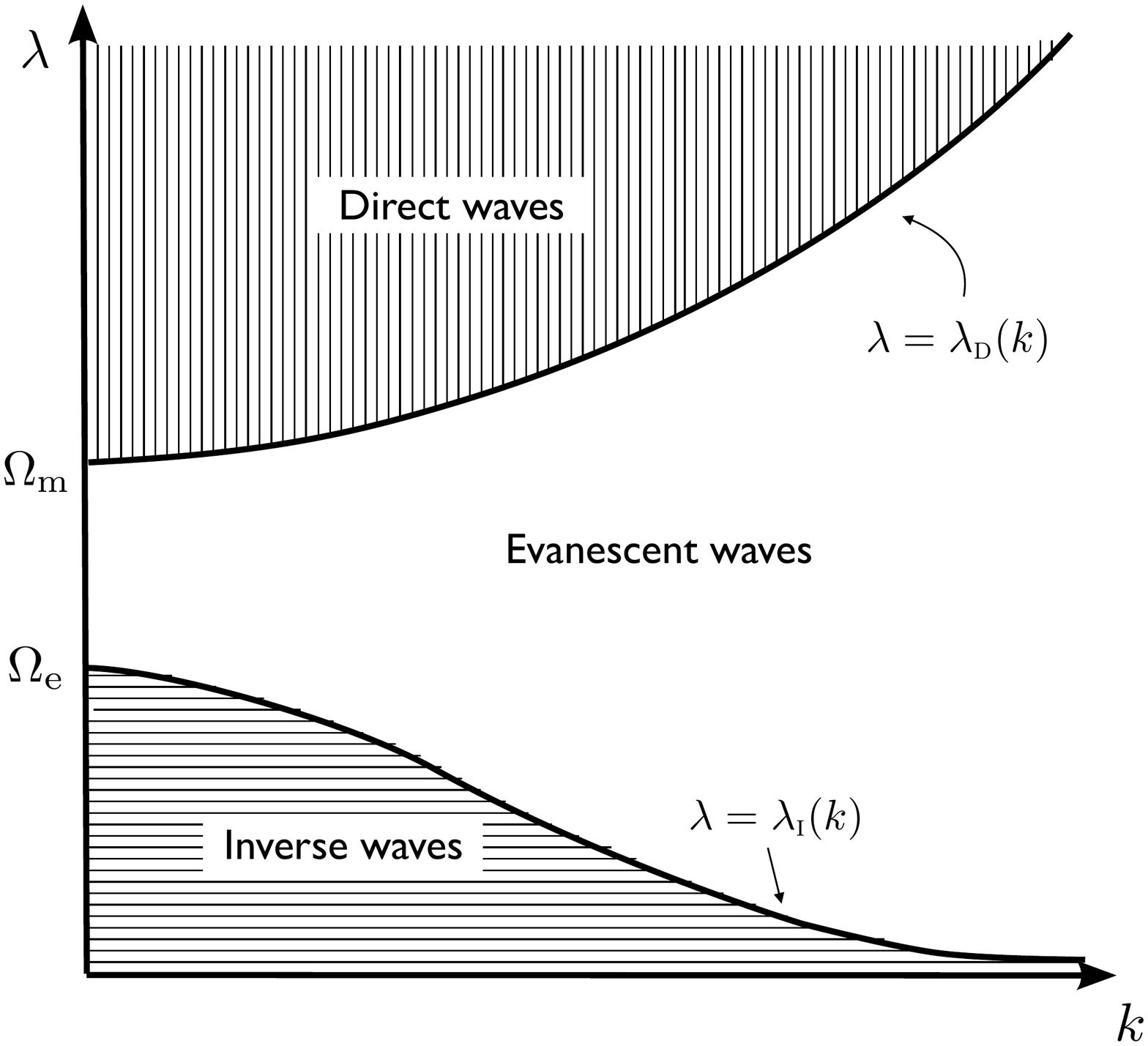}
\end{minipage}
\caption{Spectral zones associated to the vaccum (left) and the Drude material (right, case where $\Omegae<\Omegam).$ Grey color, vertical and horizontal hatched lines mean respectively propagation in the vacuum, direct propagation and inverse propagation in the Drude material.}
 \label{fig.dieldru}
\end{figure}
 
\begin{figure}
\begin{minipage}{7.0cm}%
\includegraphics[width=\textwidth]{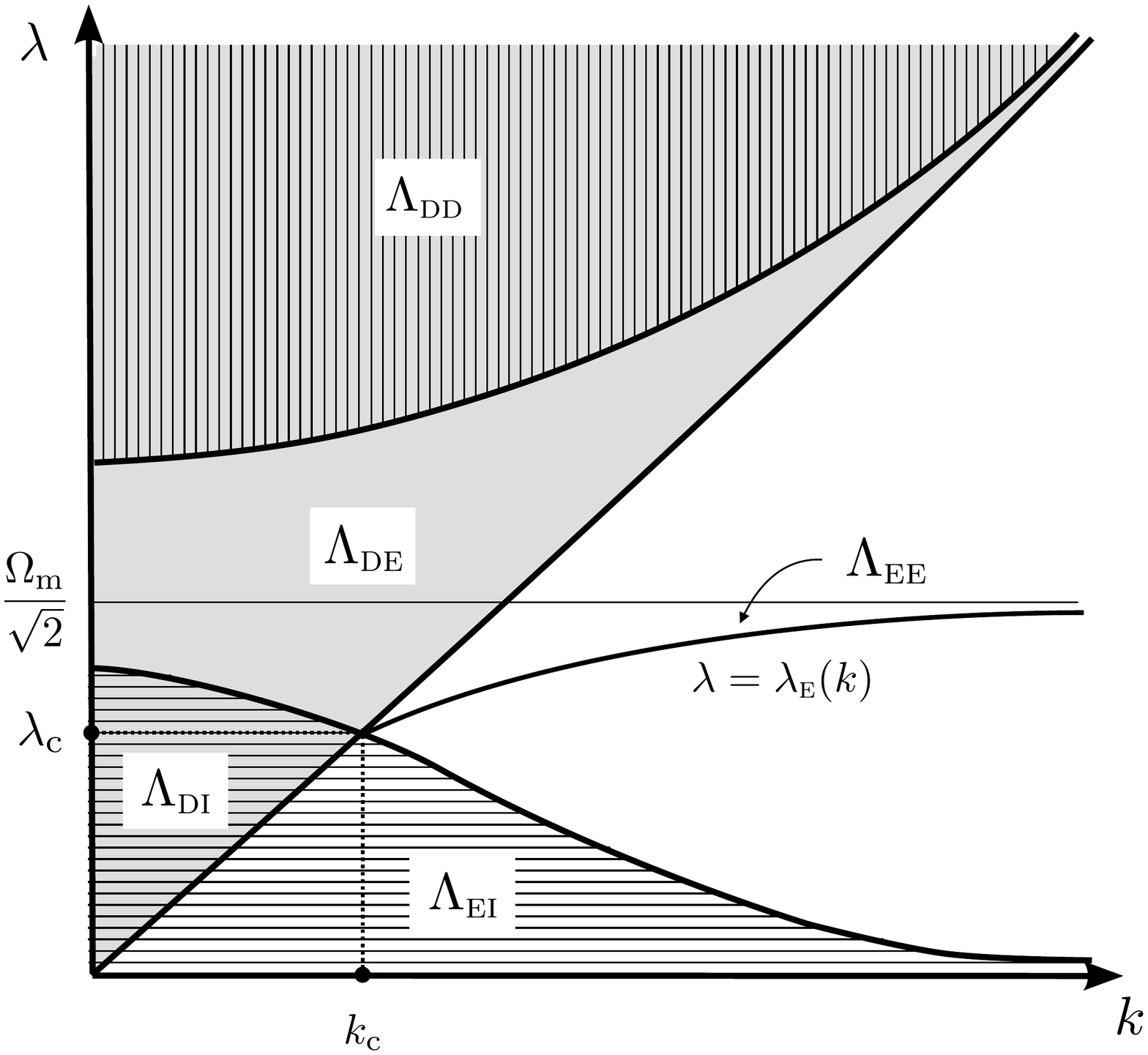}
\end{minipage}
 \hspace{0.5cm}
\begin{minipage}{8.0cm}%
\includegraphics[width=\textwidth]{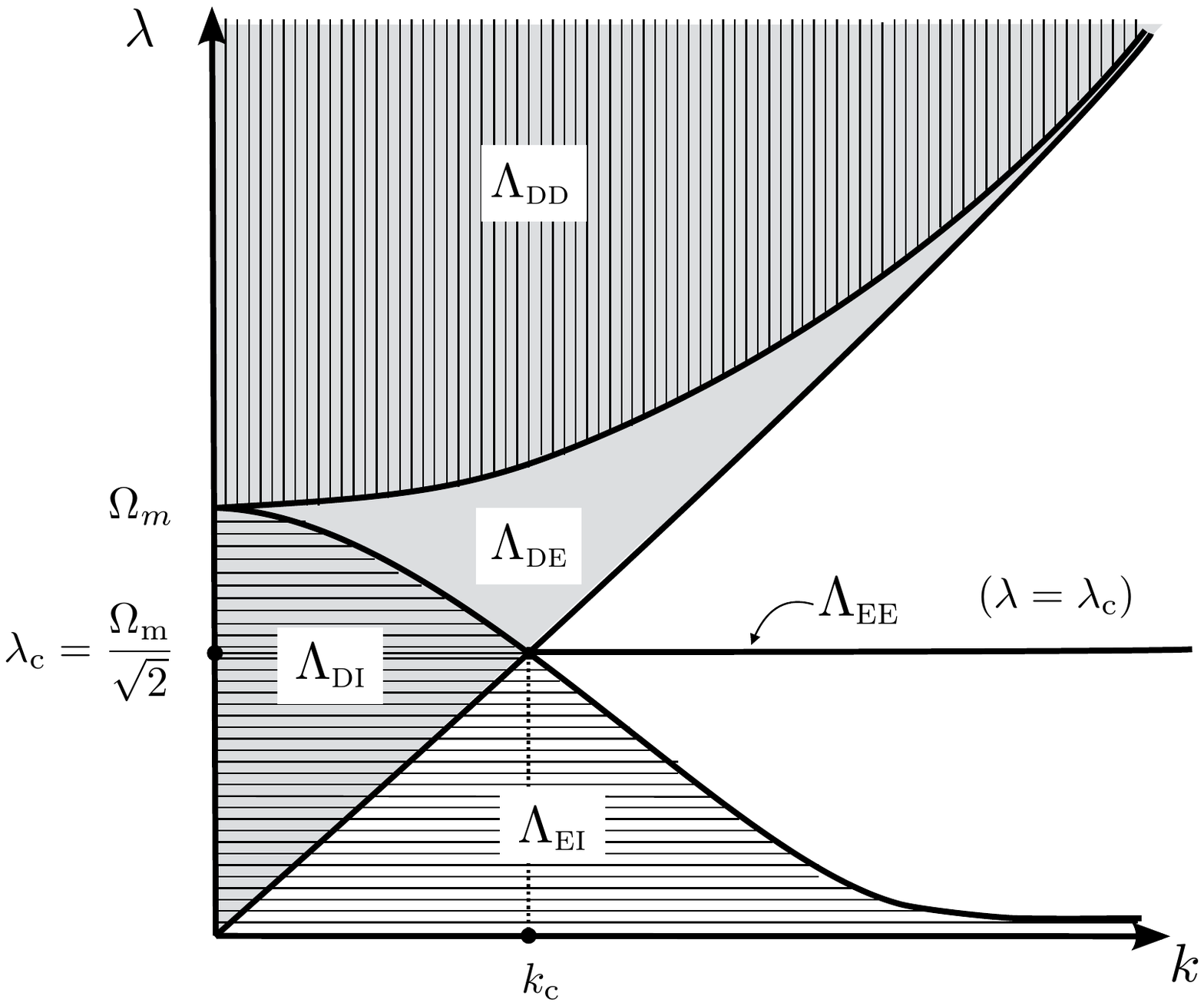}
\end{minipage}
\caption{Spectral zones associated to our medium  for $\Omegae<\Omegam$ (left) and $\Omegae=\Omegam$} (right). See caption of Figure \ref{fig.dieldru}.
\label{fig.speczones}
 \end{figure}

Figure \ref{fig.dieldru} illustrates this lemma in the quarter $(k,\lambda)$-plane $\bbR^+ \times \bbR^+$ (all zones are symmetric with respect to both $k$ and $\lambda$ axes, since $\Thetalk^{\pm}$ are even functions of $k$ and $\lambda).$ The signs of $\Theta^{+}_{k,\lambda}$ and $\Theta^{-}_{k,\lambda}$ indicate the regime of vibration in the vacuum and in the Drude material: \emph{propagative} or \emph{evanescent}. As it will be made clear in \S\ref{s.phys-spect-zones}, two kinds of propagative waves occur in the Drude material, which will be called \emph{direct} and \emph{inverse}, whereas propagative waves can only be \emph{direct} in the vacuum. This explains the choice of the indices $\scD,$ $\scI$ and $\scE$ used hereafter, which mean respectively \emph{direct}, \emph{inverse} and \emph{evanescent}.

Figure \ref{fig.speczones}, constructed as the superposition of the left and right graphics of Figure \ref{fig.dieldru}, then shows the coupling of both half-spaces. This leads us divide the $(k,\lambda)$-plane in several \emph{spectral zones} defined as follows (see \S\ref{s.phys-spect-zones} for the justification of the notations):
\begin{equation}
\begin{array}{rll}
\zDD & := & \left\{(k,\lambda) \in \bbR^2 \mid |\lambda| > \lambda_\scD(k)\right\},\\[4pt]
\zDE & := & \left\{(k,\lambda) \in \bbR^2 \mid |\lambda| \neq \Omegam \mbox{ and } \max\big(\lambda_0(k),\lambda_\scI(k)\big) < |\lambda| < \lambda_\scD(k)\right\},\\[4pt]
\zDI & := & \left\{(k,\lambda) \in \bbR^2 \mid |k| < k_{\rm c} \mbox{ and } \lambda_0(k) < |\lambda| < \lambda_\scI(k)\right\}, \\[4pt]
\zEI & := & \left\{(k,\lambda) \in \bbR^2 \mid 0 < |\lambda| < \min\big(\lambda_0(k),\lambda_\scI(k)\big)\right\}.
\end{array}
\label{eq.defspeczones}
\end{equation}
Note that the eigenvalues $\lambda = 0$ and  $\lambda = \pm\Omegam$ of $\bbAk$ (see Proposition \ref{p.vp-de-Ak}) are excluded from these zones. 

With our choice (\ref{eq.defrac}) for the complex square root, we notice that if $\Theta^{\pm}_{k,\lambda}:=\lim_{\eta\searrow 0} \Theta^{\pm}_{k,\lambda+\rmi \eta}$ is positive, then $\theta^{\pm}_{k,\lambda}$ is a positive real number (more precisely, $\theta^{\pm}_{k,\lambda}=|\Theta^{\pm}_{k,\lambda}|^{1/2}).$ On the other hand, if $\Theta^{\pm}_{k,\lambda}$ is negative, then $\theta^{\pm}_{k,\lambda}$ is purely imaginary and its sign coincides with the sign of $\Imag(\Theta_{k,\lambda+\rmi\eta}^{\pm})$ for small positive $\eta.$ From (\ref{eq.def-eps}), (\ref{eq.def-mu}) and (\ref{eq.defTheta}), one computes that
\begin{equation*}
\Imag\Thetazk^- = -\eps_0\mu_0\, \Imag(\zeta^2) 
\quad\mbox{and}\quad
\Imag\Thetazk^+ = - \frac{\eps_0\mu_0}{|\zeta|^4}\, \Imag(\zeta^2) \ \Big( |\zeta|^4 - \Omegae^2\,\Omegam^2 \Big). 
\end{equation*}
Note that for $\zeta=\lambda+\rmi\eta$, $\Imag(\zeta^2)$ has the same sign as $\lambda$, as a consequence $\Imag(\Theta^{-}_{k,\lambda+\rmi\eta})$ has the opposite sign of $\lambda$. Moreover, the sign of $\lambda^4-\Omegae^2\Omegam^2$, the limit as $\eta \searrow 0$ of $|\zeta|^4-\Omegae^2\Omegam^2$ when $\zeta=\lambda+\rmi \eta$, depends on the position of $\left|\lambda\right|$ with respect $\sqrt{\Omegae\,\Omegam}$.
From Lemma \ref{lem.disp}, we deduce that 
$$\lambda_\scI(k) \leq \min(\Omegae,\Omegam) \leq \sqrt{\Omegae\,\Omegam} \leq \max(\Omegae,\Omegam) \leq \lambda_\scD(k),$$ 
thus the sign of $\lambda^4-\Omegae^2\Omegam^2$ is positive in the spectral zone $\zDD$ (where $\lambda >\lambda_\scD(k)$) and negative (where $\lambda <\lambda_\scI(k)$) in the spectral zones $\zDI$ and $\zEI$. Finally, we obtain
\begin{eqnarray}
\thetalk^-  & = & \left\lbrace \begin{array}{ll} 
- \rmi \,\sgn(\lambda)\,|\Thetalk^-|^{1/2} & \mbox{if } (k,\lambda) \in \zDI \cup \zDE \cup \zDD,\\[4pt]
|\Thetalk^-|^{1/2} & \mbox{otherwise},
\end{array}\right.
\label{eq.expr-thetam} \\
\thetalk^+ & = & \left\lbrace \begin{array}{ll} 
+ \rmi \,\sgn(\lambda)\,|\Thetalk^+|^{1/2} & \mbox{if } (k,\lambda) \in \zEI \cup \zDI,\\[4pt]
- \rmi \,\sgn(\lambda)\,|\Thetalk^+|^{1/2} & \mbox{if } (k,\lambda) \in \zDD,\\[4pt]
|\Thetalk^+|^{1/2} & \mbox{otherwise}.
\end{array}\right.
\label{eq.expr-thetap}
\end{eqnarray} 

In addition to the spectral zones defined above, we have to investigate the possible singularities of the Green function in the $(k,\lambda)$-plane, that is, the pairs $(k,\lambda)$ for which the one-sided limit $\calW_{k,\lambda}$ of $\calW_{k,\lambda+\rmi \eta}$ (see (\ref{eq.coeff-psi})) vanishes. Hence we have to solve the following \emph{dispersion equation}, seen as an equation in the $(k,\lambda)$-plane:
\begin{equation}\label{eq.disp}
\frac{\thetalk^{-}}{\mu_{\lambda}^{-}}+ \frac{\thetalk^{+}}{\mu_{\lambda}^{+}} = 0.
\end{equation}

\begin{lemma}\label{lem.sing}
Define $K := \eps_0 \mu_0 \,(\Omegam^2-\Omegae^2).$ The solutions $(k,\lambda)$ to {\rm (\ref{eq.disp})} are given by 
$$\overline{\zEE}=\zEE \cup \{( k_{\rm c},  \lambda_{\rm c}), ( k_{\rm c},  -\lambda_{\rm c}),( -k_{\rm c},  \lambda_{\rm c}), ( -k_{\rm c},  -\lambda_{\rm c}) \}$$ 
where (see Figure {\rm \ref{fig.speczones}} for an illustration)
\begin{equation}\label{eq.defspeczonesZEE}
\zEE := \left\{ (k,\lambda)\in \bbR^2 \mid |k| > k_{\rm c} \mbox{ and } |\lambda| = \lambda_\scE(k) \right\}.
\end{equation}
The critical  value $k_{\rm c}$ has been defined in Lemma {\rm \ref{lem.disp}} and
\begin{equation*}
\lambda_\scE(k) := 
\left\{ \begin{array}{ll}
\displaystyle \Omegam \ \sqrt{\frac{1}{2} + \frac{k^2}{K} - \sgn(K)\,\sqrt{\frac{1}{4} + \frac{k^4}{K^2}}} & \mbox{if } K \neq 0, \\
\displaystyle \frac{\Omegam}{\sqrt{2}} & \mbox{if } K = 0.
\end{array}\right.
\end{equation*}
The function $k \mapsto \lambda_\scE(k)$ is strictly decreasing on $[k_{\rm c},+\infty)$ if $K < 0,$ \textit{i.e.}, $\Omegam < \Omegae$ (respectively, strictly increasing if $K > 0,$ \textit{i.e.}, $\Omegam > \Omegae).$ Moreover $\lambda_\scE(k_{\rm c}) = \lambda_{\rm c} := \lambda_0(k_{\rm c}) = \lambda_\scI(k_{\rm c})$ and $\lambda_\scE(k) = \Omegam / \sqrt{2} + O(k^{-2})$ as $k \to +\infty.$
\end{lemma}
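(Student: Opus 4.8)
The plan is to convert the transcendental dispersion equation (\ref{eq.disp}), i.e. $\calW_{k,\lambda}=0$, into a purely algebraic one by squaring, solve the resulting biquadratic, and then use the sign data of \S\ref{s.spectr-zones} to throw away the root introduced spuriously by the squaring and to pin down the admissible range of $k$. Concretely, I would rewrite (\ref{eq.disp}) as $\thetalk^-/\mu_\lambda^- = -\,\thetalk^+/\mu_\lambda^+$ and square it. Since $(\thetalk^\pm)^2=\Thetalk^\pm$ whatever determination is selected in (\ref{eq.expr-thetam})--(\ref{eq.expr-thetap}), this gives the necessary algebraic condition $\Thetalk^-/(\mu_\lambda^-)^2 = \Thetalk^+/(\mu_\lambda^+)^2$. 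Inserting (\ref{eq.def-mu}) and (\ref{eq.defTheta}), writing $c:=\eps_0\mu_0$ and $s:=\lambda^2$, and clearing denominators reduces this, after a routine computation, to the quadratic
$$ Q(s) := K\,s^2 - \Omegam^2\,(2k^2+K)\,s + k^2\,\Omegam^4 = 0, $$
whose roots are $s_\pm = \Omegam^2\big(\tfrac12 + \tfrac{k^2}{K} \pm \sqrt{\tfrac14 + \tfrac{k^4}{K^2}}\,\big)$ when $K\neq0$, and $s=\Omegam^2/2$ in the degenerate linear case $K=0$. Comparing with the statement, the root carrying the sign $-\sgn(K)$ is exactly $\lambda_\scE(k)^2$, so the whole task is to prove that this root, and only it, and only for $|k|>k_{\rm c}$, comes from a genuine solution of the unsquared equation.

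The delicate part, and the main obstacle, is precisely this sign selection. By (\ref{eq.expr-thetam})--(\ref{eq.expr-thetap}) each $\thetalk^\pm$ is either a positive real or a purely imaginary number, while $\mu_\lambda^\pm$ are real; hence the two summands in (\ref{eq.disp}) can be opposite and nonzero only if they are of the same type. If one is real and the other imaginary, (\ref{eq.disp}) forces $\Thetalk^-=\Thetalk^+=0$, which occurs only at the corner points $(\pm k_{\rm c},\pm\lambda_{\rm c})$. The case where both are purely imaginary (zones $\zDI$ and $\zDD$) is ruled out by a direct inspection: in $\zDI$ one has $\mu_\lambda^+<0$ and the two terms have the same sign, in $\zDD$ one has $\mu_\lambda^+>0$ and again they cannot cancel, so no interior zero arises.

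There remains the case where both $\thetalk^\pm$ are positive reals, i.e. $\lambda_\scI(k)\le|\lambda|<\lambda_0(k)$ (both fields evanescent), which is nonempty only when $\lambda_\scI(k)<\lambda_0(k)$, that is $|k|>k_{\rm c}$ by Lemma \ref{lem.disp}. Since $\thetalk^-/\mu_\lambda^->0$ there, cancellation additionally demands $\mu_\lambda^+<0$, i.e. $|\lambda|<\Omegam$. In this regime both sides of $\thetalk^-/\mu_\lambda^- = \thetalk^+/|\mu_\lambda^+|$ are positive, so squaring is reversible and $Q(s)=0$ becomes \emph{equivalent} to (\ref{eq.disp}); the genuine solutions are therefore exactly the roots of $Q$ lying in the admissible band $\lambda_\scI(k)^2\le s<\min\big(\lambda_0(k)^2,\Omegam^2\big)$.

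It then remains to locate the roots of $Q$ with respect to these thresholds. Evaluating $Q$ at $s=\lambda_0(k)^2=k^2/c$ gives, after simplification,
$$ Q\big(\lambda_0(k)^2\big) = \frac{k^2(\Omegae^2+\Omegam^2)}{c}\,\big(k_{\rm c}^2 - k^2\big), $$
which is negative precisely for $|k|>k_{\rm c}$; and using the identity $Q(s)=(k^2-cs)(s-\Omegam^2)^2-s\,\Thetalk^+ s$ together with $\Thetalk^+=0$ at $s=\lambda_\scI(k)^2$ yields $Q(\lambda_\scI(k)^2)=(k^2-c\,\lambda_\scI(k)^2)(\lambda_\scI(k)^2-\Omegam^2)^2$, which is positive for $|k|>k_{\rm c}$ since then $\lambda_\scI(k)<\lambda_0(k)$ and $\lambda_\scI(k)<\Omegam$. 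By the intermediate value theorem $Q$ thus has exactly one root in $(\lambda_\scI(k)^2,\lambda_0(k)^2)$; distinguishing $K>0$ (upward parabola) from $K<0$ (downward one) shows that this in-band root is the one with sign $-\sgn(K)$, namely $\lambda_\scE(k)^2$, while the other root falls outside the band and is spurious. A short direct estimate also gives $\lambda_\scE(k)^2<\Omegam^2$, so $\mu_\lambda^+<0$ indeed holds and the root is genuine. For $|k|<k_{\rm c}$ the admissible band is empty (again Lemma \ref{lem.disp}), so there is no solution, whereas for $|k|=k_{\rm c}$ it degenerates to the single point $|\lambda|=\lambda_{\rm c}$, where $\Thetalk^\pm=0$ and (\ref{eq.disp}) holds trivially. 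Finally, invoking the evenness of all quantities in $k$ and in $\lambda$ to pass from the first quadrant to the whole plane delivers the solution set $\overline{\zEE}$ announced in the lemma.
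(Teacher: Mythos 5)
Your proof is correct and follows essentially the same strategy as the paper: square the dispersion relation (\ref{eq.disp}) to obtain the necessary algebraic condition $\Thetalk^{-}/(\mu_\lambda^{-})^2=\Thetalk^{+}/(\mu_\lambda^{+})^2$, rule out the zones $\zDD,\zDI,\zDE,\zEI$ by the sign/type analysis of $\thetalk^{\pm}$, reduce to the quadratic $Q_k(\lambda^2)=0$ in the doubly evanescent band with $\mu_\lambda^{+}<0$, and select the admissible root. The one step where you genuinely diverge is the root selection: the paper computes $X_k^{\pm}$ explicitly and argues via their monotonicity and ranges (e.g.\ that the curve $\lambda=(X_k^{+})^{1/2}$ meets $\lambda=\lambda_\scI(k)$ only at the critical points), whereas you evaluate $Q$ at the band endpoints $\lambda_\scI(k)^2$ and $\lambda_0(k)^2$ (and at $\Omegam^2$) and invoke the intermediate value theorem plus the orientation of the parabola; your sign computations $Q(\lambda_0(k)^2)=\frac{k^2(\Omegae^2+\Omegam^2)}{\eps_0\mu_0}(k_{\rm c}^2-k^2)$ and $Q(\lambda_\scI(k)^2)=(k^2-\eps_0\mu_0\lambda_\scI(k)^2)(\lambda_\scI(k)^2-\Omegam^2)^2$ check out, and this variant has the advantage of simultaneously explaining why $|k|>k_{\rm c}$ is exactly the threshold and why the in-band root is the one carrying the sign $-\sgn(K)$, without tracking the ranges of $X_k^{\pm}$. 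One omission: you do not address the final assertions of the lemma (strict monotonicity of $\lambda_\scE$, the value $\lambda_\scE(k_{\rm c})=\lambda_{\rm c}$, and the expansion $\lambda_\scE(k)=\Omegam/\sqrt{2}+O(k^{-2})$); these follow by elementary calculus from the explicit formula for $X_k^{\mp\sgn(K)}$ (and the paper itself only asserts them), but a complete write-up should include at least a sentence to that effect, in particular because $\lambda_\scE(k)\to\lambda_{\rm c}$ as $k\to k_{\rm c}^{+}$ is what identifies the full solution set with the \emph{closure} $\overline{\zEE}$ rather than with $\zEE$ together with four isolated points.
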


\begin{proof}
First notice that if $(k,\lambda)$ is a solution to (\ref{eq.disp}), then
\begin{equation}\label{eq.disp-carre}
 \left(\frac{\thetalk^{-}}{\mu_{\lambda}^{-}} +\frac{\thetalk^{+}}{\mu_{\lambda}^{+}} \right) \, \left(\frac{\thetalk^{-}}{\mu_{\lambda}^{-}} -\frac{\thetalk^{+}}{\mu_{\lambda}^{+}} \right)=0  \quad \Longleftrightarrow\quad \frac{\Thetalk^{-}}{(\mu_{\lambda}^{-})^2} = \frac{\Thetalk^{+}}{(\mu_{\lambda}^{+})^2} \,.
\end{equation}
The proof is made of two steps.\\[6pt]
{\it Step 1.} We first show that $\overline{\zEE} \subset \bbR^2 \setminus \big( \zDD\cup \zDE \cup \zDI \cup \zEI\big)$. \\[6pt]
Indeed, (\ref{eq.disp-carre}) implies that $\Thetalk^{-}$ and $\Thetalk^{+}$ either have the same sign or vanish simultaneously. Hence
\begin{itemize}
\item  $\zDE$ and $\zEI$ contain no solution since $\Thetalk^{-}$ and $\Thetalk^{+}$ have opposite signs in these zones (first statement of Lemma \ref{lem.disp}).
\item If $(k,\lambda) \in \zDD,$ $\thetalk^\pm = -\rmi \,\sgn(\lambda)\,|\Thetalk^\pm |^{1/2} $ (from (\ref{eq.expr-thetam}) and (\ref{eq.expr-thetap})) and $\mu_{\lambda}^{\pm} > 0$ (since $|\lambda| > \Omegam)$. This shows that $(k,\lambda)$ cannot satisfy (\ref{eq.disp}). A similar argument works for $\zDI$ ($\mu_{\lambda}^{-}$ and $\mu_{\lambda}^{+}$ have opposite signs in this zone, but $\Imag(\thetalk^{+})$ and $\Imag(\thetalk^{-})$ also have opposite signs).
\end{itemize}
On the other hand, the curves $\lambda = \lambda_\scE(k),$  $\lambda = \lambda_\scD(k)$ and $\lambda = \lambda_\scI(k)$ contain no solution to (\ref{eq.disp}), except the \emph{critical points} $(k_{\rm c},\lambda_{\rm c}),$ $(-k_{\rm c},\lambda_{\rm c}),$ $(k_{\rm c},-\lambda_{\rm c})$ and $(-k_{\rm c},-\lambda_{\rm c})$ where both $\Thetalk^{\pm}$ vanish. 
To sum up, apart from the critical points, the possible solutions to (\ref{eq.disp}) are located outside the closure of all previously defined zones, that is, in the ``white area'' of Figure \ref{fig.speczones}.\\[6pt]
{\it Step 2.} In this ``white area'', $\thetalk^\pm > 0,$ so solutions may occur only if $\mu_{\lambda}^{-}$ and $\mu_{\lambda}^{+}$ have opposite signs, that is, in the sub-area located in $|\lambda| < \Omegam.$ In this sub-area, (\ref{eq.disp}) and (\ref{eq.disp-carre}) are equivalent since $\thetalk^{-}/\mu_{\lambda}^{-}+\thetalk^{+}/\mu_{\lambda}^{-} \neq 0$. Using (\ref{eq.def-eps}), (\ref{eq.def-mu}) and (\ref{eq.defTheta}), (\ref{eq.disp-carre}) can be written as
\begin{equation}
Q_k(\lambda^2) = 0 \quad\mbox{where}\quad Q_k(X) := K\,X^2 - \Omegam^2\,(2k^2 + K)\,X + k^2\Omegam^4,
\label{eq.Def-Qk}
\end{equation}
from which we infer that
\begin{equation*}
\lambda^2 = X_k^\pm := \Omegam^2 \left( \frac{1}{2} + \frac{k^2}{K} \pm \sqrt{\frac{1}{4} + \frac{k^4}{K^2}} \right).
\end{equation*}
If $K<0,$ the only positive root is $X_k^+$ which is strictly decreasing on $[k_{\rm c},+\infty)$ with range $(\Omegam^2 / 2,\lambda_{\rm c}^2].$ Moreover, the curve $\lambda = (X_k^+)^{1/2}$ crosses $\lambda = \lambda_\scI(k)$ only at $(k_{\rm c},\lambda_{\rm c}),$ which shows that the former is located in the above mentioned sub-area. On the other hand, if $K>0,$ both roots $X_k^-$ and $X_k^+$ are positive, but $X_k^+ > \Omegam^2,$ so the curve $\lambda = (X_k^+)^{1/2}$ is now outside the sub-area. The other root yields the only solution in the admissible sub-area and it is strictly increasing on $[k_{\rm c},+\infty)$ with range $[\lambda_{\rm c}^2, \Omegam^2 / 2)$. Finally, if K=0, the only root of $Q_k(X)$ is $X = \Omegam^2 / 2 = \lambda_{\rm c}^2.$  
\end{proof}

% -------------------------------------------------------------------------------------------------------------------------------
\subsubsection{Physical interpretation of the spectral zones}
\label{s.phys-spect-zones}
The various zones introduced above are related to various types of waves in both media, which can be either propagative or evanescent. As already mentioned, the indices $\scD,$ $\scI$ and $\scE$ we chose to qualify these zones stand respectively for \emph{direct}, \emph{inverse} and \emph{evanescent}. The first two, $\scD$ and $\scI,$ are related to propagative waves which can be either direct or inverse waves (in the Drude medium), whereas $\scE$ means evanescent, that is, absence of propagation. As shown below, for each pair of indices characterizing the various zones $\zDD,$ $\zDE,$ $\zDI,$ $\zEI$ and $\zEE,$ the former indicates the behavior of the vacuum $(x<0)$ and the latter, that of the Drude material $(x>0).$

To see this, substitute formulas (\ref{eq.expr-thetam}) and (\ref{eq.expr-thetap}) in (\ref{eq.psim}) and (\ref{eq.psip}), which yields the one-sided limits $\psilpm$ of $\psipm$ in the sense of (\ref{eq.convention}). The aim of this section is to show the physical interpretation of these functions as superpositions of elementary waves. For simplicity, we restrict ourselves to $\lambda>0$ when interpreting their direction of propagation. 

First notice that in each half line $x<0$ and $x>0$, function $\rme^{-\rmi \lambda t}\, \psilpm$ is a linear combination of terms of the form $u_{\kappa}=\rme^{\rmi (\kappa x-\lambda t)}, \mbox{ with }\, \kappa\in \bbR$ or $\kappa\in \rmi \, \bbR$, where for a fixed $k$, $\kappa$ is a certain function of $\lambda$, denoted $\kappa=\kappa(\lambda)$.
\begin{itemize}
\item When $\kappa$ is purely imaginary, $u_{\kappa}$ is an \emph{evanescent} wave in the direction $x>0$ or $x<0$.
\item When $\kappa$ is real, $u_{\kappa}$ is a \emph{propagative} wave whose phase velocity is given by
\begin{equation}\label{eq.defphasevel}
v_{\phi}(\lambda) := \frac{\lambda}{\kappa}.
\end{equation}
Locally (if $\rmd \kappa(\lambda)/\rmd \lambda\neq 0$), $\kappa=\kappa(\lambda)$ defines $\lambda$ as a function of $\kappa$ and the group velocity of $u_{\kappa}$ is defined by
\begin{equation}\label{eq.defphasegroup}
v_{g}(\kappa) := \frac{\rmd \lambda(\kappa)}{\rmd \kappa}=\Big(\frac{\rmd \kappa(\lambda)}{\rmd \lambda}\Big)^{-1}.
\end{equation}
If the product $v_{g}\, v_{\phi}$ is positive, (\textit{i.e.}, when the group and phase velocities point in the same direction), one says that $u_{\kappa}$ is a \emph{direct propagative} wave. If the product $v_{g}\, v_{\phi}$ is negative, (\textit{i.e.}, when the group and phase velocities point in opposite directions), one says that $u_{\kappa}$ is an \emph{inverse propagative} wave. 

One uses also the notion of group velocity, which characterizes \emph{the direction of propagation} (the direction of the energy transport), to distinguish between incoming and outgoing propagative waves. One says that a propagative wave $u_{\kappa}$ is \emph{incoming} (respectively, \emph{outgoing}) in the region $\pm x>0$ if its group velocity points towards the interface (respectively, towards $\pm \infty$).
\end{itemize}

In our case, for a real  $\kappa$, one denotes by $v_{\phi}^{\pm}$ and $v_{g}^{\pm}$, the phase and group velocities of a propagative wave $u_{\kappa}$ in the region $\pm x>0$.
Let us derive a general formula for the product $v_{g}^{\pm}\, v_{\phi}^{\pm}$.
In our case, the dispersion relation (\ref{eq.defTheta}) can be rewritten as 
$$ -\kappa^{\pm}(\lambda)^2=\Theta_{\lambda,k}^{\pm}=k^2- \lambda^2\, \eps_\lambda^\pm\, \mu_\lambda^\pm.$$
By differentiating this latter expression with respect to $\lambda,$ one gets
$$
2 \kappa^{\pm}\,\frac{\rmd \kappa^{\pm}}{\rmd \lambda}=-\frac{\partial \Theta_{\lambda,k}}{\partial \lambda}.
$$
From (\ref{eq.defphasevel}) and (\ref{eq.defphasegroup}), this yields
\begin{equation}\label{eq.vphvg}
v_{\phi}^{\pm} v_{g}^{\pm}=-2 \lambda \Big(\frac{\partial \Theta_{\lambda,k}^{\pm}}{\partial \lambda}\Big)^{-1}.
\end{equation}
In the vacuum, one easily check that formula ({\ref{eq.vphvg}}) leads to the classical relation
$$
v_{\phi}^{-}v_{g}^{-}=(\eps_0 \mu_0)^{-1}>0.
$$
Thus all the propagative waves are direct propagative. In the Drude material, one has
$$
\frac{\partial \Theta_{\lambda,k}^{+}}{\partial \lambda}= -\frac{\rmd (\lambda \eps_\lambda^+)}{\rmd \lambda} \, \mu_\lambda^+ -\frac{\rmd (\lambda \mu_\lambda^+)}{\rmd \lambda} \, \eps_\lambda^+ .
$$
and by expressing the derivatives in the latter expression, one can rewrite ({\ref{eq.vphvg}}) as
\begin{equation}\label{eq.sign}
v_{\phi}^{+} v_{g}^{+}=2 \lambda \left[ \mu_0 \, \left( 1+\frac{\Omegam^2}{\lambda^2}\right) \eps_\lambda^++  \eps_0 \left( 1+\frac{\Omegae^2}{\lambda^2}\right) \mu_\lambda^+ \right]^{-1}.
\end{equation}
One sees with this last expression that the sign of $v_{\phi}^{+} v_{g}^{+}$ depends on the sign of $ \eps_\lambda^+$ and $ \mu_\lambda^+$.

Let us look at  what happens in the different spectral zones. This study is summarized in the tables of Figures \ref{fig.modes} and \ref{fig.plasmons}. 
 
$\bullet$ Suppose first that $(k,\lambda) \in \zDD.$ In this spectral zone, $\thetalk^\pm = - \rmi \,|\Thetalk^\pm|^{1/2}$ thus the corresponding  waves $u_{\kappa}$ are propagative on both 
side of the interface. Moreover, as $ \eps_\lambda^+>0$ and $ \mu_\lambda^+>0$, the product (\ref{eq.sign}) is positive: the propagative waves are direct propagative waves in both media and  thus their direction of propagation is the sign of their wave number $\kappa$. (\ref{eq.psim}) shows that for $x<0,$ function $\psilm$ is an oscillating wave of amplitude 1 and wave number $\kappa=-|\Thetalk^{-}|^{1/2}$ which propagates towards $-\infty$, whereas for $x>0,$ it is a superposition of a wave of amplitude $A_{k,\lambda,-1}$ and wave number $\kappa=-|\Thetalk^{+}|^{1/2}$ which propagates towards the origin  and a wave of amplitude $B_{k,\lambda,-1}$ and wave number $\kappa=|\Thetalk^{+}|^{1/2}$ which propagates towards $+\infty.$ In other words, $\psilm$ can be interpreted as an \emph{incoming incident} wave of amplitude $A_{k,\lambda,-1}$ propagating from $+\infty$ whose diffraction on the interface $x=0$ generates two \emph{outgoing} waves: a \emph{reflected} wave of amplitude $B_{k,\lambda,-1}$ and a \emph{transmitted} wave of amplitude 1. Similarly, $\psilp$ can be interpreted as an \emph{incoming incident} wave of amplitude $A_{k,\lambda,+1}$ and wave number $\kappa=|\Thetalk^{-}|^{1/2}$ propagating from $-\infty$ whose diffraction on the interface $x=0$ generates two \emph{outgoing} waves: a \emph{reflected} wave of amplitude $B_{k,\lambda,+1}$ and wave number $\kappa=-|\Thetalk^{-}|^{1/2}$ and a \emph{transmitted} wave of amplitude 1 and wave number $\kappa=|\Thetalk^{+}|^{1/2}$. 

$\bullet$ If $(k,\lambda) \in \zDE,$ we still have $\thetalk^- = - \rmi \,|\Thetalk^-|^{1/2}$ but now $\thetalk^+ = |\Thetalk^+|^{1/2},$ which means that propagative waves are no longer allowed in the Drude material: the only possible waves are exponentially decreasing or increasing. Proceeding as above, we see that on the one hand, $\psilm$ can be interpreted as an \emph{incident} wave of amplitude $A_{k,\lambda,-1}$ which is exponentially increasing as $x \to +\infty$ whose diffraction on the interface $x=0$ generates two waves: a \emph{reflected evanescent} wave of amplitude $B_{k,\lambda,-1}$ and a \emph{transmitted outgoing} wave of amplitude 1 and wave number $\kappa=-|\Thetalk^{-}|^{1/2}$. On the other hand, $\psilp$ can be interpreted as an \emph{incoming incident} wave of amplitude $A_{k,\lambda,+1}$ and wave number $\kappa=|\Thetalk^{-}|^{1/2}$ propagating from $-\infty$ whose diffraction on the interface $x=0$ generates two waves: a \emph{reflected outgoing} wave of amplitude $B_{k,\lambda,+1}$ and wave number $\kappa=-|\Thetalk^{-}|^{1/2}$ and a \emph{transmitted evanescent} wave of amplitude 1.

$\bullet$ Suppose now that $(k,\lambda) \in \zDI.$ We still have $\thetalk^- = - \rmi \,|\Thetalk^-|^{1/2}$ but propagative waves are again allowed in the Drude material since $\thetalk^+ = +\rmi\,|\Thetalk^+|^{1/2}.$  Compared with the case where $(k,\lambda) \in \zDD,$ we simply have a change of sign in the expression of $\thetalk^+,$ which amounts to reversing all sign of the wave numbers $\kappa$ associated to the propagative waves in the Drude material. Hence for $x >0,$ one might be tempted to interchange the words \emph{incoming} and \emph{outgoing} in the above interpretation of $\psilpm$ for $(k,\lambda) \in \zDD.$ It would be wrong! Indeed, the change of sign of the imaginary part of $\thetalk^+$ yields an opposite sign of the \emph{phase velocity} in the Drude material, but not of the \emph{group velocity} which characterizes the direction of the energy transport. As we see in (\ref{eq.sign}), since
$\eps_{\lambda}^{+}$ and $\mu_{\lambda}^{+}$ are both negative in this spectral zone, the phase and group velocity are pointing in opposite directions. Hence, the waves $u_{\kappa}$ are \emph{inverse propagative} in the Drude material. %The Drude material behaves here as a negative index material, which yields a negative refraction phenomenon \cite {Zio-01}, that is represented on the figure \ref{fig.modes} with the directions of the phase velocities of the different $u_{\kappa}$ which compose $\psilpm$. 

$\bullet$ Assuming now that $(k,\lambda) \in \zEI,$ we have $\thetalk^- = |\Thetalk^-|^{1/2}$ and $\thetalk^+ = +\rmi\,|\Thetalk^+|^{1/2}.$  The Drude material behaves  behaves as a negative material as in the previous case (since $\eps_{\lambda}^{+}$ and $\mu_{\lambda}^{+}$ are also both negative in this spectral zone), but propagative waves are no longer allowed in the vacuum.

\begin{figure}[!t]
\centering
\includegraphics[width=0.8\textwidth]{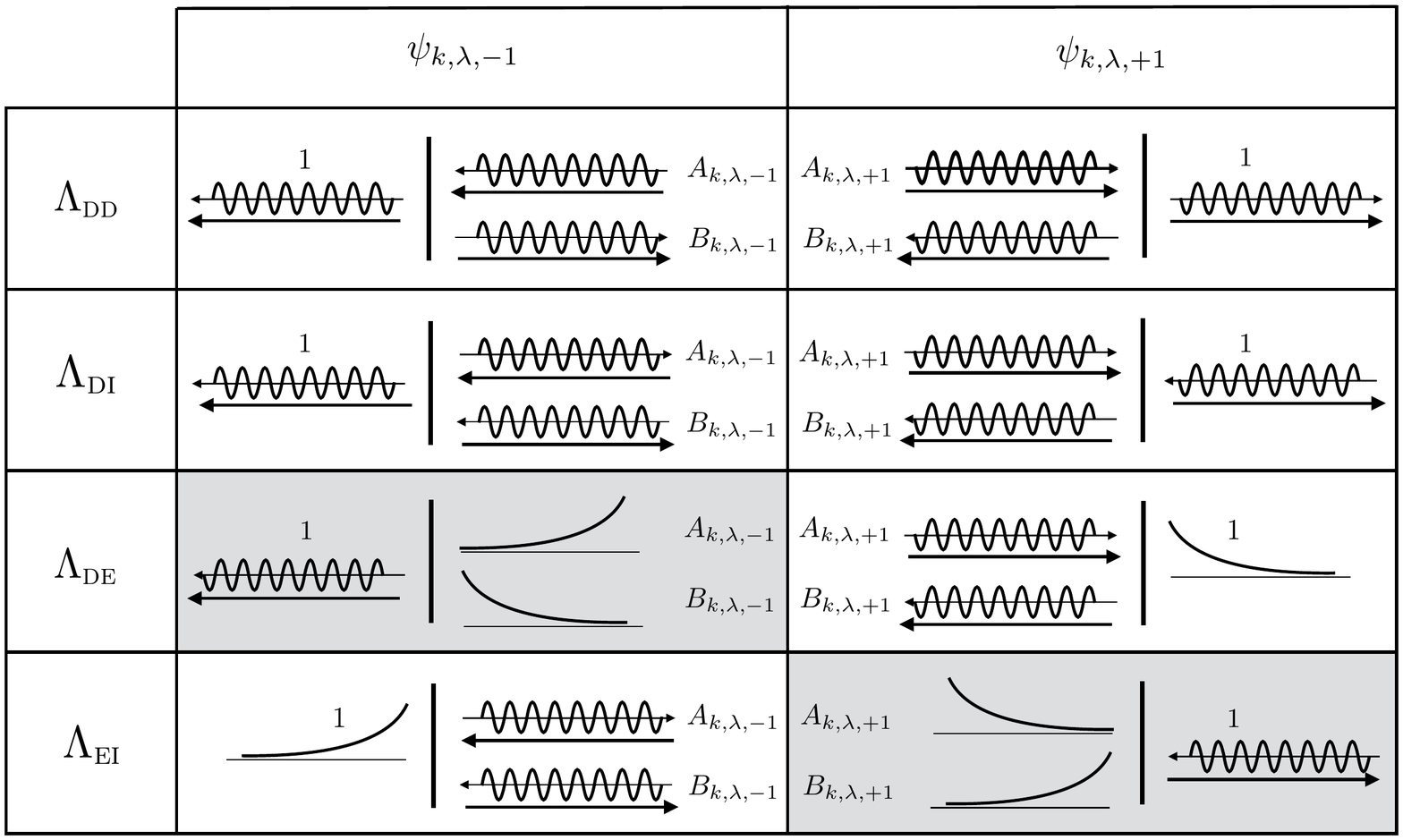}
\caption{Description of the functions $\psilpm$ in the different spectral zones $\Lambda_{\DD}$, $\Lambda_{\DI}$, $\Lambda_{\DE}$ and $\Lambda_{\EI}$. Propagative waves are represented by an oscillating function and the larger and smaller arrow indicate respectively the direction of the group velocity and the phase velocity. Evanescent waves are represented with decreasing exponential. The ``unphysical'' functions  $\psilpm$ which contain increasing exponential behavior are colored in gray.}
\label{fig.modes}
\end{figure}

\begin{figure}[!t]
\centering
\includegraphics[width=0.8\textwidth]{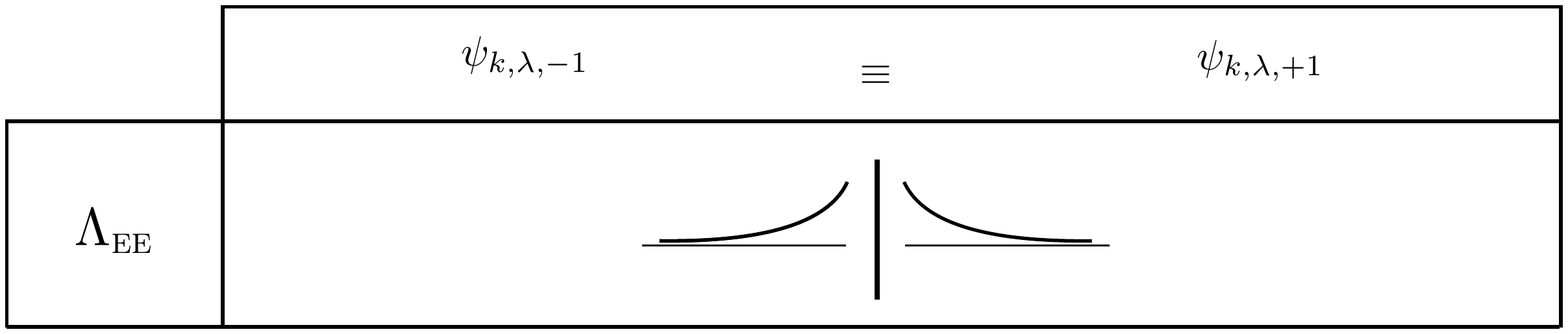}
\caption{Description of the plasmonic wave  $\psi_{k,\lambda,-1}=\psi_{k,\lambda,1}$ in the spectral zone $\Lambda_{\EE}$.}
\label{fig.plasmons}
\end{figure}

$\bullet$ Finally, if $(k,\lambda) \in \zEE,$ both $\thetalk^\pm$ are real and positive, so that waves are evanescent on both sides of the interface $x=0.$ In this case, functions $\psi_{k,\lambda,-1}$ and $\psi_{k,\lambda,+1}$ are equal and real. 
Such a wave is referred as plasmonic wave in the physical literature (see \cite{Mai-07}). Note that in the particular case $\Omegae=\Omegam$, we have $\mu_{\lambda}^{+}=-\mu_0$ and $\eps_{\lambda}^{+}=-\eps_0$ so that $\thetalk^{+}=\thetalk^{-}$. This shows that $\psi_{k,\lambda,-1} = \psi_{k,\lambda,+1}$ is an even function of $x.$\\

Among the various categories of waves described above, some of them may be called ``unphysical'' since they involve an exponentially increasing behavior at infinity, which occurs for $\psilm$ if $(k,\lambda) \in \zDE,$ as well as $\psilp$ if $(k,\lambda) \in \zEI.$ Fortunately these waves will disappear by limiting absoption in the upcoming Proposition \ref{prop.vecgen1}: only the ``physical'' ones are needed to describe the spectral behavior of $\bbAk.$ That is why in Figure \ref{fig.speczones}, all the curves represent \emph{spectral cuts} since they are the boundaries where some $\psilpm$ appear or disappear.

% -------------------------------------------------------------------------------------------------------------------------------
\subsubsection{Boundary values of the Green function}
We are now able to express the one-sided limit $g_{k,\lambda}$ of the Green function defined in (\ref{eq.defgreen}). The following two propositions, which distinguish the case of $\zEE$ from the other zones, provide us convenient expressions of the quantities related to $\gzk$ which are needed in Stone's formulas.

\begin{proposition}\label{prop.vecgen1}
For all $(k,\lambda) \in \Lambda_\scZ$ with $\scZ\in \left\{ \DD, \DI, \DE, \EI\right\}$, we have
\begin{equation*}
\frac{1}{\pi}\, \Imag( \lambda \,\glk(x,x'))
= \sum_{j\in J_{\scZ}} \overline{w_{k,\lambda,j}(x)}\,w_{k,\lambda,j}(x')
\quad\mbox{where}
\end{equation*}
\begin{equation} \label{defwkl}
w_{k,\lambda,\pm 1}(x) := \frac{1}{|\calW_{k,\lambda}|}\ \left| \frac{\lambda\,\thetalk^\mp}{\pi\,\mu_{\lambda}^\mp}\right|^{1/2}\,\psi_{k,\lambda,\pm 1}(x)
\quad\mbox{and}\quad
J_{\scZ}:=\left\lbrace\begin{array}{ll}
 \left\{ -1,+1\right\} & \mbox{ if }   \scZ = \DD \mbox{ or }\DI, \\
 \left\{ +1\right\}        &  \mbox{ if } \scZ = \DE,\\
 \left\{ -1\right\}        &  \mbox{ if } \scZ = \EI.  
       \end{array}
 \right.
\end{equation}
\end{proposition}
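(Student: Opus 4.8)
The plan is to pass to the limit $\eta\searrow0$ in the Wronskian formula (\ref{eq.defgreen}) for $\gzk$ and then extract the imaginary part by direct algebra. The starting observation is that, for real $\lambda$, all coefficients of the Sturm--Liouville equation (\ref{eq.E}) are real: both $\mu_{\lambda}^{\pm}$ and $\Thetalk^{\pm}$ are real, so the basis functions $\clk$ and $\slk$ — being entire in $\Thetalk=(\thetalk)^{2}$ — are real-valued, and the only possibly non-real quantities are the ratios $\alpha^{\pm}:=\thetalk^{\pm}/\mu_{\lambda}^{\pm}$. I would write $\psilp=\clk-\alpha^{+}\slk$, $\psilm=\clk+\alpha^{-}\slk$ and $\calW_{k,\lambda}=\alpha^{-}+\alpha^{+}$, and record from (\ref{eq.expr-thetam})--(\ref{eq.expr-thetap}) the crucial conjugation rule $\overline{\thetalk^{\pm}}=\thetalk^{\pm}$ when $\Thetalk^{\pm}>0$ (evanescent side) and $\overline{\thetalk^{\pm}}=-\thetalk^{\pm}$ when $\Thetalk^{\pm}<0$ (propagative side). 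Hence $\Imag\,\alpha^{\pm}\neq0$ precisely on the propagative sides, which is ultimately what singles out the index set $J_{\scZ}$.

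Next I would use $\Imag(\lambda\glk)=\tfrac{\lambda}{2\rmi}\big(\glk-\overline{\glk}\big)$ together with $\overline{\glk(x,x')}=\overline{\psilm}(\min)\,\overline{\psilp}(\max)/\overline{\calW_{k,\lambda}}$, where $\overline{\psilpm}=\clk\mp\overline{\alpha^{\pm}}\slk$. To dispose of the awkward $\min/\max$, the clean remark is that $\Imag(\lambda\glk)$ is a genuine solution of the homogeneous equation in each variable: since $\glk(\cdot,x')$ and its conjugate both solve (\ref{eq.green}) with the \emph{same real} right-hand side $\delta_{x'}$, their difference solves the homogeneous equation and carries no jump at $x=x'$. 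Consequently $\Imag(\lambda\glk)$ expands, with no $\min/\max$, in the real basis $\{\clk,\slk\}$ in each variable, and it suffices to expand $\glk-\overline{\glk}$ and collect the coefficients of $\clk(x)\clk(x')$, $\slk(x)\slk(x')$, $\clk(x)\slk(x')$ and $\slk(x)\clk(x')$.

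Carrying out the subtraction, every denominator combines into $|\calW_{k,\lambda}|^{2}$, the $\clk\otimes\clk$ coefficient reduces to $-\lambda\,\Imag(\alpha^{-}+\alpha^{+})/|\calW_{k,\lambda}|^{2}$, and each cross term is controlled by $\lambda\,\Imag(\overline{\alpha^{+}}\alpha^{-})/|\calW_{k,\lambda}|^{2}$. On an evanescent side $\alpha^{\pm}$ is real, so exactly those contributions drop out, leaving in each zone a separable Hermitian kernel of rank $|J_{\scZ}|$. The final step is to recognise it as $\sum_{j\in J_{\scZ}}\overline{w_{k,\lambda,j}(x)}\,w_{k,\lambda,j}(x')$: matching the $\clk\otimes\clk$ coefficient, which equals $\tfrac1\pi\,\lambda\,|\calW_{k,\lambda}|^{-2}\big(|\thetalk^{-}/\mu_{\lambda}^{-}|+|\thetalk^{+}/\mu_{\lambda}^{+}|\big)$ after restoring the $1/\pi$, against the diagonal of the claimed sum fixes the normalisation $|\lambda\,\thetalk^{\mp}/(\pi\mu_{\lambda}^{\mp})|^{1/2}/|\calW_{k,\lambda}|$ of $w_{k,\lambda,\pm1}$, and the cross terms then reassemble the products $\psilpm(x)\,\psilpm(x')$.

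I expect the main obstacle to be the sign- and modulus-bookkeeping that turns the Hermitian form produced by the subtraction into the manifestly nonnegative sum $\sum_{j\in J_{\scZ}}\overline{w_{k,\lambda,j}}\,w_{k,\lambda,j}$. The delicate point is that on a doubly propagative zone ($\zDD$, $\zDI$) the cross terms vanish because $\overline{\alpha^{+}}\alpha^{-}$ is then real, while the two surviving diagonal prefactors align only through the identity $|\thetalk^{-}/\mu_{\lambda}^{-}|\cdot(\thetalk^{+}/\mu_{\lambda}^{+})=|\thetalk^{+}/\mu_{\lambda}^{+}|\cdot(\thetalk^{-}/\mu_{\lambda}^{-})$, valid when both ratios are purely imaginary with imaginary parts of equal sign; this is exactly what makes the kernel real and of the claimed rank. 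Verifying each of the four zones, tracking the signs of $\Imag(\thetalk^{\pm})$ prescribed by (\ref{eq.expr-thetam})--(\ref{eq.expr-thetap}) and of $\mu_{\lambda}^{\pm}$, is where the genuine work lies; note that the singular locus $\overline{\zEE}$ where $\calW_{k,\lambda}=0$ is excluded from all four zones by Lemma \ref{lem.sing}, so no denominator degenerates.
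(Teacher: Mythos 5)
Your proposal is correct and follows essentially the same route as the paper: both proofs exploit that $\clk$ and $\slk$ are real-valued to replace $\min(x,x')$, $\max(x,x')$ by $x$, $x'$ in $\Imag(\lambda\,\glk)$, expand in that basis, and then pass zone by zone to the basis $\psi_{k,\lambda,\pm 1}$ using the signs of $\Imag\,\thetalk^{\pm}$ and of $\mu_{\lambda}^{\pm}$ prescribed by (\ref{eq.expr-thetam})--(\ref{eq.expr-thetap}). One caution on wording: the claim that contributions ``drop out'' on an evanescent side should be understood as applying only to the diagonal ($\clk\otimes\clk$ and $\slk\otimes\slk$) coefficients, since in the mixed zones $\zDE$ and $\zEI$ the cross terms (proportional to $\Imag(\overline{\alpha^{+}}\alpha^{-})$ in your notation) do \emph{not} vanish --- they are precisely what completes the square into the rank-one kernel $\overline{\psi_{k,\lambda,\pm 1}(x)}\,\psi_{k,\lambda,\pm 1}(x')$, as your final paragraph correctly indicates.
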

\begin{proof}
Suppose first that $(k,\lambda) \in \zDD.$ If this case, (\ref{eq.expr-thetam}) and (\ref{eq.expr-thetap}) tell us that $\thetalk^{\pm}=-\rmi\,\sgn(\lambda)\,|\thetalk^{\pm}|.$ In order to express the imaginary part of the Green function, we rewrite the expression of Proposition \ref{prop.green} in terms of functions $\clk$ and $\slk$ which are real-valued. We obtain that $\Imag\big(\lambda\, \glk(x,x')\big)$ is equal to
\begin{equation*}
|\lambda|\,|\calW_{k,\lambda}|^{-1} \, \left( \clk\big(\min(x,x')\big)\,\clk\big(\max(x,x')\big) 
           + \frac{|\thetalk^{-}\,\thetalk^{+}|}{\mu_{\lambda}^{-}\,\mu_{\lambda}^{+}} \,
           \slk\big(\min(x,x')\big)\,\slk\big(\max(x,x')\big) \right).
\end{equation*}
This expression shows that we can replace $\min(x,x')$ and  $\max(x,x')$ by $x$ and $x'$ respectively. It can be written in matrix form as
\begin{equation*}
\Imag\big(\lambda\, \glk(x,x')\big) = |\lambda|\,|\calW_{k,\lambda}|^{-1} \ 
\begin{pmatrix} \clk(x) \\ \slk(x) \end{pmatrix}^{\!*} 
\begin{pmatrix} 1 &0 \\ 0 & \displaystyle \frac{|\thetalk^{-}\,\thetalk^{+}|}{\mu_{\lambda}^{-}\,\mu_{\lambda}^{+}} \end{pmatrix}
\begin{pmatrix} \clk(x') \\ \slk(x') \end{pmatrix},
\end{equation*}
where the symbol $*$ denotes the conjugate transpose of a matrix. Note that the conjugation could be omitted since $\clk$ and $\slk$ are real-valued. However it is useful for the next step which consists in rewriting this expression in terms of $\psilpm$ by noticing that
\begin{equation*}
\begin{pmatrix} \clk(x) \\ \slk(x) \end{pmatrix}
= \frac{1}{\calW_{k,\lambda}} \, \begin{pmatrix} \thetalk^+/\mu_{\lambda}^+ & \thetalk^-/\mu_{\lambda}^- \\ 1 & -1 \end{pmatrix}
\begin{pmatrix} \psilm(x) \\ \psilp(x) \end{pmatrix},
\end{equation*}which follows from the definition (\ref{eq.coeff-psi}) of $\psipm.$ Therefore, after some calculations exploiting that $\thetalk^{\pm}$ is purely imaginary, one obtains
\begin{equation*}
\Imag\big(\lambda\, \glk(x,x')\big) = |\lambda|\,|\calW_{k,\lambda}|^{-2} \ 
\begin{pmatrix} \psilm(x) \\ \psilp(x) \end{pmatrix}^* 
\begin{pmatrix} |\thetalk^+|/\mu_{\lambda}^+ & 0 \\ 0 & |\thetalk^-|/\mu_{\lambda}^- \end{pmatrix}
\begin{pmatrix} \psilm(x') \\ \psilp(x') \end{pmatrix},
\end{equation*}
which yields the announced result, for $\mu_{\lambda}^\pm > 0$ (since $\lambda > \Omegam).$\\

Suppose now that $(k,\lambda) \in \zDI$. Compared with the previous case, we have now $\thetalk^{+}=+\rmi\,\sgn(\lambda)\,|\thetalk^{+}|$  and $\mu_{\lambda}^+$ becomes negative (since $\lambda < \Omegam).$ It is readily seen that in this case, the calculations above hold true if we replace $\mu_{\lambda}^+$ by $|\mu_{\lambda}^+|.$\\

When $(k,\lambda) \in \zDE$, we still have $\thetalk^{-}=-\rmi\,\sgn(\lambda)\,|\thetalk^{-}|,$ but $\thetalk^{+}$ is now a positive number, which shows that $\psilp$ is a real-valued function. Following the same steps as above, we obtain
\begin{equation*}
\Imag\big(\lambda\, \glk(x,x')\big) = |\lambda|\,|\calW_{k,\lambda}|^{-2} \ 
\begin{pmatrix} \psilm(x) \\ \psilp(x) \end{pmatrix}^* 
\begin{pmatrix} 0 & 0 \\ 0 & |\thetalk^-|/\mu_{\lambda}^- \end{pmatrix}
\begin{pmatrix} \psilm(x') \\ \psilp(x') \end{pmatrix}.
\end{equation*}

Finally, if $(k,\lambda) \in \zEI$, we have $\thetalk^{-}>0$ and $\thetalk^{+}=+\rmi\,\sgn(\lambda)\,|\thetalk^{+}|.$ As in $\zDI,$ the Drude material behaves as a negative material $(\mu_{\lambda}^+ < 0).$ In this case, we obtain
\begin{equation*}
\Imag\big(\lambda\, \glk(x,x')\big) = |\lambda|\,|\calW_{k,\lambda}|^{-2} \ 
\begin{pmatrix} \psilm(x) \\ \psilp(x) \end{pmatrix}^* 
\begin{pmatrix} |\thetalk^+/\mu_{\lambda}^+| & 0 \\ 0 & 0 \end{pmatrix}
\begin{pmatrix} \psilm(x') \\ \psilp(x') \end{pmatrix},
\end{equation*}
which completes the proof.
\end{proof}

\begin{proposition}\label{prop.vecgen2}
For all $(k,\lambda) \in \zEE$, we have
\begin{equation*}
 (\lambda+\rmi\eta) \,g_{k,\lambda+\rmi\eta}(x,x')= \rmi \eta^{-1} \overline{w_{k,\lambda,0}(x)} \, w_{k,\lambda,0}(x') + O(1) \quad\mbox{as } \eta \searrow 0,
\end{equation*}
where the real-valued function $w_{k,\lambda,0}$ function is given by 
\begin{equation} \label{defwklzero}
w_{k,\lambda,0}(x) := \frac{\lambda^2\, |\mu_{\lambda}^{+} \, \thetalk^{+}|^{1/2}}{\Omegam (4k^4 +(\eps_0\mu_0)^2(\Omegae^2-\Omegam^2)^2)^{1/4}} \ \rme^{-\thetalk^{\pm} |x|} 
\quad\mbox{if }\pm x > 0,
\end{equation}
and the remainder $O(1)$ is uniform in $(x,x')$ on any compact set of $\bbR^2$. Finally, if $|k|=k_{\rm c}$, then  
$g_{k,\zeta}(x,x') = O(|\zeta\mp \lambda_{\rm c} |^{-1/2})$ as $\zeta \to  \pm  \lambda_{\rm c} $ from the upper-half plane uniformly with respect to $(x,x')$ on any compact set of $\bbR^2.$
As a consequence,
$$
\lim_{\eta \searrow 0}(\lambda+\rmi \eta) \,g_{k,\lambda+\rmi\eta}(x,x')=0 \ \mbox{ for } \ |k|=k_{\rm c} \ \mbox{ and } \ |\lambda|=\lambda_{\rm c}.
$$
\end{proposition}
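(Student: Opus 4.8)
The plan is to read the whole proposition as a statement about the order of vanishing of the denominator $\calW_{k,\zeta}$ in the Green function (\ref{eq.defgreen}): by Lemma \ref{lem.sing} every $(k,\lambda)\in\zEE$, as well as each critical point $(\pm k_{\rm c},\pm\lambda_{\rm c})$, solves the dispersion equation (\ref{eq.disp}), i.e. $\calW_{k,\lambda}=0$, and it is exactly this zero that produces the singularity of $\gzk$. So I would first control $\calW_{k,\zeta}$ near $\zeta=\lambda$ and then read off the behaviour of $\gzk$.

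\emph{The zone $\zEE$ with $|k|>k_{\rm c}$.} Here $\Thetalk^\pm>0$ (white area of Figure \ref{fig.speczones}, Lemma \ref{lem.disp}) and $\lambda\neq 0,\pm\Omegam$, so $\thetazk^\pm=\sqrt{\Thetazk^\pm}$, $\mu_\zeta^\pm$, $\psipm$ and $\calW_{k,\zeta}$ are analytic in $\zeta$ in a full neighbourhood of $\lambda$ (we sit away from the branch cut $\bbR^-$ of (\ref{eq.defrac})). Since $\calW_{k,\lambda}=0$ and, because $|k|>k_{\rm c}$ excludes the critical points, the zero is simple, a first-order Taylor expansion gives $\calW_{k,\lambda+\rmi\eta}=\rmi\eta\,\calW'_{k,\lambda}+O(\eta^2)$ with $\calW'_{k,\lambda}:=\rmd\calW_{k,\zeta}/\rmd\zeta|_{\zeta=\lambda}\neq0$. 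As explained in \S\ref{s.phys-spect-zones}, on $\zEE$ the two solutions coincide, $\psilm=\psilp=:\psi$, with $\psi$ real and $\psi(x)=\rme^{-\thetalk^\pm|x|}$ for $\pm x>0$, so that $\psim(\min(x,x'))\,\psip(\max(x,x'))\to\psi(x)\,\psi(x')$ uniformly on compact sets. Combining the two expansions yields
\[
(\lambda+\rmi\eta)\,g_{k,\lambda+\rmi\eta}(x,x')=\frac{-\rmi\,\lambda}{\eta\,\calW'_{k,\lambda}}\,\psi(x)\,\psi(x')+O(1),
\]
uniformly for $(x,x')$ in a compact set. It then suffices to identify $-\lambda/\calW'_{k,\lambda}$ with the square of the scalar prefactor $C$ of (\ref{defwklzero}), since $w_{k,\lambda,0}=C\psi$ is real and the claimed leading term is $\rmi\eta^{-1}\overline{w_{k,\lambda,0}(x)}\,w_{k,\lambda,0}(x')=\rmi\eta^{-1}C^2\psi(x)\psi(x')$.

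\emph{Identification of the constant (the main obstacle).} Rather than differentiate $\calW_{k,\zeta}$ brutally, I would reuse the factorization behind (\ref{eq.disp-carre}): setting $D(\zeta):=\Thetazk^-/(\mu_\zeta^-)^2-\Thetazk^+/(\mu_\zeta^+)^2=\big(\thetazk^-/\mu_\zeta^--\thetazk^+/\mu_\zeta^+\big)\,\calW_{k,\zeta}$ and noting that on $\zEE$ the first factor equals $2\thetalk^-/\mu_\lambda^-\neq0$, one gets $\calW'_{k,\lambda}=D'(\lambda)\big/\big(\thetalk^-/\mu_\lambda^--\thetalk^+/\mu_\lambda^+\big)$. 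The equation $D(\lambda)=0$ is, up to an explicit nonzero rational prefactor coming from $\mu_\zeta^+$, the polynomial relation $Q_k(\lambda^2)=0$ of (\ref{eq.Def-Qk}); hence $D'(\lambda)$ brings down $Q_k'(\lambda^2)\cdot2\lambda$, and at a root $Q_k'(\lambda^2)=\pm\Omegam^2\sqrt{4k^4+K^2}$ with $K=\eps_0\mu_0(\Omegam^2-\Omegae^2)$, the square root of the discriminant $\Omegam^4(4k^4+K^2)$ of $Q_k$. This is precisely the factor $\Omegam^2\big(4k^4+(\eps_0\mu_0)^2(\Omegae^2-\Omegam^2)^2\big)^{1/2}$ in the denominator of (\ref{defwklzero}) (note $K^2=(\eps_0\mu_0)^2(\Omegae^2-\Omegam^2)^2$ and that it is $>0$, so the zero of $\calW$ is indeed simple). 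Carrying the elementary algebra through, with the sign determinations (\ref{eq.expr-thetam})--(\ref{eq.expr-thetap}) and $\mu_\lambda^+<0$ on this zone, gives $-\lambda/\calW'_{k,\lambda}=\lambda^4\,|\mu_\lambda^+\,\thetalk^+|\big/\big(\Omegam^2\sqrt{4k^4+K^2}\big)=C^2$. I expect this bookkeeping---tracking the purely real/imaginary $\thetalk^\pm$ and the exact proportionality constant between $D$ and $Q_k$---to be the only genuine difficulty; everything else is soft.

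\emph{The critical points $|k|=k_{\rm c}$ and the consequence.} At $(k_{\rm c},\pm\lambda_{\rm c})$ both $\Thetazk^\pm$ vanish, and from the proof of Lemma \ref{lem.sing} one checks $\partial_\zeta\Thetazk^\pm|_{\pm\lambda_{\rm c}}\neq0$, so each has a \emph{simple} zero and $\thetazk^\pm=(\zeta\mp\lambda_{\rm c})^{1/2}\,h^\pm(\zeta)$ with $h^\pm$ analytic and nonvanishing. Thus $\calW_{k,\zeta}=(\zeta\mp\lambda_{\rm c})^{1/2}\big(h^-(\zeta)/\mu_\zeta^-+h^+(\zeta)/\mu_\zeta^+\big)$; because $\partial_\zeta\Thetazk^-|_{\pm\lambda_{\rm c}}$ and $\partial_\zeta\Thetazk^+|_{\pm\lambda_{\rm c}}$ have \emph{opposite} signs, the limiting values of $\thetazk^-$ and $\thetazk^+$ carry a relative phase $\rmi$, so the bracket cannot vanish and $\calW_{k,\zeta}$ vanishes to order exactly $1/2$. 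Since the numerator $\psim(\min)\psip(\max)\to1$ stays bounded, this gives $g_{k,\zeta}(x,x')=O(|\zeta\mp\lambda_{\rm c}|^{-1/2})$, uniformly on compacts; the stated $O(|\zeta\mp\lambda_{\rm c}|^{-1/2})$ for $|k|=k_{\rm c}$ follows. Finally, at the critical point the singularity of $(\lambda+\rmi\eta)\,g_{k,\lambda+\rmi\eta}$ is only of order $\eta^{-1/2}$, strictly weaker than the order-one pole present elsewhere on $\zEE$; equivalently the residual eigenfunction $w_{k,\lambda,0}$ of (\ref{defwklzero}) degenerates (its factor $|\thetalk^+|^{1/2}\to0$). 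Hence the point-mass weight $\eta\,(\lambda+\rmi\eta)\,g_{k,\lambda+\rmi\eta}=O(\eta^{1/2})\to0$ relevant to Stone's formula (\ref{eq.stone-a}), which is the content of the final display: no point mass survives at $(\pm k_{\rm c},\pm\lambda_{\rm c})$.
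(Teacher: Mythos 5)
Your proposal is correct and follows essentially the same route as the paper: both exploit the factorization $\bigl(\thetazk^-/\mu_\zeta^-+\thetazk^+/\mu_\zeta^+\bigr)\bigl(\thetazk^-/\mu_\zeta^--\thetazk^+/\mu_\zeta^+\bigr)=Q_k(\zeta^2)/\bigl(\zeta^4(\mu_\zeta^+)^2\bigr)$ to reduce the simple zero of $\calW_{k,\zeta}$ to the simple root of $Q_k$, identify the residue via $Q_k'(\lambda_\scE(k)^2)=-\Omegam^2\sqrt{4k^4+K^2}$, and treat the critical points through the simple zeros of $\Thetazk^\pm$ giving $\calW_{k,\zeta}\sim|\zeta\mp\lambda_{\rm c}|^{1/2}$. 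Your remark that the bracket $h^-/\mu^-+h^+/\mu^+$ cannot vanish at the critical points (because of the relative phase of the two square roots) makes explicit a lower bound on $\calW_{k,\zeta}$ that the paper leaves implicit, and your reading of the final display as the statement $\eta\,(\lambda+\rmi\eta)\,g_{k,\lambda+\rmi\eta}\to 0$ is exactly the form used later in Stone's formula.
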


\begin{proof}
Let $(k,\lambda) \in \zEE.$ It is readily seen that the definition (\ref{eq.coeff-psi}) of $\calW_{k,\zeta}$ can be written equivalently (see  also the proof of Lemma \ref{lem.sing})
\begin{equation*}
\calW_{k,\zeta} = \frac{Q_k(\zeta^2)}{\zeta^4 \, (\mu_{\zeta}^{+})^{2} \, (\thetazk^{-}/\mu_{\zeta}^{-} - \thetazk^{+}/\mu^{+}_{\zeta})},
\end{equation*}
where $Q_k$ is the polynomial defined in (\ref{eq.Def-Qk}). As $Q_k(\lambda_\scE(k)^2)=0$ and one can compute that $Q'_k(\lambda_\scE(k)^2) = - \Omegam^2 \sqrt{4k^4 + K^2},$ we deduce after some manipulations that, for $\lambda = \pm\lambda_\scE(k),$
\begin{equation*}
\frac{1}{\calW_{k,\lambda+\rmi\eta}} = \frac{\rmi}{\eta} \, \frac{\lambda^3 \, |\mu_{\lambda}^{+}| \, \thetalk^{+}}{\Omegam^2 \sqrt{4k^4 + K^2}} + O(1) 
\quad\mbox{as }\eta \searrow 0.
\end{equation*}
(where we used the dispersion relation (\ref{eq.disp}) and the fact that $\mu^+_{\lambda}=- |\mu_{\lambda}^{+}|).$ The announced result follows from the expression (\ref{eq.coeff-psi}) of $\gzk,$ since $\psipm$ are analytic functions of $\zeta$ near $\lambda$ which both tend to $\psilm = \psilp.$\\

Finally, if $|k|= k_{\rm c}$, $\lambda_{\rm c}$ and $-\lambda_{\rm c}$ are simple zeros of $\zeta \mapsto \Theta_{k,\zeta}^{\pm},$ it is clear (see formula (\ref{eq.coeff-psi})) that $\calW_{ k,\zeta} = O(|\zeta\mp \lambda_{\rm c}|^{1/2})$ as $\zeta \to  \pm  \lambda_{\rm c} $ from the upper-half plane.  Thus, the conclusion follows again from the expression (\ref{eq.coeff-psi}) of $\gzk,$ since $\psipm$ are here uniformly bounded in $\zeta$, $x$ and $x'$ when $\zeta$ belongs to a vicinity of $\pm  \lambda_{\rm c}$ and $(x,x')$ to any compact set of $\bbR^2.$
\end{proof}

% ===============================================================================================================================
\subsection{Diagonalization of the reduced Hamiltonian}

% -------------------------------------------------------------------------------------------------------------------------------
\subsubsection{Spectral measure of the reduced Hamiltonian}
\label{s.spec-meas-Ak}
As we shall see, the spectral zones introduced in \S\ref{s.spectr-zones} actually show us the location of the spectrum of $\bbAk$ for each $k \in \bbR:$ we simply have to extract the associated sections of these zones, that is, the sets
$$
\Lambda_\scZ(k) := \{ \lambda\in\bbR \mid (k,\lambda) \in \Lambda_{\scZ}\} \quad\mbox{for } \scZ \in \calZ := \{\DD,\DI,\DE,\EI,\EE\},
$$
which all are unions of symmetric intervals with respect to $\lambda = 0.$ This is a by-product of the following proposition which tells us that apart from the three eigenvalues $-\Omegam,0$ and $\Omegam$ (see Proposition \ref{prop.eigvalAk}), the spectrum of $\bbA_k$ is composed of two parts: an  \emph{absolutely continuous spectrum} defined by $\cup_{\scZ \in \calZ\setminus\{\EE\} }\overline{\Lambda_{\scZ}(k)}$  if $|k|\leq k_{\rm c}$ and by $\cup_{\scZ \in \calZ\setminus\{\DI,\EE\} }\overline{\Lambda_{\scZ}(k)}$ if $|k|> k_{\rm c}$ and a \emph{pure point spectrum} given by $\zEE(k) = \{ -\lambda_\scE(k),+\lambda_\scE(k) \}$ if $|k|> k_{\rm c}$ (we point out that there is no \emph{singularly continuous spectrum}).

This proposition yields a convenient expression of the spectral projection $\bbE_k(\Lambda)$ of $\bbA_k$ for $\Lambda \subset \bbR\setminus\{- \Omegam, 0, \Omegam\}.$ As $\bbE_k(\Lambda)$ is a projection on an invariant subspace by $\bbA_k,$ the canonical way to express such a projection is to use a \emph{spectral basis}. Propositions \ref{prop.vecgen1} and \ref{prop.vecgen2} provide us such a basis: these are the vector fields deduced from the $w_{k,\lambda,j}$'s (see (\ref{defwkl}) and (\ref{defwklzero})) by the ``vectorizator'' defined in (\ref{eq.opV}), \textit{i.e.},
\begin{equation} \label{eq:defwijk}
\wlkj := \bbV_{k,\lambda}\ w_{k,\lambda,j} \quad\mbox{for }
\lambda \in \Lambda_\scZ(k) \mbox{ and } j \in J_\scZ:=
\left\lbrace\begin{array}{ll}
\{ -1,+1 \} & \mbox{ if } \scZ = \DD \mbox{ or }\DI, \\
\{ +1 \}    & \mbox{ if } \scZ = \DE,\\
\{ -1 \}    & \mbox{ if } \scZ = \EI, \\
\{ 0 \}     & \mbox{ if } \scZ = \EE,
\end{array}
\right.
\end{equation}
for each zone $\scZ \in \calZ.$ As we shall see afterwards, the knowledge of these vector fields leads to a diagonal form of $\bbA_k.$ These are \emph{generalized eigenfunctions} of $\bbA_k.$

\begin{proposition}\label{prop.specmeasureAk}
Let $k \in \bbR,$ $\bbE_k$ the spectral measure of $\bbA_k$ and $\bU \in \Dx$ (see {\rm (\ref{eq.defDx})}). For all interval $\Lambda \subset \Lambda_\scZ(k)$ with $\scZ \in \calZ := \{\DD,\DI,\DE,\EI\},$ we have
\begin{eqnarray}
\left\|\bbE_k(\Lambda)\,\bU\right\|^2_{\indHx}  & = & 
\int_\Lambda \sum_{j \in J_\scZ} \left| \langle \bU\,,\wlkj \rangle_{\indHx,s} \right|^2\, \rmd \lambda \quad\mbox{and} \label{eq.specmesAk-cont} \\
\left\|\bbE_k(\{\pm \lambda_\scE(k)\})\,\bU\right\|^2_{\indHx} & = &
\left\{ \begin{array}{ll}
\left| ( \bU\,,\boldsymbol{W}_{\!\! k,\pm \lambda_\scE(k),0} )_\indHx \right|^2 & \mbox{if } |k|>k_{\rm c}, \\[5pt]
0 & \mbox{if } |k| = k_{\rm c}.
\end{array}\right. \label{eq.specmesAk-ponct}
\end{eqnarray}
Moreover, $\left\|\bbE_k(\Lambda)\,\bU\right\|^2_{\indHx} = 0$ for all interval $\Lambda \subset \bbR \setminus \bigcup_{\scZ\in\calZ}  \Lambda_\scZ(k)$, which does not contain $0$ or $\pm\Omegam.$
\label{p.specmesAk}
\end{proposition}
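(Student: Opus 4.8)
The plan is to substitute the integral representation (\ref{eq.resint2}) of the resolvent into Stone's formulas (\ref{eq.stone-ab}) and (\ref{eq.stone-a}), and to read off the boundary values of the Green function furnished by Propositions \ref{prop.vecgen1} and \ref{prop.vecgen2}. Throughout I work with $\bU \in \Dx$, so that $\bbS_{k,\zeta}\bU$ is a compactly supported $L^2(\bbR)$ function and every integral below is a genuine convergent integral. The first observation is that the term $(\bbT_{k,\zeta}\bU,\bU)_{\indHx}$ in (\ref{eq.resint2}) does not contribute to the imaginary part as $\zeta = \lambda+\rmi\eta \to \lambda$: since $\bbT_{k,\zeta}$ is analytic in $\zeta$ near any real $\lambda \neq 0,\pm\Omegam$ and satisfies $\bbT_{k,\zeta}^{*} = \bbT_{k,\bar\zeta}$ by (\ref{eq.adjoints}), its one-sided limit $\bbT_{k,\lambda}$ is self-adjoint, whence $(\bbT_{k,\lambda}\bU,\bU)_{\indHx} \in \bbR$. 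Consequently the imaginary part of $(R_k(\lambda+\rmi\eta)\bU,\bU)_{\indHx}$ is asymptotically carried entirely by the Green-function double integral.

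For the continuous part, fix $\scZ \in \{\DD,\DI,\DE,\EI\}$ and an interval $\Lambda = (a,b) \subset \Lambda_\scZ(k)$ whose endpoints are not eigenvalues. In (\ref{eq.resint2}) I let $\eta \searrow 0$, using that $\bbS_{k,\zeta}\bU \to \bbS_{k,\lambda}\bU$ and $\bbS_{k,\bar\zeta}\bU \to \bbS_{k,\lambda}\bU$ (analyticity of $\bbS_{k,\zeta}$ near real $\lambda$), and I invoke Proposition \ref{prop.vecgen1} for the kernel. Because that kernel $\sum_{j}\overline{w_{k,\lambda,j}(x)}\,w_{k,\lambda,j}(x')$ is a finite sum of rank-one terms, the double integral factorizes and one obtains
\begin{equation*}
\frac{1}{\pi}\,\Imag\big(R_k(\lambda+\rmi\eta)\bU,\bU\big)_{\indHx}
\ \longrightarrow\
\sum_{j\in J_\scZ}\Big|\int_{\bbR} w_{k,\lambda,j}(x)\,\overline{\bbS_{k,\lambda}\bU(x)}\,\rmd x\Big|^{2}
\quad\mbox{as }\eta\searrow 0.
\end{equation*}
Using $\bbS_{k,\lambda}^{*} = \bbV_{k,\lambda}$ from (\ref{eq.adjoints}) together with $\wlkj = \bbV_{k,\lambda}\,w_{k,\lambda,j}$ from (\ref{eq:defwijk}), each integral is identified with the pairing $\langle \bU, \wlkj \rangle_{\indHx,s}$, which is meaningful since $\bU$ has compact support while $\wlkj$ is only bounded, not in $\Hx$. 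It then remains to integrate over $\lambda \in (a,b)$ according to (\ref{eq.stone-ab}); since no atom sits in the open continuous zone (the resolvent boundary value is finite there, so (\ref{eq.stone-a}) yields $\bbE_k(\{a\}) = \bbE_k(\{b\}) = 0$), the symmetric combination $\tfrac12(\bbE_k((a,b)) + \bbE_k([a,b]))$ collapses to $\bbE_k(\Lambda)$, giving (\ref{eq.specmesAk-cont}).

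For the point spectrum I apply (\ref{eq.stone-a}) at $a = \pm\lambda_\scE(k)$ together with the expansion of Proposition \ref{prop.vecgen2}. Multiplying $(\lambda+\rmi\eta)g_{k,\lambda+\rmi\eta}$ by $\eta$ turns the singular coefficient $\rmi\eta^{-1}$ into $\rmi$, and taking the imaginary part of the resulting rank-one kernel, whose factorized value $\big|\int_{\bbR} w_{k,\lambda,0}\,\overline{\bbS_{k,\lambda}\bU}\,\rmd x\big|^{2}$ is real and nonnegative, leaves exactly that square; the $O(1)$ remainder is killed by the factor $\eta$. Here $w_{k,\lambda,0}$ decays exponentially on both sides, so $\boldsymbol{W}_{\!\! k,\pm \lambda_\scE(k),0} = \bbV_{k,\lambda}\,w_{k,\lambda,0}$ genuinely belongs to $\Hx$ and the pairing is the honest inner product $(\bU,\boldsymbol{W}_{\!\! k,\pm \lambda_\scE(k),0})_{\indHx}$, which yields (\ref{eq.specmesAk-ponct}) for $|k| > k_{\rm c}$; for $|k| = k_{\rm c}$ the bound $g_{k,\zeta} = O(|\zeta \mp \lambda_{\rm c}|^{-1/2})$ forces $\eta\,\Imag(R_k\bU,\bU)_{\indHx} = O(\eta^{1/2}) \to 0$, so the atom vanishes. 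Finally, for an interval $\Lambda$ lying in the ``white region'' $\bbR \setminus \bigcup_\scZ \Lambda_\scZ(k)$ (avoiding $0,\pm\Omegam$ and the isolated plasmon values, which are already accounted for by (\ref{eq.specmesAk-ponct})), both $\theta_{k,\lambda}^{\pm}$ are real and positive and $\calW_{k,\lambda}$ is real and nonzero, so $\glk$ is real-valued; then $\Imag(\lambda\,\glk) = 0$ and $(\bbT_{k,\lambda}\bU,\bU)_{\indHx} \in \bbR$ make the right-hand side of (\ref{eq.stone-ab}) vanish.

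The genuinely delicate point is the passage to the limit $\eta \searrow 0$: one must justify interchanging this limit with both the spectral integral $\int_a^b \rmd\lambda$ and the double integral $\int_{\bbR^2}\rmd x\,\rmd x'$. This rests on locally uniform bounds for $g_{k,\lambda+\rmi\eta}$ (and for $\bbS_{k,\lambda+\rmi\eta}\bU$) as $\eta \searrow 0$, which hold on compact subsets of the open zones, where $\calW_{k,\lambda} \neq 0$, but degenerate at the zone boundaries $\lambda \in \{\lambda_0(k),\lambda_\scI(k),\lambda_\scD(k),\lambda_\scE(k)\}$, where either $\theta_{k,\lambda}^{\pm} \to 0$ or $\calW_{k,\lambda} \to 0$. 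Controlling these boundary layers, and checking that the boundary curves form a $\lambda$-null set contributing nothing to (\ref{eq.stone-ab}) so that the formula extends from compact subintervals to all of $\Lambda_\scZ(k)$ by monotone convergence, is where the real work lies; the compact support of $\bU \in \Dx$ and the explicit exponential and oscillatory form of the $\psi_{k,\lambda,\pm 1}$ are precisely what make such estimates available.
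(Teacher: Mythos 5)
Your proposal is correct and follows essentially the same route as the paper's proof: Stone's formulas applied to the representation (\ref{eq.resint2}), dominated convergence on compact subintervals, the boundary values of Propositions \ref{prop.vecgen1} and \ref{prop.vecgen2}, factorization of the rank-one kernels via Fubini, identification of the resulting integrals with $\langle \bU,\wlkj\rangle_{\indHx,s}$ (the paper phrases this as an integration by parts rather than via the adjoint relations, but it is the same computation), and the $O(\eta^{1/2})$ vanishing of the atom at $|k|=k_{\rm c}$. The limit-interchange issue you flag at the end is handled in the paper exactly as you describe, by uniform domination away from the zone boundaries together with $\sigma$-additivity of the spectral measure.
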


\begin{remark}\label{rem.wL21D}
Note the symbol $s$ in the index ``$\,\indHx,s$'' in {\rm (\ref{eq.specmesAk-cont})}. It indicates that $\langle \cdot\,,\cdot\rangle_{\indHx,s}$ is a slight adaptation of the inner product $( \cdot\,,\cdot )_{\indHx},$ which is necessary because $\wlkj \notin \Hx$ since these are oscillating (bounded) functions at infinity (that is why they are called \emph{generalized}). A simple way to overcome this difficulty is to introduce \emph{weighted} $L^2$ spaces
\begin{equation*}
L^2_s(\calO) := \{ u \in L^2_{\rm loc}(\calO) \mid (1 + x^2)^{s/2}\,u \in L^2(\calO)\} \quad \mbox{for }s \in \bbR \mbox{ and } \calO=\bbR \mbox{ or } \calO=\bbR_+.
\end{equation*}
We can then define $\Hxs := L^2_s(\bbR) \times L^2_s(\bbR)^2 \times L^2_s(\bbR_+) \times L^2_s(\bbR_+)^2.$ It is readily seen that for positive $s,$ the spaces $\Hxps$ and $\Hxms$ are dual to each other if $\Hx$ is identified with its own dual space, which yields the continuous embeddings $\Hxps \subset \Hx \subset \Hxms.$ The notation $\langle \cdot\,,\cdot\rangle_{\indHx,s}$ represents the duality product between them, which extends the inner product of $\Hx$ in the sense that
\begin{equation*}
\langle \bU \,, \bU' \rangle_{\indHx,s} = (\bU \,, \bU')_{\indHx} \quad\mbox{if }U \in \Hx \mbox{ and }U' \in \Hxps.
\end{equation*}
The above proposition holds true as soon as we choose $s$ so that the $\wlkj$'s belong to $\Hxms,$ that is if $s > 1/2.$
\end{remark}

\begin{proof}
Let $\scZ \in \calZ\setminus\{\EE\},$ $\Lambda := [a,b] \subset \Lambda_\scZ(k)$ with $a<b$ and $\bU \in \Dx$. We show in the second part of the proof that $\bbE_k(\{a\})\,\bU = \bbE_k(\{b\})\,\bU = 0,$ which implies that $\bbE_k((a,b))\,\bU = \bbE_k([a,b])\,\bU.$ Hence Stone's formula $(\ref{eq.stone-ab})$ together with the integral representation (\ref{eq.resint2}) show that the quantity $\| \bbE_k(\Lambda)\,\bU \|_\indHx^2$ is given by the following limit, where $\zeta := \lambda+\rmi\eta:$
\begin{equation*}
\big\| \bbE_k(\Lambda)\,\bU \big\|_\indHx^2 = \lim_{\eta \searrow 0}\ \frac{1}{\pi}\int_{a}^{b}\Imag\left(  
\big(\bbT_{k,\zeta}\,\bU,\bU\big)_{\indHx} + 
\int_{\bbR^2}\zeta \, \gzk(x,x') \ \bbS_{k,\zeta}\,\bU(x) \ \overline{\bbS_{k,\bar\zeta}\,\bU(x')} \ \rmd x \, \rmd x'
\right) \rmd\lambda.
\end{equation*}
The first step is to permute the limit and the integrals in this formula thanks to the Lebesgue's dominated convergence theorem. According to the foregoing, as $\lambda \neq 0$ and $\lambda \neq \pm\Omegam,$ the integrand is a continuous function of $\eta$ for all $\lambda \in [a,b],$ $x$ and $x'$ in the compact support of $\bU$ (recall that $\bU \in \Dx).$  Moreover, the integrand is dominated by a constant (provided $\eta$ remains bounded). Therefore the permutation is justified: in the above formula, we can simply replace $\zeta$ and $\bar\zeta$ by $\lambda.$ 

Formula (\ref{eq.adjoints}) tells us that $\bbT_{k,\lambda}$ is self-adjoint, hence $\Imag (\bbT_{k,\lambda}\,\bU,\bU)_{\indHx} = 0.$ Besides, notice that 

\begin{equation*}
\Imag\left( \int_{\bbR^2}\lambda \, \glk(x,x') \ u(x) \ \overline{u(x')} \ \rmd x \, \rmd x' \right)
= \int_{\bbR^2} \Imag\Big(\lambda \, \glk(x,x')\Big) \ u(x) \ \overline{u(x')} \ \rmd x \, \rmd x',
\end{equation*}
where $u := \bbS_{k,\lambda}\,\bU$ (this is easily deduced from the symmetry of $\glk,$ \textit{i.e.}, $\glk(x,x') = \glk(x',x),$ see (\ref{eq.defgreen})). This allows us to use Proposition \ref{prop.vecgen1} so that
\begin{equation*}
\big\| \bbE_k(\Lambda)\,\bU \big\|_{\indHx}^2 = 
\int_{a}^{b}\int_{\bbR^2}  \sum_{j\in J_{\scZ}} \ \overline{w_{k,\lambda,j}(x)}\ w_{k,\lambda,j}(x') \ \bbS_{k,\lambda}\,\bU(x) \ \overline{\bbS_{k,\lambda}\,\bU(x')} \ \rmd x \, \rmd x'\, \rmd\lambda.
\end{equation*}
By Fubini's theorem, this expression becomes
\begin{equation*}
\big\| \bbE_k(\Lambda)\,\bU \big\|_{\indHx}^2 = 
\int_{a}^{b}\sum_{j\in J_{\scZ}}\left| \int_{\bbR} \bbS_{k,\lambda}\,\bU(x) \ \overline{w_{k,\lambda,j}(x)}\ \rmd x \right|^2 \rmd\lambda,
\end{equation*}
which is nothing but (\ref{eq.specmesAk-cont}), since an integration by parts shows that 
\begin{equation*}
\int_{\bbR} \bbS_{k,\lambda}\,\bU(x) \ \overline{w_{k,\lambda,j}(x)}\ \rmd x = \langle \bU\,,\wlkj \rangle_{\indHx,s}.
\end{equation*}
By virtue of the $\sigma$-additivity of the spectral measure (see Definition \ref{def.spec}), formula (\ref{eq.specmesAk-cont}) holds true for any interval $\Lambda := (a,b) \subset \Lambda_\scZ(k)$ even if the resolvent is singular. Indeed, this singular behavior occurs only if $|k|=k_{\rm c}$ at $\lambda=\pm \lambda_{\rm c}$ where Proposition \ref{prop.vecgen2} tells us that in this case, $g_{k,\zeta}(x,x^{`})=O(| \zeta\mp \lambda_{\rm c}|^{-1/2})$, which is an integrable singularity.

Suppose now that for $a<b$, $\Lambda = [a,b]$ is located outside $\bigcup_{\scZ\in\calZ} \Lambda_\scZ(k)$ and does not contain $0$ or $\pm\Omegam.$ In this case, the one-sided limit $\glk$ of the Green function is real-valued for all $\lambda \in \Lambda$ (since $\thetalk^\pm \in \bbR^+,$ see (\ref{eq.expr-thetam})-(\ref{eq.expr-thetap})) and the same steps as above yield $\left\|\bbE_k(\Lambda)\,\bU\right\|^2_{\indHx} = 0.$

Consider finally singletons $\Lambda = \{a\}$ for $a \in \bbR\setminus\{-\Omegam,0,+\Omegam\}.$ Stone's formula $(\ref{eq.stone-a})$ together with (\ref{eq.resint2}) show that for all $\bU \in \Dx,$ the quantity $\| \bbE_k(\{a\})\,\bU \|_\indHx^2$ is given by the following limit, where $\zeta := a+\rmi\eta:$
\begin{equation*}
\| \bbE_k(\{a\})\,\bU \|_\indHx^2 = \lim_{\eta \searrow 0} \ \eta \ \Imag\left( \big(\bbT_{k,\zeta}\,\bU,\bU\big)_{\indHx} + 
\int_{\bbR^2}\zeta \, \gzk(x,x') \ \bbS_{k,\zeta}\,\bU(x) \ \overline{\bbS_{k,\bar\zeta}\,\bU(x')} \ \rmd x \, \rmd x'
\right).
\end{equation*}
This shows that $\bbE_k(\{a\})$ can be nonzero only if $a$ is a singularity of $\bbT_{k,\zeta},$ $\bbS_{k,\zeta}$ or $\gzk.$ From (\ref{eq.opS}) and (\ref{eq.opT}), the singularities of $\bbT_{k,\zeta}$ and $\bbS_{k,\zeta}$ are $\zeta = 0$ and $\zeta = \pm\Omegam,$ but these points are excluded from our study since Proposition \ref{p.vp-de-Ak}. Hence, we are only interested in the singularities of $\gzk,$ that is, the zeros $\pm \lambda_\scE(k)$ of $\calW_{k,\zeta}$ defined in Lemma \ref{lem.sing}. Suppose then that $|k| > k_{\rm c}$ and $a =\pm \lambda_\scE(k).$ As above, using the Lebesgue's dominated convergence theorem and Proposition \ref{prop.vecgen2}, we obtain
\begin{equation*}
\| \bbE_k(\{a\})\,\bU \|_\indHx^2 
= \int_{\bbR^2} \overline{w_{k,a,0}(x)} \, w_{k,a,0}(x')\ \bbS_{k,a}\,\bU(x) \ \overline{\bbS_{k,a}\,\bU(x')} \ \rmd x \, \rmd x'
= \left| \int_{\bbR} \bbS_{k,a}\,\bU(x) \ \overline{w_{k,a,0}(x)}\ \rmd x \right|^2.
\end{equation*}
where the second inequality follows from Fubini's theorem. Integrating by parts, formula (\ref{eq.specmesAk-cont}) follows. On the other hand, if $|k| = k_{\rm c}$ and $|a| = \lambda_{\rm c},$ the Green function is singular near $a,$ but Proposition \ref{prop.vecgen2} tells us that $g_{k,a+\rmi\eta}(x,x') = O(\eta^{-1/2}),$ which shows that $\| \bbE_k(\{a\})\,\bU \|_\indHx^2 = 0.$
\end{proof} 

%===============================================================================================================================

\subsubsection{Generalized Fourier transform for the reduced Hamiltonian}
The aim of this subsection is to deduce from the knowledge of the spectral measure $\bbE_k(\cdot)$ a \emph{generalized Fourier transform} $\bbF_k$ for $\bbA_k,$ that is, an operator which provides us a \emph{diagonal} form of the reduced Hamiltonian  $\bbA_k$ as
$$\bbA_k = \bbF_k^* \,\lambda\,\bbF_k.$$
This transformation $\bbF_k$ maps $\Hx$ to a \emph{spectral space} $\hatH_k$ which contains fields that depend on the spectral variable $\lambda.$ In the above diagonal form of $\bbA_k,$ ``$\lambda$'' denotes the operator of multiplication by $\lambda$ in $\hatH_k.$ In short, $\bbF_k$ transforms the action of $\bbA_k$ in the physical space $\Hx$ into a linear spectral amplification in the spectral space $\hatH_k.$ We shall see that $\bbF_k$ is a partial isometry which becomes unitary if we restrict it to the orthogonal complement of the eigenspaces associated with the three eigenvalues 0 and $\pm\Omegam,$ that is, the space $\Hxdiv$ defined in Proposition \ref{p.vp-de-Ak}. 

The definition of $\bbF_k$ comes from formulas (\ref{eq.specmesAk-cont}) and (\ref{eq.specmesAk-ponct}): for a fixed $k\in \bbR$ and all $\bU\in \Hxs$, we denote
\begin{equation}\label{eq.vecgen1d}
\forall \scZ\in \calZ,  \forall \lambda \in \Lambda_{\scZ}(k) \mbox{ and } \forall \, j\in J_{\scZ},
\quad  \bbF_k\bU(\lambda,j) := \langle \bU\,,\wlkj\rangle_{\indHx,s} \,,
\end{equation}
which represents the ``decomposition'' of $\bU$ on the family of \emph{generalized eigenfunctions} $(\bw_{k,\lambda,j})$ of $\bbA_k$ defined in (\ref{eq:defwijk}). We show below that the codomain of $\bbF_k$ is given by 
\begin{equation}\label{eq.specspaceHk}
\hatH_{k}= \left\lbrace\begin{array}{ll} L^{2}(\Lambda_{\DD}(k))^{2} \oplus   L^{2}(\Lambda_{\DE}(k)) \oplus L^{2}(\Lambda_{\DI}(k))^{2} \oplus L^{2}(\Lambda_{\EI}(k))  & \mbox{ if } |k|\leq k_{\rm c} \\ [12pt]
 L^{2}(\Lambda_{\DD}(k))^{2} \oplus   L^{2}(\Lambda_{\DE}(k)) \oplus L^{2}(\Lambda_{\EI}(k)) \oplus  l^{2}(\Lambda_{\EE} (k))& \mbox{ if } |k|>k_{\rm c} .
 \end{array}
 \right.
\end{equation}
We point out that  the space $l^{2}(\Lambda_{\EE} (k))$ is here isomorphic to $\bbC^2$ since  $\Lambda_{\EE}(k)=\{-\lambda_\scE(k), \lambda_\scE(k)\}$. We denote by $ \bhatU(\lambda,j)$ the fields of $\hatH_k$, where it is understood that $\lambda\in \Lambda_\scZ(k)$ and $j\in J_{\scZ}$ for $\scZ\in \{ \DD,\DE,\DI,\EI\}$ if $|k|\leq |k_{\rm c}|$ and for  $\scZ\in \{ \DD,\DE,\EI,\EE\}$ if $|k|> |k_{\rm c}|$.
The Hilbert space $\hatH_k$ is endowed with the norm $\left\| \cdot \right\|_{\hatH_k}$ defined by
\begin{equation}
\| \bhatU \|^2_{\hatH_k}   =  
\displaystyle \left\{ \begin{array}{ll} \displaystyle
\sum_{\scZ \in \{ \DD,\DE,\DI,\EI\}}  \int_{\Lambda_\scZ(k)} \sum \limits_{j\in J_{\scZ}} | \bhatU(\cdot,j) |^2 \, \rmd \lambda & \mbox{if } |k|\leq k_{\rm c}, \\[10pt]
\displaystyle \sum_{\scZ \in \{ \DD,\DE,\EI\}}   \int_{\Lambda_\scZ(k)}\sum \limits_{j\in J_{\scZ}} | \bhatU(\cdot,j) |^2\,  \rmd \lambda +\sum_{\lambda \in \Lambda_{\EE}(k)}|\bhatU(\lambda,0)|^2 & \mbox{if } |k| > k_{\rm c}.
\end{array}\right. \label{eq.defnormhatHk}
\end{equation}

The following theorem expresses the diagonalization of the reduced Hamiltonian. Its proof is classical (see, e.g., \cite{Haz-07}) and consists of two steps.  
We first deduce from Theorem \ref{th.spec} that $\bbF_k$ is an isometry from $\Hxdiv$ to $\hatH_k$ which diagonalizes $\bbA_k.$ We then prove that $\bbF_k$ is surjective.

\begin{theorem}\label{th.diagAk}
For $k\in \bbR^{*}$, let $\bbP_k$ denote the orthogonal projection on the subspace $\Hxdiv$ of $\Hx,$ \textit{i.e.}, $\bbP_k = \chi(\bbA_k)$ where $\chi$ is the indicator function of $\bbR\setminus \{-\Omegam,0,\Omegam \}$. 
The operator $\bbF_k$ defined in {\rm (\ref{eq.vecgen1d})} extends by density to a partial isometry from $\Hx$ to $\hatH_k$
whose restriction to the range of $\bbP_k$ (that is, $\Hxdiv$) is unitary. Moreover, $\bbF_k$ diagonalizes the reduced Hamiltonian $\bbA_k$ in the sense that for any measurable function $f:\bbR\to \bbC$, we have
\begin{equation}\label{eq.projdiagAk1}
\bbP_k \, f(\bbA_k)=f(\bbA_k)\, \bbP_k =\bbF_k^{*} \,f(\lambda)\,\bbF_k  \ \mbox{ on } \rmD(f(\bbA_k)),
\end{equation} 
where $f(\lambda)$ stands for the operator of multiplication by the function $f$ in the spectral space $\hatH_k$.
\end{theorem}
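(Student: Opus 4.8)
The plan is to follow the two-step scheme announced just before the statement: first upgrade the pointwise spectral identities of Proposition~\ref{prop.specmeasureAk} to a global isometry together with the intertwining relation, and then establish surjectivity.

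\emph{Isometry and partial isometry.} Fix $k\in\bbR^*$ and $\bU\in\Dx$. Summing (\ref{eq.specmesAk-cont}) and (\ref{eq.specmesAk-ponct}) over all zones $\scZ\in\calZ$ by the $\sigma$-additivity of $\bbE_k$ (Definition~\ref{def.spec}), and invoking the last assertion of Proposition~\ref{prop.specmeasureAk} that $\|\bbE_k(\Lambda)\bU\|_{\indHx}^2=0$ off $\bigcup_{\scZ}\Lambda_\scZ(k)$, I obtain for every Borel set $\Lambda\subset\bbR\setminus\{-\Omegam,0,\Omegam\}$ the identity
\[
\|\bbE_k(\Lambda)\,\bU\|_{\indHx}^2 = \big\|\boldsymbol{1}_{\Lambda}\,\bbF_k\bU\big\|_{\hatH_k}^2 ,
\]
the right-hand side being read off from the norm (\ref{eq.defnormhatHk}). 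Choosing $\Lambda=\bbR\setminus\{-\Omegam,0,\Omegam\}$ and recalling $\bbP_k=\chi(\bbA_k)=\bbE_k(\Lambda)$ gives $\|\bbF_k\bU\|_{\hatH_k}=\|\bbP_k\bU\|_{\indHx}\le\|\bU\|_{\indHx}$. Since $\Dx$ is dense in $\Hx$, the operator $\bbF_k$ extends by continuity to all of $\Hx$ with $\|\bbF_k\bU\|_{\hatH_k}=\|\bbP_k\bU\|_{\indHx}$ for every $\bU$; this is exactly the assertion that $\bbF_k$ is a partial isometry with initial space $\Hxdiv=\mathrm{Ran}\,\bbP_k$, so that $\bbF_k^*\bbF_k=\bbP_k$ and $\bbF_k$ is isometric on $\Hxdiv$.

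\emph{Diagonalization.} Polarizing the displayed identity yields $(\bbE_k(\Lambda)\bU,\bU')_{\indHx}=(\boldsymbol{1}_{\Lambda}\bbF_k\bU,\bbF_k\bU')_{\hatH_k}$ for all $\bU,\bU'\in\Hx$ and all Borel $\Lambda\subset\bbR\setminus\{-\Omegam,0,\Omegam\}$. Integrating a bounded Borel $f$ against the complex measure $\Lambda\mapsto(\bbE_k(\Lambda)\bbP_k\bU,\bU')_{\indHx}$, which is supported in $\bbR\setminus\{-\Omegam,0,\Omegam\}$, and using the spectral-theorem functional calculus (Theorem~\ref{th.spec}), I get
\[
(f(\bbA_k)\bbP_k\bU,\bU')_{\indHx}=\big(f(\lambda)\,\bbF_k\bU,\bbF_k\bU'\big)_{\hatH_k}=(\bbF_k^*\,f(\lambda)\,\bbF_k\,\bU,\bU')_{\indHx},
\]
hence $\bbP_k f(\bbA_k)=\bbF_k^* f(\lambda)\bbF_k$ for bounded $f$; the identity $\bbP_k f(\bbA_k)=f(\bbA_k)\bbP_k$ is immediate since $\bbP_k=\chi(\bbA_k)$ commutes with $f(\bbA_k)$. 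A truncation argument with $f_n:=f\,\boldsymbol{1}_{\{|f|\le n\}}$, using the strong convergence $f_n(\bbA_k)\to f(\bbA_k)$ on $\rmD(f(\bbA_k))$ and the corresponding convergence of the multiplication operators in $\hatH_k$, extends (\ref{eq.projdiagAk1}) to arbitrary measurable $f$; the special case $f(\lambda)=\lambda$ is the announced diagonal form.

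\emph{Surjectivity — the main obstacle.} By the isometry, $\mathrm{Ran}\,\bbF_k$ is closed, and (\ref{eq.projdiagAk1}) shows it is invariant under multiplication by every bounded Borel function of $\lambda$. A closed subspace of $\hatH_k$ stable under all such multiplications is a measurable field of subspaces, consisting of those $\bhatU$ with $\bhatU(\lambda,\cdot)\in V_\scZ(\lambda)\subseteq\bbC^{|J_\scZ|}$ a.e.; surjectivity is thus equivalent to $V_\scZ(\lambda)=\bbC^{|J_\scZ|}$ for a.e.\ $\lambda$, i.e.\ to the fact that the multiplicity of the absolutely continuous spectrum in zone $\scZ$ equals $|J_\scZ|$ and is \emph{entirely realized} by the generalized eigenfunctions. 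This is exactly the information carried by the diagonal spectral-density matrices of Proposition~\ref{prop.vecgen1}: the number of nonzero diagonal entries is $|J_\scZ|$, so the functionals $\bU\mapsto\langle\bU,\wlkj\rangle_{\indHx,s}$, $j\in J_\scZ$, are a.e.\ linearly independent and span the fiber. Equivalently, one may exhibit the explicit adjoint (reconstruction formula)
\[
\bbF_k^*\,\bhatU=\sum_{\scZ\neq\EE}\int_{\Lambda_\scZ(k)}\sum_{j\in J_\scZ}\bhatU(\lambda,j)\,\wlkj\,\rmd\lambda+\sum_{\lambda\in\Lambda_\EE(k)}\bhatU(\lambda,0)\,\wlkz
\]
on $\bhatU$ compactly supported away from the spectral cuts, and check $\bbF_k\bbF_k^*=\mathrm{Id}_{\hatH_k}$ by applying Proposition~\ref{prop.specmeasureAk} to $\bbF_k^*\bhatU$; the required ``generalized orthonormality'' of the $\wlkj$ is nothing but the statement that Propositions~\ref{prop.vecgen1}--\ref{prop.vecgen2} exhaust the boundary values of the resolvent. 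I expect this step to be the crux: the isometry and the diagonalization are essentially bookkeeping on top of Proposition~\ref{prop.specmeasureAk}, whereas surjectivity hinges on the absence of singular continuous spectrum and on the precise per-zone count of generalized eigenfunctions, which is exactly what makes the dimensions in (\ref{eq.specspaceHk}) correct. On the point spectrum $\Lambda_\EE(k)$ (for $|k|>k_{\rm c}$) no weighted-space subtlety arises, since there the $\wlkz$ are genuine eigenfunctions lying in $\Hxdiv$, and the $l^2$ summand of $\hatH_k$ is handled directly by (\ref{eq.specmesAk-ponct}).
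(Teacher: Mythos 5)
Your proposal is correct and follows essentially the same route as the paper: the isometry and the intertwining relation are read off Proposition~\ref{prop.specmeasureAk} by $\sigma$-additivity, polarization and the spectral theorem, and surjectivity is reduced --- via the invariance of the range of $\bbF_k$ under multiplication by indicator functions, which is exactly the paper's commutation relation $\bbF_k\,\bbE_k(\Lambda)=\boldsymbol{1}_{\Lambda}\,\bbF_k$ --- to the a.e.\ linear independence of the generalized eigenfunctions $\wlkj$, $j\in J_{\scZ}$. The one point to tighten is your justification of that linear independence: it does not follow from counting nonzero diagonal entries in Proposition~\ref{prop.vecgen1}, but from the explicit exponential behaviors (\ref{eq.psim})--(\ref{eq.psip}) of $\psi_{k,\lambda,\pm 1}$, which is how the paper argues.
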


\begin{proof}
First, notice that the orthogonal projection $\bbP_k$ onto $\Hxdiv$ is indeed the spectral projection $\bbE_k\big(\bbR\setminus \{-\Omegam, 0,\Omegam\}\big)=\chi(\bbA_k)$ (by (\ref{eq.indicator}) and Proposition \ref{prop.eigvalAk}).

From Proposition \ref{p.specmesAk} and the definition (\ref{eq.vecgen1d}) of $\bbF_k,$ one can rewrite the spectral measure of $\bbA_k$ for any interval $\Lambda \subset \bbR\setminus \{-\Omegam, 0,\Omegam\}$ and for all $\bU \in \Dx$ as
\begin{equation*}
\left\|\bbE_k(\Lambda)\,\bU\right\|^2_{\indHx}   =  
\displaystyle \left\{ \begin{array}{ll} \displaystyle
\sum_{\scZ \in \{ \DD,\DE,\DI,\EI\}}  \int_{\Lambda \cap \Lambda_\scZ(k)}  \sum \limits_{j\in J_{\scZ}}|\bbF_k\bU(\lambda,j) |^2 \,\rmd \lambda & \mbox{if } 0<|k|\leq k_{\rm c}, \\[5pt]
\displaystyle  \sum_{\scZ \in \{ \DD,\DE,\EI\}}   \int_{\Lambda \cap \Lambda_\scZ(k)}  \sum \limits_{j\in J_{\scZ}} |\bbF_k\bU(\lambda,j) |^2 \,\rmd \lambda+ \sum_{\Lambda \cap \Lambda_{k}(\EE)} |\bbF_k\bU(\lambda,0) |^2& \mbox{if } |k| > k_{\rm c}.
\end{array}\right.
\end{equation*}
In the particular case $\Lambda=\bbR\setminus \{-\Omegam, 0,\Omegam\}$, we have $\Lambda \cap \Lambda_\scZ(k)=\Lambda_\scZ(k)$ for all $\scZ\in \calZ$,  thus using the definition (\ref{eq.defnormhatHk}) of the norm $\| \cdot \|_{\hatH_k} $ leads to the following identity
$$
\|\bbP_k \bU \|_{\Hx}^2=\| \bbF_k\bU \|_{\hatH_k}^2, \quad  \forall \bU \in \Dx.
$$
Hence, as $\Dx$ is dense in $\Hx$, $\bbF_k$ extends to a bounded operator on $\Hx$ and the latter formula
holds for all $\bU\in \Hx$. Thus, $\bbF_k$ is a partial isometry which satisfies $\bbF_k^{*}\bbF_k=\bbP_k$ and its restriction to the range of $\bbP_k$ is an isometry. 

In the sequel, as the above expression of the spectral measure depends on $k$, we only detail the case $0<|k|\leq k_{\rm c}$. The case $|k|>k_{\rm c}$ can be dealt with in the same way. Using the polarization identity, the above expression of $\|\bbE_k(\Lambda)\,\bU\|^2_{\indHx}$ yields that of $(\bbE_k(\Lambda)\,\bU,\bV)_{\indHx}$ and the spectral theorem \ref{th.spec} then shows that
\begin{equation}\label{eq.thspecAk}
(f(\bbA_k)\, \bbP_k  \, \bU,\bV)_{\Hx}=\sum_{\scZ \in \{ \DD,\DE,\DI,\EI\}}   \int_{\Lambda_\scZ(k)} f(\lambda)  \sum \limits_{j\in J_{\scZ}}  \, \bbF_k \bU(\lambda,j) \, \overline{\bbF_k \bV(\lambda,j) }\,  \rmd \lambda ,
\end{equation}
which holds for all $\bU \in \rmD(f(\bbA_k))$ (note that $f(\bbA_k)$ and $\bbP_k$ commute, thus $ \bbP_k  \, \bU\in  \rmD(f(\bbA_k))$ for $\bU \in \rmD(f(\bbA_k))$) and $\bV \in \Hx$. Using the definition of the inner product in $\hatH_k$ (see (\ref{eq.defnormhatHk})), this latter formula can be rewritten as 
$$(f(\bbA_k)\, \bbP_k \bU,\bV)_{\Hx}=(f(\lambda) \bbF_k \bU,  \bbF_k \bV )_{\hatH_k},$$ 
which yields (\ref{eq.projdiagAk1}).

Let us prove now that the isometry $\bbF_k: \Hxdiv \to \hatH_k$ is unitary, \textit{i.e.}, that  $\bbF_k$ is surjective or equivalently that $\bbF_k^{*}$ is injective.  Let $\bhatU\in \hatH_k$ such that $\bbF_k^{*} \bhatU=0$, then
\begin{equation}\label{eq.scalarzero}
(\bhatU, \bbF_k \bV)_{\hatH_k}=0 , \quad \forall \bV \in \Hx.
\end{equation}
We now choose a spectral zone  $\scZ\in \{\DD,\DE,\DI,\EI\}$.
For any interval $\Lambda \subset \Lambda_\scZ(k)$,  one denotes $\boldsymbol{1}_{\Lambda} \in \mathcal{L}(\hatH_k)$, the orthogonal projection in $\hatH_k$  corresponding to the  multiplication by the indicator function of $\Lambda$. We shall show at the end of the proof the commutation property
\begin{equation}\label{eq.comut}
\bbF_k \bbE_k(\Lambda)=\boldsymbol{1}_{\Lambda}\bbF_k \ \mbox{ in } \mathcal{L}(\Hx, \hatH_k).
\end{equation}
Using this relation and (\ref{eq.scalarzero}) for $\bbE_k(\Lambda)\bV$ instead of $\bV,$ we get
$$
(\bhatU, \bbF_k \bbE_k(\Lambda)\bV )_{\hatH_k}= (\bhatU,  \boldsymbol{1}_{\Lambda}\bbF_k \bV )_{\hatH_k}= \int_{\Lambda }  \sum \limits_{j\in J_{\scZ}} \bhatU(\lambda,j) \,  \overline{\bbF_k \bV(\lambda,j) } \, \rmd \lambda=0,
$$
where we have used the definition of inner product in $\hatH_k$ (cf (\ref{eq.defnormhatHk})).
Hence, as the last formula holds for any interval $ \Lambda \subset \Lambda_\scZ(k)$, we get
$$
 \sum_{j \in J_{\scZ} } \bhatU(\lambda,j) \, \overline{\bbF_k \bV(\lambda,j) }= \sum_{j \in J_{\scZ}}  \bhatU(\lambda,j)  \langle  \bw_{k,\lambda,j},\bV \rangle_{\indHx,s} =0, \ \forall \bV\in    \Hxps  \mbox{ for a.e. } \lambda \in \Lambda_\scZ(k)
$$
\mbox{and thus} $\sum_{j \in J_{\scZ}}  \bhatU(\lambda,j)  \wlkj=0$ in $\Hxms$ for a.e. $\lambda  \in \Lambda_\scZ(k).$ The family $(\psi_{k,\lambda,j})_{j\in J_{\scZ}}$ is clearly linearly independent (see (\ref{eq.psim}) and (\ref{eq.psip})), so is also the family 
$(w_{k,\lambda,j})_{j\in J_{\scZ}}$ (see (\ref{defwkl})). Then it follows from (\ref{eq:defwijk}) that the  $(\wlkj)_{j\in J_{\scZ}}$ are linearly independent too.
Therefore 
$$\bhatU(\lambda,j) =0 \quad\mbox{for a.e. } \lambda  \in \Lambda_\scZ (k).$$ 
As it holds for any $\scZ\in \{\DD,\DE,\DI,\EI\}$, $\bhatU=0$. Hence, $\bbF^{*}$ is injective and $\bbF$ is surjective.

It remains to prove (\ref{eq.comut}). Using (\ref{eq.projdiagAk1}) with $f=\boldsymbol{1}_{\Lambda}$ leads to $\bbE_k(\Lambda)=\bbF_k^{*}\boldsymbol{1}_{\Lambda} \bbF_k$ and therefore to $\bbF_k\bbE_k(\Lambda)=\bbQ_k \boldsymbol{1}_{\Lambda}\bbF_k$ where $\bbQ_k=\bbF_k \, \bbF_k^{*}$ is the orthogonal projection onto the (closed) range of $\bbF_k.$ To remove $\bbQ_k$, we point out that
$$
\| \bbF_k\bbE_k(\Lambda)\bV\|^2_{\hatH_k}=( \bbE_k(\Lambda) \bV,\bV)_{\Hx}^2= \|\boldsymbol{1}_{\Lambda} \bbF_k \bV\|^2_{\hatH_k}, \forall \bV \in \Hx,
$$
where the first equality is an immediate consequence of the relations $\bbF_k^{*}\bbF_k =\bbP_k$ and $\bbP_k\bbE_k(\Lambda)=\bbE_k(\Lambda)$, whereas the second one is readily deduced from (\ref{eq.thspecAk}) by taking  $\bU=\bV$ and $f=\boldsymbol{1}_{\Lambda}=\boldsymbol{1}_{\Lambda}^2$.
\end{proof}

In the following proposition, we give an explicit expression of the adjoint $\bbF_k^{*}$ of the generalized Fourier transform $\bbF_k$ which is  a ``recomposition operator'' in the sense that its ``recomposes'' a function $\bU\in \Hx$ from its spectral components $\bhatU(\lambda,j)\in \hatH_k$ which appear as ``coordinates'' on the spectral basis $(\wlkj)$. 
As $\bbF^{*}$ is bounded in $\hatH_k$, if suffices to know it on a dense subspace of $\hatH_k$. We first introduce  $\hatH_{k,{\rm c}}$ the subspace of $\hatH_k$ made of compactly supported functions. Then we consider the subspace of functions whose support ``avoids'' values of $\lambda$ , namely
$$
\begin{array}{ll}
\hatH_{k,{\rm d}} :=\big \{ \bhatU \in \hatH_{k,{\rm c}} \; | \; \mbox{supp } \bhatU \cap \{ -\Omegam, 0, \Omegam \} \big \}& \mbox{if } |k| \neq k_{\rm c} \,, \\[12pt]
\hatH_{k,{\rm d}} := \big \{ \bhatU \in \hatH_{k,{\rm c}} \; | \; \mbox{supp } \bhatU \cap \big( \{ -\Omegam, 0, \Omegam \} \cup \{ -\lambda_{\rm c}, \lambda_{\rm c} \} \big) \big \}& \mbox{if } |k| = k_{\rm c}\, .
\end{array}
$$
Note that $\hatH_{k,{\rm d}}$ is clearly dense in $\hatH_k.$

\begin{proposition}\label{prop.adjointtransformFk}
For all $\bhatU \in \hatH_{k,{\rm d}}$, we have
\begin{equation} 
\bbF_k^{*}\,\bhatU  =  
\displaystyle \left\{ \begin{array}{ll} \displaystyle
\sum_{\scZ \in \{ \DD,\DE,\DI,\EI\}}  \sum \limits_{j\in J_{\scZ}}  \int_{ \Lambda_\scZ(k)}  \bhatU(\lambda,j) \, \wlkj \,\rmd \lambda & \mbox{if } 0<|k|\leq k_{\rm c}, \\[15pt]
\displaystyle  \sum_{\scZ \in \{ \DD,\DE,\EI\}}  \sum \limits_{j\in J_{\scZ}}   \int_{ \Lambda_\scZ(k)}  \bhatU(\lambda,j) \, \wlkj \,\rmd \lambda + \sum_{\lambda \in  \Lambda_\EE(k)} \bhatU(\lambda,0)\, \wlkz & \mbox{if } |k| > k_{\rm c}.
\end{array}\right. \label{eq.adjointtransformFk}
\end{equation}
where the integrals in the right-hand side of {\rm (\ref{eq.adjointtransformFk})} are vector-valued integrals (Bochner integrals {\rm \cite{Yos-74}}) with values 
in $\Hxms.$
\end{proposition}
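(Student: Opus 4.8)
The plan is to verify the identity by duality. Since $\bbF_k^{*}$ is bounded from $\hatH_k$ to $\Hx$ and $\hatH_{k,{\rm d}}$ is dense in $\hatH_k$, it suffices to establish (\ref{eq.adjointtransformFk}) for $\bhatU \in \hatH_{k,{\rm d}}$; and since $\bbF_k^{*}\bhatU$ lies in $\Hx$, it is enough to identify its inner product against every $\bV$ in the dense subspace $\Hxps$, where $s>1/2$ is chosen as in Remark \ref{rem.wL21D} so that the $\wlkj$ belong to $\Hxms$. On such $\bV$ the transform is given by the explicit duality formula (\ref{eq.vecgen1d}), namely $\bbF_k\bV(\lambda,j)=\langle \bV,\wlkj\rangle_{\indHx,s}$, so the adjoint relation becomes computable.

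First I would expand, using the inner product of $\hatH_k$ obtained by polarization from (\ref{eq.defnormhatHk}),
\begin{equation*}
(\bbF_k^{*}\bhatU,\bV)_{\indHx} = (\bhatU,\bbF_k\bV)_{\hatH_k}
= \sum_{\scZ}\int_{\Lambda_\scZ(k)}\sum_{j\in J_\scZ}\bhatU(\lambda,j)\,\overline{\langle \bV,\wlkj\rangle_{\indHx,s}}\,\rmd\lambda ,
\end{equation*}
supplemented, when $|k|>k_{\rm c}$, by the point contribution $\sum_{\lambda\in\Lambda_\EE(k)}\bhatU(\lambda,0)\,\overline{(\bV,\wlkz)_{\indHx}}$. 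Invoking the Hermitian symmetry of the duality pairing (it extends the Hermitian inner product), $\overline{\langle \bV,\wlkj\rangle_{\indHx,s}}=\langle \wlkj,\bV\rangle_{\indHx,s}$, I then introduce the candidate $\bG$ equal to the right-hand side of (\ref{eq.adjointtransformFk}), whose absolutely continuous part is the $\Hxms$-valued Bochner integral $\sum_\scZ\sum_{j\in J_\scZ}\int_{\Lambda_\scZ(k)}\bhatU(\lambda,j)\,\wlkj\,\rmd\lambda$.

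The crux is to commute the continuous linear functional $\langle\,\cdot\,,\bV\rangle_{\indHx,s}$ on $\Hxms$ with this Bochner integral, which yields $\langle \bG,\bV\rangle_{\indHx,s}=\sum_\scZ\sum_{j\in J_\scZ}\int_{\Lambda_\scZ(k)}\bhatU(\lambda,j)\,\langle\wlkj,\bV\rangle_{\indHx,s}\,\rmd\lambda$, matching the previous display. To license this I must check that $\lambda\mapsto\bhatU(\lambda,j)\,\wlkj$ is genuinely Bochner integrable with values in $\Hxms$, and this is precisely where the definition of $\hatH_{k,{\rm d}}$ enters: on the compact support of $\bhatU$ — which avoids $\{-\Omegam,0,\Omegam\}$, and also $\{\pm\lambda_{\rm c}\}$ when $|k|=k_{\rm c}$ — the generalized eigenfunctions $\wlkj=\bbV_{k,\lambda}\,w_{k,\lambda,j}$ depend continuously on $\lambda$ (analytically in the interior of each zone) and are uniformly bounded in $\Hxms$, the only possible blow-up being at the Wronskian zeros $\pm\lambda_{\rm c}$, which are excluded. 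Since $\bhatU(\cdot,j)\in L^2$ with compact support it lies in $L^1$, so the integrand is an $L^1$, essentially bounded, measurable $\Hxms$-valued map, and the interchange is justified. The point term requires no such care, as the plasmonic fields $\wlkz$ are genuine elements of $\Hx$.

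Combining the two displays gives $(\bbF_k^{*}\bhatU,\bV)_{\indHx}=\langle \bG,\bV\rangle_{\indHx,s}$ for all $\bV\in\Hxps$; since $\bbF_k^{*}\bhatU\in\Hx$, its left-hand side equals $\langle \bbF_k^{*}\bhatU,\bV\rangle_{\indHx,s}$, so $\langle \bbF_k^{*}\bhatU-\bG,\bV\rangle_{\indHx,s}=0$ for every $\bV$ in the dense space $\Hxps$, whence $\bbF_k^{*}\bhatU=\bG$ in $\Hxms$ and, a posteriori, in $\Hx$. I expect the main obstacle to be exactly this Bochner-integrability and interchange step: one must confirm that $\lambda\mapsto\wlkj$ is a well-defined, locally bounded $\Hxms$-valued map away from the excluded singular frequencies, and that the duality pairing may be passed inside the integral; the remainder is bookkeeping with the $\hatH_k$ inner product and the density of $\Hxps$ in $\Hx$.
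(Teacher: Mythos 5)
Your proposal is correct and follows essentially the same route as the paper's own proof: both identify $(\bbF_k^{*}\bhatU,\bV)_{\indHx}=(\bhatU,\bbF_k\bV)_{\hatH_k}$ for $\bV\in\Hxps$, expand via the explicit formula $\bbF_k\bV(\lambda,j)=\langle\bV,\wlkj\rangle_{\indHx,s}$, and then commute the duality pairing with the $\Hxms$-valued Bochner integral, justified exactly as you do by the compact support of $\bhatU$ avoiding $\{-\Omegam,0,\Omegam\}$ (and $\pm\lambda_{\rm c}$ when $|k|=k_{\rm c}$) so that the $\wlkj$ stay uniformly bounded in $\Hxms$. Your additional remark that the plasmonic term needs no such care because $\wlkz\in\Hx$ is a correct (and slightly more explicit) observation than the paper makes.
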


\begin{remark} \label{rem.Fk} (i) The reason why we have to restrict ourselves to functions of $\hatH_{k,{\rm d}}$ in {\rm (\ref{eq.adjointtransformFk})} is that the $\Hxms$-norm of $ \wlkj $ remains uniformly bounded if $(k,\lambda)$ is restricted to vary in a compact set of $\bbR^2$ that does not intersect the points $\{ -\Omegam,0,\Omegam\}$, neither the points $\pm \lambda_{\rm c}$ when $|k|=k_{\rm c}$. On the other hand, these norms blow up 
as soon as $\lambda$ approaches any of these points. For $\pm \lambda_{\rm c}$ this results from the presence of the Wronskian $\calW_{k,\lambda}$ 
in the denominator of the expression {\rm (\ref{defwkl})} of $w_{k,\lambda,\pm 1}$. For $\pm \Omegam$, this is due to the term 
$\mu_\lambda^+$ (which vanishes for $\lambda = \pm \, \Omegam)$ in the same denominator. For $0$, this follows from the fact that
$\thetalk^+$ blows up when $\lambda$ tends to $0$ (cf. {\rm (\ref{eq.deftheta})} and {\rm (\ref{eq.drude})}).

(ii) Hence, when $\bhatU\in \hatH_{d,k}$, the integrals considered in {\rm (\ref{eq.adjointtransformFk})}, whose integrands are valued in $\Hxms$, are Bochner integrals {\rm \cite{Yos-74}} in $\Hxms$. However, as $\bbF^{*}$ is bounded from $\hatH_k$ to $\Hx,$ the values of these integrals belongs to $\Hx$. By virtue of the density of $\hatH_{k,{\rm d}}$ in $\hatH_k,$ the expression of $\bbF^{*}\bhatU$ for any $\bhatU\in \hatH_k$ follows by approximating $\bhatU$ by its restrictions to an increasing sequence of compact subsets of $\bbR$ as in the definition of $\hatH_{k,{\rm d}}.$ Of course, the limit we obtain belongs to $\Hx$ and does not depend on the sequence (note that this is similar to the limiting process used to express the usual Fourier transform of a $L^2$ function). This limit process will be implicitly understood in the sequel.
\end{remark}

\begin{proof}
We prove this result in the case $0<|k|\leq k_{\rm c}$, the case $|k|>k_{\rm c}$ can be dealt with in the same way.
Let $\bhatU\in \hatH_{k,{\rm d}}$. By definition of the adjoint, for all $\bV \in \Hxps,$ one has
$(\bbF_k^{*} \bhatU, \bV)_{\Hx} = (\bhatU, \bbF_k \bV)_{\hatH_k}$
and the expression of $\bbF_k \bV$ yields
$$
(\bbF_k^{*} \bhatU, \bV)_{\hatH_k} 
= \sum_{\scZ \in \{ \DD,\DE,\DI,\EI\}}  \sum \limits_{j\in J_{\scZ}}  \int_{ \Lambda_\scZ(k)} \bhatU(\lambda,j)  \,  \langle  \wlkj , \bV\rangle_{\indHx,s} \,\rmd \lambda.
$$
One can permute the duality product in $x$ and the Bochner integral in $\lambda$ to obtain
$$
(\bbF_k^{*} \bhatU, \bV)_{\hatH_k} = 
\Big\langle  \sum_{\scZ \in \{ \DD,\DE,\DI,\EI\}}  \sum \limits_{j\in J_{\scZ}}  
\int_{ \Lambda_\scZ(k)} \bhatU(\lambda,j) \,  \wlkj \,\rmd\lambda \,, \bV \Big\rangle_{\indHx,s}.
$$
As it holds for any $\bV \in \Hxps$, this yields (\ref{eq.adjointtransformFk}).
The permutation is here justified by the following arguments: for any $\scZ \in \{ \DD,\DE,\DI,\EI\}$ and $j\in J_{\scZ}$, $\bhatU(\cdot, j)$ is a $L^1(\Lambda_k(\scZ))$ compactly supported function in $\lambda$ and the generalized eigenfunctions  $\wlkj$ are uniformly bounded for $x\in \bbR$ and $\lambda$ on the compact support of $\bhatU(\cdot, j)$. Thus, the left-hand side of the duality product is a finite sum of  Bochner integrals, since the considered integrands (which are vector-valued in $\Hxms$) are integrable. Hence, the permutation follows from a standard property 
(Fubini's like) of Bochner integrals \cite{Yos-74}.   
\end{proof}

%XXXXXXXXXXXXXXXXXXXXXXXXXXXXXXXXXXXXXXXXXXXXXXXXXXXXXXXXXXXXXXXXXXXXXXXXXXXXXXXXXXXXXXXXXXXXXXXXXXXXXXXXXXXXXXXXXXXXXXXXXXXXXXX
%XXXXXXXXXXXXXXXXXXXXXXXXXXXXXXXXXXXXXXXXXXXXXXXXXXXXXXXXXXXXXXXXXXXXXXXXXXXXXXXXXXXXXXXXXXXXXXXXXXXXXXXXXXXXXXXXXXXXXXXXXXXXXXX
\section{Spectral theory of the full Hamiltonian and application to the evolution problem}
\label{s-Spec-th-A}
The hard part of the work is now done: for each fixed non zero $k$, we have obtained  a diagonal form of the reduced Hamiltonian $\bbA_k$. It remains to gather this collection of results for $k\in \bbR^{*}$, which yields a diagonal form of the full Hamiltonian $\bbA$. The proper tools to do so are the notions of direct integrals of Hilbert spaces and operators (see, e.g. \cite{Dix-69,Ree-78,Wan-06}) that we implicitly assume to be known by the reader (at least their definition and elementary properties). 

\subsection{Abstract diagonalization of  $\bbA$}
The first step consists in rewriting the link (\ref{eq.AtoAk}) between $\bbA$ and $\bbA_k$ in an abstract form using direct integrals. The partial Fourier transform in the $y$-direction $\calF$ led us to define $\bbA_k$ for each fixed $k$ as an operator in $\Hx$ (see (\ref{eq.defH1D})). Actually, the initial space $\Hxy$ introduced in (\ref{eq.defHxy}) is nothing but the tensor product of Hilbert spaces $\Hx\otimes L^2(\bbR_y)$ or equivalently the (constant fiber) integral
$$
\Hxy=\int_{\bbR}^{\oplus} \Hx  \,\rmd y.
$$ 
Hence the partial Fourier transform $\calF$ appears as a unitary operator from $\Hxy$ to
$$
\Hxk:=\Hx \otimes  L^2(\bbR_k)=\int_{\bbR}^{\oplus} \Hx  \,\rmd k.
$$ 
A vector $\bU_{\!\oplus}\in \Hxk$ is simply a $4$-uplet analogous to a $\bU\in \Hxy$ but depending on the pair of variables $(x,k)$ instead of $(x,y)$. For a.e. $k\in \bbR$, we denote  $\bU_{\!k}:=\bU_{\!\oplus}(\cdot,k)\in \Hx$ so that
$$
\| \bU_{\!\oplus}\|^2_{\Hxk}=\int_{\bbR} \| \bU_{\!k}\|_{\Hx}^2 \, \rmd k  .
$$
In this functional framework, we can gather the family of reduced Hamiltonian $\bbA_k$ for $k\in \bbR$ as a direct integral of operators $\bbA_{\oplus}$ defined by
$$
\bbA_{\oplus}:=\int_{\bbR}^{\oplus}\bbA_k \, \rmd k,
$$
which means that for a.e. $k\in \bbR$,
$$
\big(\bbA_{\oplus}\, \bU_{\!\oplus}\big)_k=\bbA_k \, \bU_{\!k}, \quad \forall \; \bU_{\!\oplus} \in \rmD(\bbA_{\oplus}),
$$
where
\begin{equation}\label{eq.domAoplus}
\rmD(\bbA_{\oplus}) 
:= \left\{ \bU_{\!\oplus}\in\Hxk \mid \bU_{\!k} \in \rmD(\bbA_k) \mbox { for a.e. } k\in \bbR \mbox{ and } \int_{\bbR} \|\bbA_k \bU_{\!k} \|_{\Hx}^2 \, \rmd k < \infty \right\}.
\end{equation}
Relation (\ref{eq.AtoAk}) can then be rewritten in the concise form $\calF \, \bbA= \bbA_{\oplus} \,\calF$,
or equivalently
\begin{equation}\label{eq.rel-A-Aoplus}
\bbA=\calF^{*}\bbA_{\oplus} \calF.
\end{equation}
General properties can now be applied to obtain a diagonal expression of $\bbA$ summarized in the following theorem and illustrated by the commutative diagram of Figure \ref{fig.diag2}.

\begin{theorem}\label{th.diagA}
Let $\bbP$ denote the orthogonal projection defined in $\Hxy$ by $\bbP:=\chi(\bbA)$ where $\chi$ is the indicator function of $\bbR\setminus\{-\Omegam,0,\Omegam\}$. Consider the direct integral $\bbF_{\oplus}$ of the family of all generalized Fourier transforms $\bbF_k$ for $k \in \bbR$ (see Theorem {\rm \ref{th.diagAk}}), that is,
\begin{equation} \label{eq.defspectralspaceH}
\bbF_{\oplus}:=\int_{\bbR}^{\oplus} \bbF_k \,\rmd k\ \text{ which maps }\Hxk \text{ to } 
\hatH :=\int_{\bbR}^{\oplus} \hatH_k \,\rmd k.
\end{equation}
Then, for any measurable function $f:\bbR \to \bbC$, we have
\begin{equation}\label{eq.diagA}
f(\bbA)\bbP=\bbP f(\bbA)=\bbF^{*}\,f(\lambda)\,\bbF \ \mbox{ in } \rmD(f(\bbA))
\end{equation}
where 
$$
\bbF: \Hxy \to \hatH \mbox{ is defined by } \bbF := \bbF_{\oplus} \, \calF \mbox{ and satisfies } \bbF\,\bbF^{*} =\mathrm{Id}_{\hatH}.
$$ 
Thus, the restriction of $\bbF$ on the range of $\bbP$ is a unitary operator.
\end{theorem}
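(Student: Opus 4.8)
The plan is to assemble the fiberwise diagonalization of Theorem \ref{th.diagAk} through the direct-integral machinery, reducing everything to three structural facts: direct integrals respect adjoints, products, and the Borel functional calculus. First I would record the behaviour of $\bbF_\oplus$. From Theorem \ref{th.diagAk}, each $\bbF_k$ is a partial isometry with $\bbF_k^*\bbF_k = \bbP_k$ (the spectral projection $\chi(\bbA_k)$ onto $\Hxdiv$) and $\bbF_k \bbF_k^* = \mathrm{Id}_{\hatH_k}$. Since adjoints and products of decomposable operators are computed fiberwise, $\bbF_\oplus^* = \int_\bbR^\oplus \bbF_k^* \, \rmd k$, whence
\begin{equation*}
\bbF_\oplus^* \bbF_\oplus = \int_\bbR^\oplus \bbP_k \, \rmd k =: \bbP_\oplus, \qquad \bbF_\oplus \bbF_\oplus^* = \int_\bbR^\oplus \mathrm{Id}_{\hatH_k} \, \rmd k = \mathrm{Id}_{\hatH}.
\end{equation*}
Thus $\bbF_\oplus$ is a partial isometry whose restriction to the range of $\bbP_\oplus$ is unitary onto $\hatH$.

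Next I would transport the functional calculus through the direct integral. The key identity is $f(\bbA_\oplus) = \int_\bbR^\oplus f(\bbA_k) \, \rmd k$, valid for any Borel $f$; this is the standard statement that the Borel functional calculus of a decomposable self-adjoint operator is computed fiberwise (see \cite{Dix-69,Ree-78}). Taking $f = \chi$ gives $\bbP_\oplus = \chi(\bbA_\oplus)$. The fiberwise diagonalization (\ref{eq.projdiagAk1}), namely $\bbP_k f(\bbA_k) = f(\bbA_k)\bbP_k = \bbF_k^* f(\lambda) \bbF_k$, then integrates over $k$ to yield, with $f(\lambda)$ the fiberwise multiplication operator on $\hatH$,
\begin{equation*}
\bbP_\oplus \, f(\bbA_\oplus) = \int_\bbR^\oplus \bbP_k f(\bbA_k) \, \rmd k = \int_\bbR^\oplus \bbF_k^* f(\lambda) \bbF_k \, \rmd k = \bbF_\oplus^* \, f(\lambda) \, \bbF_\oplus,
\end{equation*}
and symmetrically $f(\bbA_\oplus) \bbP_\oplus = \bbF_\oplus^* f(\lambda) \bbF_\oplus$.

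Finally I would conjugate by the unitary partial Fourier transform $\calF$. Since $\bbA = \calF^* \bbA_\oplus \calF$ by (\ref{eq.rel-A-Aoplus}), the functional calculus gives $f(\bbA) = \calF^* f(\bbA_\oplus) \calF$ and in particular $\bbP = \chi(\bbA) = \calF^* \bbP_\oplus \calF$. Writing $\bbF = \bbF_\oplus \calF$ so that $\bbF^* = \calF^* \bbF_\oplus^*$, and using $\calF \calF^* = \mathrm{Id}$, the previous display conjugates to
\begin{equation*}
\bbP \, f(\bbA) = f(\bbA) \, \bbP = \calF^* \bbF_\oplus^* f(\lambda) \bbF_\oplus \calF = \bbF^* f(\lambda) \bbF,
\end{equation*}
which is (\ref{eq.diagA}). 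The surjectivity relation is then immediate: $\bbF \bbF^* = \bbF_\oplus \calF \calF^* \bbF_\oplus^* = \bbF_\oplus \bbF_\oplus^* = \mathrm{Id}_{\hatH}$, so the restriction of $\bbF$ to the range of $\bbP = \calF^* \bbP_\oplus \calF$ is unitary onto $\hatH$.

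I expect the main obstacle to be measure-theoretic rather than algebraic: one must verify that $k \mapsto \hatH_k$ is a measurable field of Hilbert spaces and that $k \mapsto \bbF_k$ is a measurable field of operators, so that $\hatH = \int_\bbR^\oplus \hatH_k \, \rmd k$ and $\bbF_\oplus = \int_\bbR^\oplus \bbF_k \, \rmd k$ are genuinely well defined. This is delicate because the spectral zones $\Lambda_\scZ(k)$, and hence the fiber structure of $\hatH_k$ — in particular the appearance of the discrete component $l^2(\Lambda_\EE(k))$ only for $|k| > k_{\rm c}$ — vary with $k$; one must check that the generalized eigenfunctions $\wlkj$ depend measurably on $(k,\lambda)$ so that the defining formula (\ref{eq.vecgen1d}) produces a measurable field. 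Once measurability is secured, the integration of the fiberwise identities and the commutation of the functional calculus with the direct integral are entirely standard.
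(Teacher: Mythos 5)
Your proposal is correct and follows essentially the same route as the paper: conjugation by the partial Fourier transform $\calF$ via (\ref{eq.rel-A-Aoplus}), commutation of the Borel functional calculus with the direct integral, fiberwise insertion of the diagonalization (\ref{eq.projdiagAk1}), and the identity $\bbF_\oplus\,\bbF_\oplus^{*}=\int_{\bbR}^{\oplus}\mathrm{Id}_{\hatH_k}\,\rmd k$ for surjectivity. The only cosmetic difference is that the paper carries out the middle step weakly, via inner products $(f\chi(\bbA_{\oplus})\bU_{\!\oplus},\bV_{\!\oplus})_{\Hxk}=\int_{\bbR}(f\chi(\bbA_k)\bU_{\!k},\bV_{\!k})_{\Hx}\,\rmd k$, rather than integrating the operator identities directly, and your closing remark on the measurability of the field $k\mapsto\hatH_k$ makes explicit a point the paper leaves implicit.
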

\begin{figure}[!t]
\centering
\includegraphics[width=0.70\textwidth]{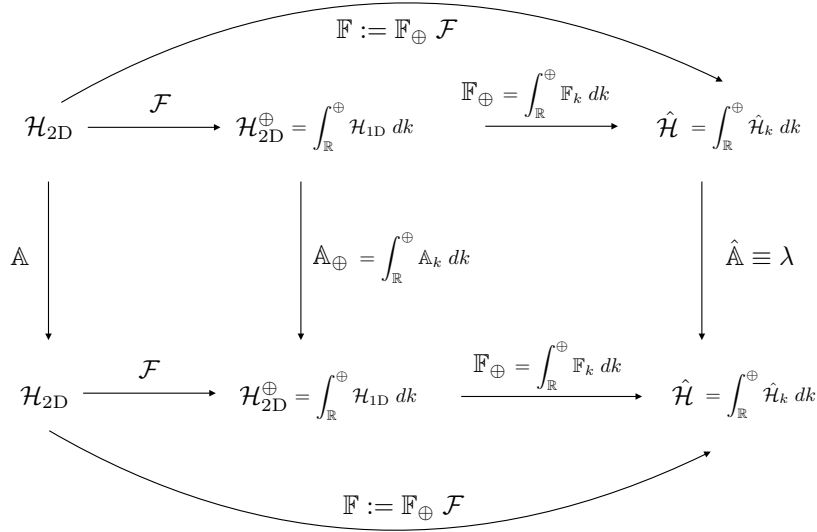}
\caption{Diagonalization diagram of $\bbA$.}
\label{fig.diag2}
\end{figure}

\begin{proof}
From the definition of $\bbP,$ the diagonalization formula (\ref{eq.diagA}) amounts to $f\chi(\bbA) = \chi f(\bbA) = \bbF^{*}f(\lambda)\bbF.$ To prove this, we start from formula (\ref{eq.rel-A-Aoplus}) which shows that $\bbA$ and $\bbA_{\oplus}$ are unitarily equivalent. So the same holds for $f(\bbA)$ and $f(\bbA_{\oplus})$ for any measurable function $f:\bbR \to \bbC$ (see \cite{Dun-63}). More precisely, using $f\chi$ instead of $f,$ we have
\begin{equation}\label{eq.calfuncAAplus}
f\chi(\bbA)=\calF^{*} f\chi(\bbA_{\oplus})\calF \, \mbox{ in } \rmD(f(\bbA))= \calF^{*}(\rmD(f(\bbA_{\oplus}))).
\end{equation}
It remains to diagonalize $f\chi(\bbA_{\oplus}).$ We first use the essential property (see \cite{Ree-78})
\begin{equation*}
f\chi(\bbA_{\oplus})=\int_{\bbR}^{\oplus}  f\chi(\bbA_k) \, \rmd k  \quad \mbox{in } \rmD(f(\bbA_{\oplus})),
\end{equation*}
(where the domain $\rmD \big(f(\bbA_{\oplus})\big)$ is defined as $\rmD(\bbA_{\oplus})$ in (\ref{eq.domAoplus}) by replacing $\bbA_{\oplus}$ and $\bbA_k$ by $f(\bbA_{\oplus})$ and $f(\bbA_k)$). Roughly speaking, this latter relation means that  the functional calculus ``commutes'' with direct integrals of operators. We deduce from this relation that for $\bU_{\!\oplus}\in \rmD(f(\bbA_{\oplus}))$ and $\bV_{\!\oplus}\in \Hxk$, one has
$$
(f\chi(\bbA_{\oplus}) \bU_{\!\oplus},  \bV_{\!\oplus})_{\Hxk} = \int_{\bbR} (f\chi(\bbA_{k}) \bU_{\!k},  \bV_{\!k})_{\Hx} \,\rmd k.
$$
Hence, using the family of diagonalization formulas (\ref{eq.projdiagAk1}), \textit{i.e.}, $f\chi(\bbA_k)= \bbP_k \, f(\bbA_k)  = \bbF^{*}_k\, f(\lambda) \,\bbF_k,$ we obtain
$$
(f\chi(\bbA_{\oplus}) \bU_{\!\oplus},  \bV_{\!\oplus})_{\Hxk}
= \int_{\bbR} (f(\lambda) \bbF_k \bU_{\!k}, \bbF_k \bV_{\!k})_{\hatH_k} \,\rmd k 
= (f(\lambda) \bbF_{\oplus} \bU_{\!\oplus},  \bbF_{\oplus} \bV_{\!\oplus})_{\hatH}
$$
which shows that
$$
f\chi(\bbA_{\oplus})=\bbF_{\oplus}^{*} f(\lambda)\bbF_{\oplus},
$$
where $\bbF_{\oplus}$ is defined in (\ref{eq.defspectralspaceH}). Note that this operator is bounded since $\operatorname{ess-sup}_{k \in \bbR} \| \bbF_k \| = 1$ (where  $\operatorname{ess-sup}$ denotes the essential sup) because $\|\bbF_k\|=1$ for all $k\in\bbR^{*}$. Combining the latter relation with (\ref{eq.calfuncAAplus}) yields (\ref{eq.diagA}).

In the particular case where $f(\lambda) \equiv 1$, relation (\ref{eq.diagA}) shows that $\chi(\bbA)=\bbF^{*}\, \bbF$, whereas similar arguments as above tell us that
$$
\bbF_{\oplus}\, \bbF_{\oplus}^{*} 
= \int_{\bbR}^{\oplus} \bbF_{k} \bbF_k^{*} \, \rmd k 
= \int_{\bbR}^{\oplus} \mathrm{Id}_{\hatH_k} \, \rmd k
= \mathrm{Id}_{\hatH},
$$
hence $\bbF\,\bbF^{*} = \bbF_{\oplus}\,\calF\,\calF^*\,\bbF_{\oplus}^* = \mathrm{Id}_{\hatH}$ since $\calF\,\calF^* = \mathrm{Id}_{\Hxk}.$ This completes the proof. 
\end{proof}

% ===============================================================================================================================

\subsection{Generalized Fourier transform for $\bbA$}
Theorem \ref{th.diagA} is the main result of the present paper. It is formulated here in an abstract form which will become clearer if we make more explicit the various objects involved in this theorem. This is the subject of this section.

\subsubsection{Characterization of the projection $\bbP$} 
By (\ref{eq.calfuncAAplus}), the orthogonal projection $\bbP := \chi(\bbA)$ can be equivalently written as 
$$
\bbP=\calF^{*} \, \chi(\bbA_{\oplus}) \, \calF 
\quad \mbox{where} \quad 
\chi(\bbA_{\oplus}) = \int_{\bbR}^{\oplus}   \chi(\bbA_{k}) \,\rmd k=\int_{\bbR}^{\oplus}  \bbP_k\, \rmd k,$$
where the $\bbP_k$'s are defined in Theorem \ref{th.diagAk}. This shows in particular that the range of $\bbP$ is given by
$$
\mathrm{Im} (\bbP) = \calF^{*} \left( \int_{\bbR}^{\oplus}  \mathrm{Im} (\bbP_k )\, \rmd k \right).
$$
where $\mathrm{Im} (\bbP_k )= \Hxdiv$ is defined in (\ref{eq.vuk}). As $\calF \div=\divk \calF$, we deduce that
\begin{equation}
\mathrm{Im} (\bbP)= \Hxydiv :=\{ (\bE, \bH, \bJ, \bK)\in \Hxy \mid \div \bH=0 \mbox{ in } \bbR^{2}_{\pm} \mbox{ and } \div \bK=0 \mbox{ in } \bbR^2_+\}.
\label{eq.Hxydiv}
\end{equation}

In the same way, $\ker(\bbP)=\ker(\bbA)\oplus \ker(\bbA-\Omegam)\oplus \ker(\bbA+\Omegam)$ is described by
$$
\ker (\bbA)=\calF^{*} \left( \int_{\bbR}^{\oplus} \ker (\bbA_k )\, \rmd k \right) \mbox{ and } \ker (\bbA\mp \Omegam)=\calF^{*} \left( \int_{\bbR}^{\oplus} \ker (\bbA_k \mp \Omegam )\, \rmd k \right),
$$
where $\ker(\bbA_k)$ and $\ker (\bbA_k \pm \Omegam )$ are characterized in Proposition \ref{prop.eigvalAk}. As $\calF \nabla= \nabla_k \calF$, we have
\begin{eqnarray}
\ker(\bbA) & = & \{ (0, \widetilde{\bPi}\, \nabla \phi, 0 , 0)^{\top} \ \mid \ \phi\in H_0^1(\bbR^2_-)\},  \label{eq.kernelA}\\
\ker(\bbA \mp \Omegam ) & = & \left\{ \left(0, \,\bPi \,\nabla \phi, \,0 , \pm \rmi\mu_0\Omegam \,\nabla \phi \right)^{\top} \ \mid \ \phi\in H_0^1(\bbR^2_+)\right\} \label{eq.eigenspace},
\end{eqnarray}
where $\widetilde{\bPi}$ is the extension by $0$ of a 2D vector field defined on $\bbR^2_-$ to the whole plane $\bbR^2$ and $H_0^1(\bbR^2_{\pm}) := \{ \phi \in H^1(\bbR^2_{\pm})\ \mid \ \phi(0,y) = 0 \mbox{ for a.e. } y\in \bbR \}.$

\subsubsection{Description of the spectral space $\hatH$}
Consider now the spectral space $\hatH$ defined in (\ref{eq.defspectralspaceH}) where  each fiber $\hatH_k$ is given in (\ref{eq.specspaceHk}).
The elements of $\hatH$ are then vector fields $\bhatU: k\to \bhatU_k \in \hatH_k$ such that
$$
\| \bhatU\|_{\hatH}^2:=\int_{\bbR} \| \bhatU_k\|^2_{\hatH_k}\, \rmd k<\infty.
$$
Each space $\hatH_k$ is composed of $L^2$-spaces defined on the various zones $\Lambda_{\scZ}(k),$ which are vertical sections of the spectral zones $\Lambda_{\scZ}$ represented in Figure \ref{fig.speczones} (and defined in (\ref{eq.defspeczones}) and (\ref{eq.defspeczonesZEE})). The above formula gathers the spaces associated with all sections to create a space of fields defined on the zones $\Lambda_{\scZ},$ $\scZ \in \calZ.$ Indeed, by Fubini's theorem, we see that $\hatH$ can be identified with the following direct sum:
\begin{equation}
\hatH=\bigoplus \limits_{\scZ \in \calZ} L^{2}(\Lambda_{\scZ})^{\operatorname{card}(J_{\scZ})}=L^2(\Lambda_{\DD})^2 \oplus L^2(\Lambda_{\DE})\oplus L^2(\Lambda_{\DI})^2 \oplus  L^2(\Lambda_{\EI})\oplus   L^2(\Lambda_{\EE}).
\label{eq.ident-hatH}
\end{equation}
As we did for the generalized eigenfunctions $\bw_{k,\lambda,j}$, we denote somewhat abusively by $\bhatU(k,\lambda,j)$ the fields of $\hatH$, where it is understood that $j\in J_{\scZ}$ while $(k,\lambda)\in \Lambda_{\scZ}$ for the various zones $\Lambda_{\scZ}$, $\scZ\in \calZ$. The norm in $\hatH$ can then be rewritten as
$$
\| \bhatU \|_{\hatH}^2=\sum_{\scZ \in \calZ\setminus\{\EE\}}\sum_{j\in J_\scZ} \int_{ \Lambda_{\scZ}} |\bhatU(k,\lambda,j) |^2 \,\rmd \lambda \,\rmd k+  \sum_{\pm }\int_{|k|>k_{\rm c}}| \bhatU(k,\pm \lambda_{E}(k),0) |^2  \,\rmd k.
$$

\subsubsection{The generalized Fourier transform $\bbF$ and its adjoint} 
We show here that, as for the reduced Hamiltonian, the generalized Fourier transform $\bbF$ appears as a ``decomposition'' operator on a family of generalized eigenfunctions of $\bbA,$ denoted by $(\bbW_{k,\lambda,j})$ and constructed from the generalized eigenfunctions $(\wlkj)$ of the reduced Hamiltonian $\bbA_k$ (see (\ref{eq:defwijk})) via the following formula:
\begin{equation}\label{eq.defvecgen2D}
\forall \scZ \in \calZ, \ \forall (k,\lambda) \in \Lambda_{\scZ}, \ \forall j\in J_{\scZ}, \ \forall (x,y) \in \bbR^2,
\quad \bbW_{k,\lambda,j}(x,y) := \wlkj(x) \, \frac{ \rme^{\rmi k y}}{\sqrt{2 \pi}} .
\end{equation}
Similarly the adjoint $\bbF^{*}$ is a ``recomposition'' operator in the sense that its ``recomposes'' a function $\bU\in \Hxy$ from its spectral components $\bhatU(k,\lambda,j) \in \hatH$ which appears as ``coordinates'' on the spectral basis $(\bbW_{k,\lambda,j})$ of $\bbA$. 

As $\bbF$ (respectively, $\bbF^{*}$) is bounded in $ \Hxy$ (respectively, $\hatH$), if suffices to define it on a dense subspace of $ \Hxy$ (respectively, $\hatH$). Consider first the case of the physical space $\Hxy.$ In the same way as the $1\rmD$-case (see Remark \ref{rem.wL21D}), we define 
$$
\Hxys := L^2_s(\bbR^2) \times L^2_s(\bbR^2)^2 \times L^2_s(\bbR^2_+) \times L^2_s(\bbR^2_+)^2 \quad\mbox{for } s \in \bbR,
$$
where $L^2_s(\calO) := \{ u \in L^2_{\rm loc}(\calO) \mid (1 + x^2)^{s/2}\,(1 + y^2)^{s/2}\,u \in L^2(\calO)\}$ for $\calO=\bbR^2$ or $\calO=\bbR^2_+.$ Note that $\Hxyms$ and $\Hxyps$ are dual spaces, the corresponding duality bracket being denoted $\langle \cdot, \cdot \rangle_{\indHxy,s}.$ It is clear that $\Hxys$ is dense in $\Hxy$ for all $s > 0,$ but we shall actually choose $s > 1/2:$ the key point is that each function $\bbW_{k,\lambda,j}$, being bounded, belongs to $\Hxyms$.

For the space $\hatH$, this is a little more tricky. We first define 
%the support (in the $(k,\lambda)$-plane) of $\bhatU \in \hatH$
%$$
%\mbox{supp } \bhatU := \bigcup_{\scZ \in \calZ} \bigcup_{j \in  J_{\scZ}} \mbox{supp } \bhatU(\cdot, \cdot,j)
%$$
%and 
$\hatH_{\rm c}$ the subspace of $\hatH$ made of compactly supported functions and we introduce the lines 
$$
D_0 := \bbR \times \{0\} \quad\mbox{and}\quad D_{\pm \rm m} := \bbR \times \{\pm \Omegam\},
$$
as well as the finite set $P_{\rm c} := \{(k_{\rm c}, \lambda_{\rm c}), (k_{\rm c}, -\lambda_{\rm c}), (-k_{\rm c}, \lambda_{\rm c}), (-k_{\rm c},- \lambda_{\rm c})\}$. Then we define the space
$$
\hatH_{\rm d} := \{ \bhatU \in \hatH_{\rm c} \, \mid \ \mbox{supp } \bhatU \cap \big( D_0 \cup D_{+\rm m} \cup D_{-\rm m} \cup P_{\rm c} \big) = \emptyset \}.
$$
Since $D_0$, $D_{\pm \rm m}$ and $P_{\rm c}$ have Lebesgue measure 0 in $\bbR^2$, $\hatH_{\rm d}$ is clearly dense in $\hatH.$

\begin{proposition}
Let $s > 1/2.$ The generalized Fourier transform $\bbF \bU$ of all $\bU \in \Hxyps$ is explicitly given in each zone $\Lambda_\scZ,$ $\scZ\in \calZ,$ by
\begin{equation}\label{eq.vecgen2d}
\bbF\bU(k,\lambda,j) =\langle  \bU, \bbW_{k,\lambda,j}  \rangle_{\indHxy,s} \quad\mbox{for all } (k,\lambda) \in \Lambda_\scZ \mbox{ and } j\in J_{\scZ},
\end{equation}
where the $\bbW_{k,\lambda,j}$'s are defined in {\rm (\ref{eq.defvecgen2D})}. Furthermore, for all $\bhatU \in \hatH_{\rm d},$ we have
\begin{equation}\label{eq.adjointtransform}
\bbF^{*}\bhatU = \sum_{\scZ \in \calZ \setminus \{\EE \} } \sum_{j\in J_{\scZ}} \int_{\Lambda_{\scZ} } \bhatU(k,\lambda,j)\, \bbW_{k,\lambda, j} \,\rmd \lambda\, \rmd k 
+ \sum_{\pm} \int_{|k|>k_{\rm c} } \bhatU(k,\pm \lambda_\scE(k) ,0)\, \bbW_{ k,\pm \lambda_\scE(k) ,0} \, \rmd k,
\end{equation}
where the integrals are Bochner integrals with values in $\Hxyms$.
\label{p.FandAdjoint}
\end{proposition}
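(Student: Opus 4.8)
The plan is to deduce both assertions from the one-dimensional results of Theorem~\ref{th.diagAk} and Proposition~\ref{prop.adjointtransformFk} by exploiting the factorization $\bbF=\bbF_{\oplus}\calF$ of Theorem~\ref{th.diagA} together with the fibered structure $\bbF_{\oplus}=\int_{\bbR}^{\oplus}\bbF_k\,\rmd k$. The observation that makes this reduction legitimate is that the partial Fourier transform $\calF$ respects the weighted spaces fiberwise: if $\bU\in\Hxyps$ with $s>1/2$, then Plancherel's identity in the $y$ variable (applied componentwise) shows that $(1+x^2)^{s/2}\,\calF\bU$ lies in $L^2(\bbR_x\times\bbR_k)$, so that $\calF\bU(\cdot,k)\in\Hxs$ for almost every $k$. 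The one-dimensional transforms $\bbF_k$ and $\bbF_k^{*}$ may therefore be applied to the fibers of $\calF\bU$ and of $\bhatU$.

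For the direct formula~\eqref{eq.vecgen2d}, I would fix $\bU\in\Hxyps$ and use $(\bbF\bU)(k,\cdot)=\bbF_k\big(\calF\bU(\cdot,k)\big)$ for a.e. $k$, so that the definition~\eqref{eq.vecgen1d} of $\bbF_k$ gives $\bbF\bU(k,\lambda,j)=\langle\calF\bU(\cdot,k),\wlkj\rangle_{\indHx,s}$ for $(k,\lambda)\in\Lambda_\scZ$, $j\in J_\scZ$. It then remains to identify this one-dimensional bracket with the two-dimensional one. Since $\bbW_{k,\lambda,j}(x,y)=\wlkj(x)\,\rme^{\rmi ky}/\sqrt{2\pi}$ by~\eqref{eq.defvecgen2D}, and since the inner products on $\Hx$ and $\Hxy$ carry the same constant component weights, Fubini's theorem and the definition~\eqref{eq.deffour} of $\calF$ yield
\begin{equation*}
\langle\bU\,,\bbW_{k,\lambda,j}\rangle_{\indHxy,s}
=\Big\langle\tfrac{1}{\sqrt{2\pi}}\int_{\bbR}\bU(\cdot,y)\,\rme^{-\rmi ky}\,\rmd y\,,\wlkj\Big\rangle_{\indHx,s}
=\langle\calF\bU(\cdot,k)\,,\wlkj\rangle_{\indHx,s},
\end{equation*}
which is exactly~\eqref{eq.vecgen2d}. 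The interchange of the $y$-integration with the $x$-duality bracket is licit because $\wlkj$ is bounded and $s>1/2$.

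For the adjoint formula~\eqref{eq.adjointtransform}, I would start from $\bbF^{*}=\calF^{*}\bbF_{\oplus}^{*}$ with $\bbF_{\oplus}^{*}=\int_{\bbR}^{\oplus}\bbF_k^{*}\,\rmd k$, giving $(\bbF_{\oplus}^{*}\bhatU)_k=\bbF_k^{*}\bhatU_k$ for a.e. $k$. For $\bhatU\in\hatH_{\rm d}$, the support of $\bhatU$ avoids the lines $D_0,D_{\pm\rm m}$ and the points $P_{\rm c}$, so the fiber $\bhatU_k$ lies in $\hatH_{k,{\rm d}}$ for a.e. $k$; applying~\eqref{eq.adjointtransformFk} fiberwise expresses $\bbF_k^{*}\bhatU_k$ as a Bochner integral in $\Hxms$ over the sections $\Lambda_\scZ(k)$, together with the point contribution at $\pm\lambda_\scE(k)$ when $|k|>k_{\rm c}$. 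Applying $\calF^{*}$, whose kernel $\rme^{\rmi ky}/\sqrt{2\pi}$ and $k$-integration recombine with $\wlkj$ to reconstruct $\bbW_{k,\lambda,j}$ through~\eqref{eq.defvecgen2D}, the $\rmd k$ integration then glues the sectionwise integrals $\int_{\Lambda_\scZ(k)}\,\rmd\lambda$ into $\int_{\Lambda_\scZ}\,\rmd\lambda\,\rmd k$ and the point contributions into $\int_{|k|>k_{\rm c}}\,\rmd k$, producing~\eqref{eq.adjointtransform}.

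The hard part will be this last step, namely justifying the interchange between the $k$-integration performed by $\calF^{*}$ and the fiberwise Bochner integrals in $\lambda$. This is a Fubini-type theorem for Bochner integrals, valid once the $\Hxyms$-valued integrand $(k,\lambda)\mapsto\bhatU(k,\lambda,j)\,\bbW_{k,\lambda,j}$ is shown to be Bochner integrable over $\Lambda_\scZ$. Integrability rests on a uniform bound for $\|\bbW_{k,\lambda,j}\|_{\Hxyms}$ on compact subsets of the $(k,\lambda)$-plane avoiding $D_0\cup D_{+\rm m}\cup D_{-\rm m}\cup P_{\rm c}$: indeed, by~\eqref{eq.defvecgen2D} one has $\|\bbW_{k,\lambda,j}\|_{\Hxyms}^2=(2\pi)^{-1}\big(\int_{\bbR}(1+y^2)^{-s}\,\rmd y\big)\,\|\wlkj\|_{\Hxms}^2$, the $y$-integral converging precisely because $s>1/2$, while the uniform control of $\|\wlkj\|_{\Hxms}$ is exactly the content of Remark~\ref{rem.Fk}. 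Since $\bhatU\in\hatH_{\rm d}$ is compactly supported away from these sets, the product is integrable and the interchange is legitimate. Having established both formulas on the dense subspaces $\Hxyps$ and $\hatH_{\rm d}$, the boundedness of $\bbF$ and $\bbF^{*}$ from Theorem~\ref{th.diagA} extends them, the Bochner integrals in~\eqref{eq.adjointtransform} for a general $\bhatU\in\hatH$ being understood through the approximation by compactly supported functions described in Remark~\ref{rem.Fk}.
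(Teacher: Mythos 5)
Your proof is correct and follows essentially the same route as the paper's: factor $\bbF=\bbF_{\oplus}\,\calF$, apply the one-dimensional results (the definition (\ref{eq.vecgen1d}) of $\bbF_k$ and Proposition \ref{prop.adjointtransformFk}) fiberwise in $k$, and glue with a Fubini-type theorem for Bochner integrals, justified by the uniform $\Hxyms$-bound on $\bbW_{k,\lambda,j}$ over the compact support of $\bhatU$ away from $D_0\cup D_{+\rm m}\cup D_{-\rm m}\cup P_{\rm c}$. The one refinement the paper adds concerns the direct formula: instead of your Plancherel bound, which defines $\calF\bU(\cdot,k)\in\Hxps$ only for a.e.\ $k$, it observes that $\calF(L^2_s(\bbR_y))=H^{s}(\bbR_k)\hookrightarrow C_0(\bbR_k)$ for $s>1/2$, so that $\calF\bU\in C_0(\bbR_k,\Hxps)$ and the identity (\ref{eq.vecgen2d}) holds for \emph{every} $(k,\lambda)\in\Lambda_\scZ$ as stated, not merely for almost every $k$.
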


\begin{remark}\label{propbbW}
The content of Remark {\rm \ref{rem.Fk}} could be transposed here with obvious changes. In particular, for general $\bU \in \Hxy$ or $\bhatU \in \hatH,$ the expressions of $\bbF\bU$ or $\bbF^{*}\bhatU$ are deduced from the above ones by a limit process on the domain of integration (exactly as for the usual Fourier transform of a square integrable function). In the sequel, this process will be implicitly understood when applying formulas {\rm (\ref{eq.vecgen2d})} and {\rm (\ref{eq.adjointtransform})} for general $\bU$ and $\bhatU.$
\end{remark}

\begin{proof}
Let $\bU\in \Hxyps$ with $s>1/2.$ By definition, $\bbF := \bbF_{\oplus}\,\calF$ where $\bbF_{\oplus}$ is defined in (\ref{eq.defspectralspaceH}). Hence, in each zone $\Lambda_\scZ,$ $\scZ \in \calZ,$ we have
\begin{equation*}
\bbF\bU(k,\lambda,j) = \bbF_k (\calF \bU(\cdot,k))(\lambda,j), 
\quad\mbox{for a.e. } (k,\lambda)\in \Lambda_\scZ \mbox{ and } j\in J_{\scZ}.
\end{equation*}
Note that $\Hxyps = \Hxps \otimes L^2_s(\bbR_y)$ and $\calF(L^2_s(\bbR_y))=H^{s}(\bbR_k)$ where $H^{s}(\bbR_k)$  stands for the Sobolev space of index $s$, thus $\calF \bU \in \Hxps \otimes  H^{s}(\bbR_k)$. As $s>1/2$, $H^{s}(\bbR_k)$ is included in  $C_0(\bbR_k),$ the space of continuous function on $\bbR_k$, so $\calF \bU \in C_0(\bbR_k, \Hxps )$ and one can define $\calF \bU (\cdot, k)\in \Hxps$ for all real $k$. Hence, using the respective definitions (\ref{eq.deffour}) and (\ref{eq.vecgen1d}) of $\calF$ and $\bbF_k$ leads to
$$
\bbF\bU(k,\lambda,j) = \left\langle  \int_{\bbR} \bU(\cdot,y)\, \frac{ \rme^{-\rmi k y}}{\sqrt{2\pi}}\, \rmd y \, , \wlkj \right\rangle_{\indHx,s},
\quad\mbox{for a.e. } (k,\lambda)\in \Lambda_\scZ \mbox{ and } j\in J_{\scZ}, 
$$
which yields (\ref{eq.vecgen2d}) thanks to a Fubini's like theorem for Bochner integrals (which applies here since $\bU \in \Hxyps$ and $\bbW_{k,\lambda,j} \in \Hxyms$).

We now prove (\ref{eq.adjointtransform}). Recall that the adjoint of a direct integral of operators is the direct integral of their adjoints \cite{Wan-06}. Hence, 
$$
\bbF^{*} = \calF^{*}\bbF_{\oplus}^{*} \quad\mbox{where}\quad
\bbF_{\oplus}^{*} = \int_{\bbR}^{\oplus} \bbF_k^{*}  \, \rmd k \mbox{ maps } \hatH \mbox{ to } \Hxk.
$$
Let $\bhatU\in \hatH_{\rm d}.$ For a.e. $k\in \bbR$, $\bhatU_k := \bhatU(k,\cdot,\cdot)\in \hatH_k$ and its support in $\lambda$ is compact.
Moreover $\bbF_{\oplus}^{*} \bhatU(\cdot,k) \equiv \bbF_k^{*}\bhatU_k$  vanishes for $k$ large enough. So
$$
\bbF^{*}\bhatU(x,y) = \calF^{*}\bbF_{\oplus}^{*} \bhatU (x,y) 
= \int_{\bbR} \bbF_k^{*}\bhatU_k(x) \, \frac{\rme^{\rmi k y}}{\sqrt{2\pi}} \, \rmd k \quad\mbox{for a.e. } (x,y) \in \bbR^2.
$$
Note that the support of $\bhatU_k$ does not contain $0$ and $\pm \Omegam$, neither $\pm \lambda_{\rm c}$ when $|k|=k_{\rm c}$. 
Thus, by Proposition \ref{prop.adjointtransformFk}, $ \bbF_k^{*}\bhatU_k$ is given by formula (\ref{eq.adjointtransformFk}) with
 $\bhatU_k$ instead of  $\bhatU$. Then it suffices to apply Fubini's theorem again to obtain formula  (\ref{eq.adjointtransform}), using the fact that the function $(k, \lambda) \mapsto \bbW_{k,\lambda, j}$ is bounded in $\Hxyms$ when $(k,\lambda)$ varies in the support of $\bhatU$.
\end{proof}

% ============================================================================
\subsection{Spectrum of $\bbA$}
The preceding results actually show that the spectrum $\sigma(\bbA)$ of $\bbA$ is obtained by superposing the spectra $\sigma(\bbA_k)$ of $\bbA_k$ for all $k \in \bbR.$ More precisely, we have the following property.

\begin{corollary}
The spectrum of $\bbA$ is the whole real line: $\sigma(\bbA)=\bbR.$ The point spectrum $\sigma_{\rm p}(\bbA)$ is composed of eigenvalues of infinite multiplicity: $\sigma_{\rm p}(\bbA)=\{-\Omegam,0,\Omegam\}$ if $\Omegae\neq \Omegam$ and $\sigma_{\rm p}(\bbA)=\{-\Omegam, -\Omegam/\sqrt{2}, 0, \Omegam/\sqrt{2},\Omegam\}$ if $\Omegae = \Omegam.$
\label{c.spectrumA}
\end{corollary}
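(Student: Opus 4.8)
The plan is to read off both assertions directly from the direct-integral decomposition $\bbA=\calF^{*}\bbA_{\oplus}\calF$, with $\bbA_{\oplus}=\int_{\bbR}^{\oplus}\bbA_k\,\rmd k$, established in (\ref{eq.rel-A-Aoplus}), together with the complete description of $\sigma(\bbA_k)$ obtained in \S\ref{s.spec-meas-Ak}. Since $\calF$ is unitary, $\sigma(\bbA)=\sigma(\bbA_{\oplus})$ and the spectral measure of $\bbA$ is $\bbE(\cdot)=\calF^{*}\big(\int_{\bbR}^{\oplus}\bbE_k(\cdot)\,\rmd k\big)\calF$. I would then invoke the standard spectral theory of decomposable operators (\cite{Ree-78}): on the one hand $\lambda\in\sigma(\bbA_{\oplus})$ if and only if, for every $\varepsilon>0$, the set $\{k\in\bbR \mid \sigma(\bbA_k)\cap(\lambda-\varepsilon,\lambda+\varepsilon)\neq\emptyset\}$ has positive Lebesgue measure; on the other hand, by Remark \ref{rem.spec}, $\lambda\in\sigma_{\rm p}(\bbA)$ (i.e. $\bbE(\{\lambda\})\neq 0$) if and only if $\{k\in\bbR \mid \bbE_k(\{\lambda\})\neq 0\}=\{k\mid \lambda\in\sigma_{\rm p}(\bbA_k)\}$ has positive measure.

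To prove $\sigma(\bbA)=\bbR$ I would use the fact that $\sigma(\bbA_k)$ contains, for every $k\neq 0$, the eigenvalues $0,\pm\Omegam$ together with the closures of the zone sections $\Lambda_{\scZ}(k)$, so it suffices to exhibit, for each $\lambda\in\bbR$, a positive-measure set of $k$ with $\lambda\in\sigma(\bbA_k)$. The points $0,\pm\Omegam$ lie in $\sigma(\bbA_k)$ for all $k\neq 0$. For $\lambda\notin\{0,\pm\Omegam\}$ I would read off the zone geometry from Lemma \ref{lem.disp}: if $0<|\lambda|<\lambda_{\rm c}$ then $|\lambda|<\min(\lambda_0(k),\lambda_\scI(k))$ on an open interval of $k$ around $k_{\rm c}$, so $(k,\lambda)\in\zEI$; if $\lambda_{\rm c}<|\lambda|<\max(\Omegae,\Omegam)$ and $|\lambda|\neq\Omegam$, then since $\inf_k\max(\lambda_0(k),\lambda_\scI(k))=\lambda_{\rm c}<|\lambda|$ and $\lambda_\scD(k)\geq\max(\Omegae,\Omegam)>|\lambda|$, one has $(k,\lambda)\in\zDE$ on an open set of $k$ near $k_{\rm c}$; and if $|\lambda|>\max(\Omegae,\Omegam)$ then $(k,\lambda)\in\zDD$ for $k$ near $0$. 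Thus a positive-measure set of $k$ sees $\lambda$ in its spectrum for all $\lambda$ outside a finite set of thresholds, and these thresholds are captured by closedness of $\sigma(\bbA)$; hence $\sigma(\bbA)=\bbR$.

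For the point spectrum I would separate the two families of fibre eigenvalues. By Proposition \ref{prop.eigvalAk}, the values $0$ and $\pm\Omegam$ belong to $\sigma_{\rm p}(\bbA_k)$ for every $k\neq 0$, i.e. on a set of full measure, so they are eigenvalues of $\bbA$, and their multiplicity is infinite because each fibre eigenspace is already infinite-dimensional. The remaining candidates are the plasmonic eigenvalues $\pm\lambda_\scE(k)$, which occur only for $|k|>k_{\rm c}$. Here the decisive ingredient is the behaviour of $k\mapsto\lambda_\scE(k)$ from Lemma \ref{lem.sing}: if $\Omegae\neq\Omegam$ this map is strictly monotonic on $[k_{\rm c},+\infty)$, so for each fixed $\lambda$ the level set $\{k\mid\lambda=\lambda_\scE(|k|)\}$ is finite, hence of measure zero, and contributes nothing to $\sigma_{\rm p}(\bbA)$; consequently $\sigma_{\rm p}(\bbA)=\{-\Omegam,0,\Omegam\}$ in this case. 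If instead $\Omegae=\Omegam$, then $\lambda_\scE(k)\equiv\Omegam/\sqrt{2}$ on the whole set $|k|>k_{\rm c}$, which has infinite measure, so $\pm\Omegam/\sqrt{2}$ become eigenvalues of $\bbA$ of infinite multiplicity (the one-dimensional fibre eigenspaces being integrated over a set of positive measure), giving $\sigma_{\rm p}(\bbA)=\{-\Omegam,-\Omegam/\sqrt{2},0,\Omegam/\sqrt{2},\Omegam\}$.

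The only genuinely delicate point, and the step I expect to be the main obstacle, is this dichotomy for the plasmon branch: although $\lambda_\scE(k)$ is a bona fide eigenvalue of each fibre $\bbA_k$ with $|k|>k_{\rm c}$, a \emph{non-constant} dispersion branch is absorbed into the (absolutely continuous) spectrum of the fibered operator $\bbA$, whereas a \emph{flat} branch ($\Omegae=\Omegam$) produces honest eigenvalues of $\bbA$. Making this precise through the measure-zero versus positive-measure alternative provided by the monotonicity in Lemma \ref{lem.sing}, and checking that the surviving eigenvalues have infinite multiplicity, is where the care lies; by contrast, the covering argument for $\sigma(\bbA)=\bbR$ is routine bookkeeping with the spectral zones.
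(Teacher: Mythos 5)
Your proposal is correct and follows essentially the same route as the paper: reduce to the fibers $\bbA_k$ via the partial Fourier transform, get $\sigma(\bbA)=\bbR$ by checking that every $\lambda$ outside finitely many thresholds lies in some zone section $\Lambda_\scZ(k)$ for a positive-measure set of $k$, and settle the point spectrum by the measure-zero (strictly monotone branch, $\Omegae\neq\Omegam$) versus positive-measure (flat branch, $\Omegae=\Omegam$) dichotomy for $\lambda_\scE(k)$, together with the infinite-dimensional kernels at $0,\pm\Omegam$. The only cosmetic difference is that you invoke the abstract spectral criteria for decomposable operators from \cite{Ree-78}, whereas the paper reads the same facts off the concrete spectral space $\hatH$ in (\ref{eq.ident-hatH}) by asking when multiplication by $\boldsymbol{1}_{\Lambda}(\lambda)$ or $\boldsymbol{1}_{\{a\}}(\lambda)$ vanishes.
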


\begin{proof}
It is based on Remark \ref{rem.spec}. First consider an interval $\Lambda = (a,b) \subset  \bbR\setminus\{-\Omegam, 0,\Omegam \}$ with $a<b.$ The diagonalization formula (\ref{eq.diagA}) applied to the indicator function $f={\bf 1}_{\Lambda}$ shows that the spectral projection $\bbE(\Lambda)$ is given by $\bbE(\Lambda) = \bbF^{*}\,{\bf 1}_{\Lambda}\,\bbF$ (since $\chi\,{\bf 1}_{\Lambda} = {\bf 1}_{\Lambda}).$ Moreover, from the identification (\ref{eq.ident-hatH}) of the spectral space $\hatH,$ we see that the operator of multiplication by ${\bf 1}_{\Lambda}(\lambda)$ in $\hatH$ cannot vanish. Hence $\bbE(\Lambda) \neq 0$ for all non empty $\Lambda = (a,b) \subset  \bbR\setminus\{-\Omegam, 0,\Omegam \}.$ As $\sigma(\bbA)$ is closed, we conclude that $\sigma(\bbA)=\bbR.$ 

Suppose now that $\Lambda = \{a\} \subset \bbR\setminus\{-\Omegam, 0,\Omegam \}.$ We still have $\bbE(\Lambda) = \bbF^{*}\,{\bf 1}_{\Lambda}\,\bbF$ but here, the operator of multiplication by ${\bf 1}_{\{a\}}(\lambda)$ in $\hatH$ always vanishes except if $\Omegae = \Omegam$ and $a = \pm\Omegam/\sqrt{2}.$ To understand this, first consider a two-dimensional zone $\Lambda_\scZ$ for $\scZ \in \calZ\setminus\{\EE\}.$ As $\bbR \times \{a\}$ is one-dimensional, its intersection with $\Lambda_\scZ$ has measure zero in $\Lambda_\scZ,$ so the operator of multiplication by ${\bf 1}_{\{a\}}(\lambda)$ in $L^2(\Lambda_\scZ)$ vanishes. On the other hand, for the one-dimensional zone $\Lambda_\EE,$ several situations may occur. If $\Omegae\neq \Omegam,$ the intersection of $\bbR \times \{a\}$ and $\Lambda_\EE$ is either empty or consists of two points (which are symmetric with respect to the $\lambda$-axis), hence this intersection still have measure zero in $\Lambda_\EE.$ If $\Omegae = \Omegam,$ this intersection is empty when 
$a \notin \{-\Omegam/\sqrt{2},+\Omegam/\sqrt{2}\},$ whereas it is one half of $\Lambda_\EE$ when $a = \pm\Omegam/\sqrt{2}$ (the half located in the half-plane $\pm \lambda > 0).$ In the latter case, we see that the range of  the projection $\bbE(\{\pm \Omegam/\sqrt{2}\})$ is isomorphic, via the generalized Fourier transform $\bbF$, to the infinite dimensional space $L^2(\{|k|>k_{\rm c}\})$. To sum up, if $\Omegae \neq \Omegam,$ there is no eigenvalue of $\bbA$ in $\bbR\setminus\{-\Omegam, 0,\Omegam \},$ whereas if $\Omegae = \Omegam,$ the only eigenvalues of $\bbA$ located in $\bbR\setminus\{-\Omegam, 0,\Omegam \}$ are $\pm\Omegam/\sqrt{2}$ and they both have infinite multiplicity.

Finally, formulas  (\ref{eq.kernelA}) and (\ref{eq.eigenspace}) show that $0$ and $\pm\Omegam$ are also eigenvalues of infinite multiplicity.
\end{proof}

% ============================================================================
\subsection{Generalized eigenfunction expansions for the evolution problem}
As an application of the above results, let us express the solution $\bU(t)$ of our initial time-dependent Maxwell's equations written as the Schr\"odinger equation (\ref{eq.schro}).

Consider first the free evolution of the system, that is, when $\bG = 0.$ In this case, $\bU(t) = \rme^{-\rmi\bbA t} \bU_0$ for all $t>0,$ where $\bU_0 = \bU(0)\in \Hxy$ is the initial state. Theorem \ref{th.diagA} provides us a diagonal expression of $\rme^{-\rmi\bbA t}.$ More precisely, $\bbP\,\rme^{-\rmi\bbA t} = \rme^{-\rmi\bbA t}\,\bbP = \bbF^*\,\rme^{-\rmi\lambda t}\,\bbF.$ Hence, if we restrict ourselves to initial conditions $\bU_0 \in \Hxydiv = \mathrm{Im} (\bbP)$ (see (\ref{eq.Hxydiv})), we simply have $\bU(t) = \bbF^*\,\rme^{-\rmi\lambda t}\,\bbF\,\bU_0.$ Thanks to (\ref{eq.vecgen2d}) and (\ref{eq.adjointtransform}), this expression becomes
$$
\bU(t) = \sum_{\scZ \in \calZ} \sum_{j \in J_{\scZ}} \int_{\Lambda_{\scZ} } \langle  \bU_0, \bbW_{k,\lambda,j} \rangle_{\indHxy,s} \ \bbW_{k,\lambda,j}\,\rme^{-\rmi \lambda t} \,\rmd\lambda\,\rmd k.
$$
For simplicity, we have condensed here the various sums in the right-hand side of (\ref{eq.adjointtransform}) in a single sum which includes the last term corresponding to $\scZ = \EE$ (which is of course abusive for this term, since it is a single integral represented here by a double integral). The above expression is a \emph{generalized eigenfunction expansion} of $\bU(t).$ It tells us that $\bU(t)$ can be represented as a superposition of the time-harmonic waves $\bbW_{k,\lambda,j}\,\rme^{-\rmi \lambda t}$ modulated by the spectral components $\langle  \bU_0, \bbW_{k,\lambda,j} \rangle_{\indHxy,s}$ of the initial state. Strictly speaking, the above expression is valid if we are in the context of Proposition \ref{p.FandAdjoint}, \textit{i.e.}, if $\bU_0 \in \Hxyps$ with $s > 1/2$ and $\bbF\,\bU_0 \in \hatH_{\rm d}.$ For general $\bU_0 \in \Hxydiv,$ the limit process mentioned in Remark \ref{propbbW} is implicitly understood.

Consider now equation (\ref{eq.schro}) with a non-zero excitation $\bG \in C^{1}(\bbR^{+},\Hxydiv).$ For simplicity, we assume zero initial conditions. In this case, the Duhamel integral formula (\ref{eq.duhamel}) writes as
$$
\bU(t) = \int_{0}^{t} \bbF^{*}\,\rme^{-\rmi \lambda (t-s)}\,\bbF \, \bG(s) \,\rmd s,
$$ 
which yields the following generalized eigenfunction expansion:
$$
\bU(t) = \sum_{\scZ \in \calZ} \sum_{j \in J_{\scZ}} \int_{0}^{t}  \int_{\Lambda_{\scZ} } \langle  \bG(s), \bbW_{k,\lambda,j} \rangle_{\indHxy,s} \ \bbW_{k,\lambda,j}\,\rme^{-\rmi \lambda (t-s)} \,\rmd\lambda \,\rmd k \,\rmd s.
$$

As mentioned in the introduction, we are especially interested in the case of a time-harmonic excitation which is switched on at an initial time, that is, $\bG(t) = H(t)\,\rme^{-\rmi \omega t}\,\bG_\omega$ for given $\omega \in \bbR$ and $\bG_\omega \in \Hxydiv,$ where $H(t)$ denotes the Heaviside step function (\textit{i.e.}, $H := {\bf 1}_{\bbR^{+}}).$ In this particular situation, Duhamel's formula simplifies as
$$
\bU(t) = \phi_{\omega}(\bbA,t) \,\bG_\omega = \bbF^{*}\, \phi_{\omega}(\lambda,t)\, \bbF \, \bG_\omega,
$$
where $\phi_{\omega}(\cdot,t)$ is the bounded continuous function defined for non negative $t$ and real $\lambda$ by
\begin{equation*}
\phi_{\omega}(\lambda,t):= \rme^{-\rmi \lambda t}\int_{0}^{t} \rme^{-\rmi s (\omega -\lambda )} \rmd s =\left\lbrace\begin{array}{ll}
\displaystyle \rmi \,\frac{\rme^{-\rmi \, \lambda\, t} -\rme^{-\rmi \, \omega \, t}}{\lambda-\omega} & \mbox{ if } \lambda \neq \omega , \\[10pt]
    t\, \rme^{-\rmi \omega \,t} & \mbox{ if } \lambda =\omega.
\end{array}
\right.
\end{equation*}
The generalized eigenfunction expansion of $\bU(t)$ then takes the form
$$
\bU(t) = \sum_{\scZ \in \calZ} \sum_{j \in J_{\scZ}} \int_{\Lambda_{\scZ} } \phi_{\omega}(\lambda,t)\,\langle  \bG_\omega, \bbW_{k,\lambda,j} \rangle_{\indHxy,s} \ \bbW_{k,\lambda,j} \,\rmd\lambda \,\rmd k.
$$
In the second part \cite{Cas-Haz-Jol-Vin} of the present paper, we will study the asymptotic behavior of this quantity for large time, in particular the validity of the \emph{limiting amplitude principle}, that is, the fact that $\bU(t)$ becomes asymptotically time-harmonic. We can already predict that this principle fails in the particular case $\Omegae = \Omegam$ if we choose $\omega = \pm\Omegam/\sqrt{2},$ since $\pm\Omegam/\sqrt{2}$ are eigenvalues of infinite multiplicity of $\bbA$ (see Corollary \ref{c.spectrumA}). Indeed, if $\bG_\omega$ is an eigenfunction associated to $\pm\Omegam/\sqrt{2},$ it is readily seen that the above expression becomes
$$
\bU(t) = \int_{|k|>k_{\rm c} } \phi_{\omega}(\omega,t)\,\langle  \bG_\omega, \bbW_{k,\omega,0} \rangle_{\indHxy}\, \bbW_{ k,\omega,0} \, \rmd k.
$$
Using the expression of $\phi_{\omega}(\omega,t),$ we obtain
$$
\bU(t) = t\, \rme^{-\rmi \omega \,t} \ \int_{|k|>k_{\rm c} } \langle  \bG_\omega, \bbW_{k,\omega,0} \rangle_{\indHxy}\, \bbW_{ k,\omega,0} \, \rmd k.
$$
which shows that the amplitude of $\bU(t)$ increases linearly in time. This \emph{resonance} phenomenon is compatible with formula (\ref{eq.incrlint}) which tells us that $\|\bU(t)\|_{\indHxy}$ increases at most linearly in time. It is similar to the resonances which can be observed in a bounded electromagnetic cavity filled with a non-dissipative dielectric medium, when the frequency of the excitation coincides with one of the eigenfrequencies of the cavity. But it is related here to \emph{surface plasmon polaritons} which are the waves that propagate at the interface of our Drude material and that are described by the eigenfunctions associated with the eigenvalues $\pm\Omegam/\sqrt{2}$ of $\bbA$. Moreover, unlike the eigenfrequencies of the cavity, these eigenvalues are of infinite multiplicity and embedded in the continuous spectrum. In the case of an unbounded stratified medium composed of standard non-dissipative dielectric materials such nonzero eigenvalues do not exist (see \cite{Wed-91}).

In \cite{Cas-Haz-Jol-Vin}, we will explore more deeply this phenomenon and we will investigate all the possible behaviors of our system submitted to a periodic excitation. This forthcoming paper is devoted to both \emph{limiting absorption} and \emph{limiting amplitude} principles, which yield two different but concurring processes that characterize time-harmonic waves. In the limiting absorption principle, the time-harmonic regime is associated to the existence of one-sided limits of the resolvent of the Hamiltonian on its continuous spectrum, whereas in the limiting amplitude principle, it appears as an asymptotic behavior for large time of the time-dependent regime.

%XXXXXXXXXXXXXXXXXXXXXXXXXXXXXXXXXXXXXXXXXXXXXXXXXXXXXXXXXXXXXXXXXXXXXXXXXXXXXXXXXXXXXXXXXXXXXXXXXXXXXXXXXXXXXXXXXXXXXXXXXXXXXXX
%XXXXXXXXXXXXXXXXXXXXXXXXXXXXXXXXXXXXXXXXXXXXXXXXXXXXXXXXXXXXXXXXXXXXXXXXXXXXXXXXXXXXXXXXXXXXXXXXXXXXXXXXXXXXXXXXXXXXXXXXXXXXXXX

\end{document}